\theoremstyle{plain}
\newtheorem{theorem}{Theorem}[section]
\newtheorem{lemma}[theorem]{Lemma}
\newtheorem{proposition}[theorem]{Proposition}
\theoremstyle{definition}
\newtheorem{definition}[theorem]{Definition}
\newtheorem{example}[theorem]{Example}
\newtheorem{remark}[theorem]{Remark}
\title[Link-homotopy classes of 4-component links and claspers]{Homotopy classes of 4-component links and claspers}
\author[Kotorii]{Yuka Kotorii}
\address[Kotorii]{Mathematical Analysis Team, RIKEN Center for Advanced Intelligence Project (AIP), 1-4-1 Nihonbashi Chuo-ku Tokyo 103-0027 Japan}
\address{Department of Mathematics, Graduate School of Science, Osaka University, 1-1 Machikaneyama Toyonaka Osaka 560-0043 Japan}
\address{interdisciplinary Theoretical \& Mathematical Sciences Program (iTHEMS) RIKEN, 2-1 Hirosawa Wako Saitama 351-0198 Japan}
\email[Yuka Kotorii]{yuka.kotorii@riken.jp}
\author[Mizusawa]{Atsuhiko Mizusawa}
\address[Mizusawa]{Department of Mathematics, Fundamental Science and Engineering, Waseda University, 3-4-1Okubo, Shinjuku-ku, Tokyo 169-8555, Japan} 
\email[Atsuhiko Mizusawa]{a\_mizusawa@aoni.waseda.jp}
\keywords{link-homotopy, 4-component link, clasper, Milnor's $\overline{\mu}$-invariant, algorithm}
\date{\today}
\begin{document}

\begin{abstract}
Two links are link-homotopic if they are transformed into each other by a sequence of self-crossing changes and ambient isotopies. The link-homotopy classes of 4-component links were classified by Levine with enormous algebraic computations. We modify the results by using Habiro's clasper theory. The new classification gives more symmetrical and schematic points of view to the link-homotopy classes of 4-component links. As applications, we give several new subsets of the link-homotopy classes of 4-component links which are classified by comparable invariants and give an algorithm which determines whether given two links are link-homotopic or not. 
\end{abstract}

\subjclass[2010]{57M27, 57M25}

\maketitle
\section{Introduction} \label{int}
\par
In this paper, we work in the piecewise linear category and links are ordered and oriented. 
Two links are \textit{link-homotopic} if one is transformed into the other by a sequence of self-crossing changes (i.e. crossing changes between arcs belonging to the same component) and ambient isotopies. The notion of link-homotopy was introduced by Milnor in \cite{Mil01}. In the paper, he defined link-homotopy invariants and gave complete classifications of the link-homotopy classes of 2- and 3-component links. In \cite{Mil02}, Milnor defined finer link-homotopy invariants, which we call \textit{Milnor homotopy invariants}. The Milnor homotopy invariants are equivalent to the invariants in \cite{Mil01} up to 4-component links. (It was shown in \cite{Wa} that there is a pair of 5-component links which are distinguished by the Milnor homotopy invariants but are not by the invariants in \cite{Mil01}.) So link-homotopy classes of 2- and 3-component links are classified by the Milnor homotopy invariants. 
\par
Let $L$ be an $n$-component link.
For a non-repeat sequence $I$ of integers in the set of the component numbers, the Milnor homotopy invariant $\overline{\mu}_L(I)$ for $L$ is defined in the cyclic group $\mathbb{Z}_{\Delta_L (I)}$, 
where $\Delta_L(I)$ is the greatest common divisor of all Milnor homotopy invariants $\overline{\mu}_L(J)$ such that $J$ is obtained from $I$ by removing at least one integer and permuting the remaining ones cyclicly. 
See \cite{Mil02} for details. Note that for the sequence $I=ij$ with length 2, the Milnor homotopy invariant $\overline{\mu}_L(ij)$ is equivalent to the linking number between the $i$-th and $j$-th components. 
The link-homotopy classes of 2-component links $L=K_1\cup K_2$ are classified by the Milnor homotopy invariants $\overline{\mu}_L(12)$ with length 2. The link-homotopy classes of 3-component links $L=K_1\cup K_2 \cup K_3$ are classified by 
the 4 Milnor homotopy invariants $\overline{\mu}_L(12)$, $\overline{\mu}_L(23)$, $\overline{\mu}_L(13)$ and $\overline{\mu}_L(123)$ with length up to 3.\\
\par
Let $\gcd^{\ast} (x_1,\dots,x_n)$ (resp. $\gcd_{\ast} (x_1,\dots,x_n)$) be the greatest common divisor $\gcd (x_1,\dots,$ $ x_n)$ of the integers $x_1, \dots, x_n$ if the integers are not all zeros and $\infty$ (resp. 0) otherwise.
For 4-component links, the link-homotopy classes were classified by Levine \cite{L} as in Theorem \ref{Levine}. 
Let $\mathcal{L}_4$ be the set of link-homotopy classes of 4-component links.

\begin{theorem}[\cite{L}] \label{Levine}
Let $M$ be a set of 12-tuples of integers: $k$, $l$, $r$, $d$ $(0\leq d < \gcd^{\ast} (k,l,r))$ and $e_i$ $(1\leq i \leq 8)$, modulo the relations in Table \ref{L-table}, where the integers $a$, $b$ and $c$ run over all integers satisfying $ak-br+cl=0$. In the table, the relations $\Phi_{j}$ add the entries to $e_i$ $(4 \leq i\leq 8)$. The other integers do not change under the relations. Then there is a bijection between $\mathcal{L}_4$ and $M$.
\end{theorem}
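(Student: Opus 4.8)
The plan is to trade Levine's algebraic bookkeeping for Habiro's clasper calculus: represent every link-homotopy class by a canonical union of tree claspers on the trivial link, extract the $12$-tuple from that picture, and match the clasper moves relating two pictures of link-homotopic links with the relations in Table~\ref{L-table}.

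\emph{Canonical form.} First I would invoke the standard clasper facts that, up to link-homotopy, any $4$-component link is surgery on the trivial link along a disjoint union of \emph{simple tree claspers} whose leaves grasp pairwise distinct components, and that trees of degree $\ge 4$, or trees with two leaves on one component, contribute nothing. Only degrees $1,2,3$ survive: degree $1$ (two leaves on a pair) gives the six linking numbers, which I would name $k,l,r$ for the pairs inside $\{1,2,3\}$ and $e_1,e_2,e_3$ for the pairs meeting the fourth component; degree $2$ (three leaves on a triple) gives four triple-Milnor integers, $d$ for $\{1,2,3\}$ and $e_4,e_5,e_6$ for the triples meeting $4$; and degree $3$ (one leaf on each component) gives, after fixing a basis modulo the IHX (Jacobi) and antisymmetry relations, two more integers $e_7,e_8$. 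Thus a class determines a $12$-tuple, and is determined by it up to clasper moves that only rearrange lower-degree trees.

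\emph{Relations.} The content is then to pin down exactly those moves. Sliding one leaf of a degree-$m$ tree past a leaf of another clasper, or across a crossing, replaces that tree by itself plus a degree-$(m+1)$ tree; running a leaf once around its component therefore leaves the degree-$1$ data fixed while shifting the degree-$2$ and degree-$3$ data by amounts read from the linking numbers. This is precisely the mechanism by which the triple invariant on $\{1,2,3\}$ is well-defined only modulo $\gcd^{\ast}(k,l,r)$, forcing $0\le d<\gcd^{\ast}(k,l,r)$, and by which one obtains the moves $\Phi_j$ adding to $e_4,\dots,e_8$ the combinations indexed by integers $a,b,c$ with $ak-br+cl=0$, the linear constraint being exactly the requirement that the degree-$1$ part return to itself after the slides. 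One must also account for the fact that a link is the closure of a string link and that two string-link presentations whose closures are link-homotopic differ by the Habegger--Lin basing action (conjugating strands by meridian words); expanded in clasper language this action again produces only leaf-slide moves of the above kind, so it contributes no further relations beyond the $\Phi_j$ and the range of $d$. I would verify by direct clasper manipulation that the resulting group of moves on $12$-tuples is exactly the one presented by the table.

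\emph{Bijection.} Finally, the map class $\mapsto$ $12$-tuple lands in $M$ by the two preceding steps; it is onto because each basis tree clasper is realised by an explicit diagram and disjoint surgeries superpose additively; and it is one-to-one because two canonical forms with equal image in $M$ are, again by the preceding steps, related by a sequence of link-homotopies (using also that the Milnor homotopy invariants form a complete link-homotopy invariant of string links). The step I expect to be hardest is the relations step, and within it the explicit bookkeeping of how leaf slides and the basing action act on the degree-$3$ trees modulo IHX: one must show that the many commutator-type corrections generated there close up to \emph{exactly} the tabulated $\Phi_j$ and the stated range for $d$, with nothing over and nothing missing. Agreement with \cite{L} then serves as a final consistency check.
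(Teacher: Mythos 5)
There is a genuine gap, and it sits exactly where you say you expect the hardest work to be. First, note that the paper does not prove Theorem~\ref{Levine} at all: it is quoted from Levine~\cite{L}, whose argument is algebraic --- the fourth component is represented by a word $\alpha=x^{e_1}y^{e_2}z^{e_3}[x,y]^{e_4}\cdots$ in the reduced group $\mathcal{G}(L_3)$, and the relations $\Phi_1,\Phi_2$ come from relations in $\mathcal{G}(L_3)$ while $\Phi_3,\dots,\Phi_6$ come from determining which automorphisms of $\mathcal{G}(L_3)$ are \emph{geometric}. Your plan replaces this with clasper calculus, which is a legitimately different route, but the decisive step --- that the moves generated by leaf slides, the IHX relation, and the closure/basing action are \emph{exactly} the group presented by Table~\ref{L-table}, ``with nothing over and nothing missing'' --- is only promised (``I would verify by direct clasper manipulation''), not carried out. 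That completeness claim is the entire content of the theorem: the ``nothing missing'' half requires showing every link-homotopy between standard forms factors through the listed moves, which in Levine's framework is the hard determination of the geometric automorphism group of $\mathcal{G}(L_3)$, and in the Habegger--Lin framework is the computation of the orbits of the partial-conjugation action on string-link classes. Your appeal to Habegger--Lin for injectivity is close to circular here: the Milnor invariants classify \emph{string links}, and the passage to links is precisely the orbit computation that Table~\ref{L-table} encodes, so asserting that the basing action ``contributes no further relations beyond the $\Phi_j$'' assumes what must be proved.

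It is worth observing that even the present paper does not attempt this via claspers: in the proof of Theorem~\ref{mainthm} the injectivity of $\mathcal{F}$ is deduced by the sentence ``From Theorem~\ref{Levine}, $T=T'$ in $M$,'' i.e.\ the clasper picture is calibrated against Levine's algebraic result rather than used to reprove it. So your outline, if completed, would be a genuinely independent (and arguably more geometric) proof of Levine's classification, and your identification of the canonical form, the degree count (six degree-$1$ trees, four degree-$2$ trees, two degree-$3$ trees modulo IHX and antisymmetry), and the mechanism producing the constraint $ak-br+cl=0$ are all correct in outline. But as written the proposal establishes at most that the tabulated relations are \emph{necessary} (each $\Phi_j$ is realized by some clasper move); it does not establish that they are \emph{sufficient}, and without that the bijection with $\mathcal{L}_4$ does not follow.
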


\begin{table}[htb] 
\begin{center}
   \caption{Levine's table of relations.} \label{L-table}
  \begin{tabular}{|c|c|c|c|c|c|} \hline
      & $e_4$ & $e_5$ & $e_6$ & $e_7$ & $e_8$ \\ \hline 
    $\Phi_{1}$ & $k$ & $r$ & 0 & $d$ & $-d$ \\ \hline 
    $\Phi_{2}$ & $-k$ & 0 & $l$ & $-d$ & $0$ \\ \hline 
    $\Phi_{3}$ & $-e_1$ & 0 & $e_3$ & $e_5$ & $0$ \\ \hline 
    $\Phi_{4}$ & $e_2$ & $e_3$ & $0$ & $e_6$ & $-e_6$ \\ \hline 
    $\Phi_{5}$ & $0$ & $-e_1$ & $-e_2$ & $0$ & $e_4$ \\ \hline 
    $\Phi_{6}(a,b,c)$ & $ce_2-be_1$ & $-ae_1$ & $0$ & $ae_4$ & $-abe_1-ce_6+be_5$ \\ \hline 
  \end{tabular} 
\end{center}
\end{table}

\begin{remark} \label{correction-remark}
In Table \ref{L-table}, we correct the sign of the entry $(\Phi_1, e_8)$ in \cite{L}.
\end{remark}

\begin{remark}[\cite{L}] \label{intro-remark}
In Theorem \ref{Levine}, the integers $k$, $l$, $r$, $d$ ($0\leq d < \gcd^{\ast} (k,l,r)$) and $e_i$ $(1\leq i \leq 8)$ have relations to the Milnor homotopy invariants $\overline{\mu}_L$ as follows. 
$$\overline{\mu}_L(12)= k,\, \overline{\mu}_L(23)= l,\, \overline{\mu}_L(13)= r,\, \overline{\mu}_L(14)= e_1,\, \overline{\mu}_L(24)= e_2,\, \overline{\mu}_L(34)= e_3,$$ 
$$\overline{\mu}_L(123)= d,\, \overline{\mu}_L(124)\equiv e_4,\, \overline{\mu}_L(134)\equiv e_5,\, \overline{\mu}_L(234)\equiv e_6,\, \overline{\mu}_L(3124)\equiv e_7,\, \overline{\mu}_L(2134)\equiv e_8,$$
where the integers in the first line are the linking numbers, the first identity in the second line holds by regarding $\overline{\mu}_L(123)$ as an integer in $[0, \gcd^{\ast} (k,l,r) )$, and the other equations in the second line hold in $\mathbb{Z}_{\Delta_L (I)}$ with corresponding sequence $I$.
Here we correct the relations of $e_7$ and $e_8$ in \cite{L} by exchanging them. 
\par
Note that the link-homotopy class of the first 3-component sublink $L_3$ of $L$ is determined by $k$, $l$, $r$ and $d$ (i.e. the Milnor homotopy invariants) and the integers $e_i$ indicate how the fourth component tangles to $L_3$. 
\end{remark}

\par
In \cite{HL}, the link-homotopy classes of string links are classified by using the Milnor homotopy invariants for string links. Furthermore, the link-homotopy classes of links with any number of components are classified as the link-homotopy classes of string links modulo several relations. It was also shown that there is an algorithm which determines whether given two links are link-homotopic or not. However, the link-homotopy classes of links whose numbers of components are more than three are not classified by using comparable invariants.  
\par
For the 4-component link case, Levine  gave several subsets of $\mathcal{L}_4$ which are classified by comparable invariants \cite{L}. We recall the result in Theorem \ref{Levintheorem} below. 
\\

\par
In several cases, the link-homotopy classes are described by using the \textit{claspers} defined in \cite{Ha}. For up to 3-component links, the link-homotopy classes are described by Hopf chords and Borromean chords (i.e. $C_1$- and $C_2$-trees) in \cite{TY}, see also Remarks \ref{3-comp-case} and \ref{3-comp-psi} below. For the link-homotopy classes of string links, the correspondence between the Milnor homotopy invariants and claspers are shown in \cite{Ya}. 
\par
We mention that Nikkuni and the authors also used the clasper theory to study \textit{HL-homotopy classes} for handlebody-links \cite{MN, KM}. \\

\par
In the present paper, we apply the clasper theory to 4-component links and modify Levine's classification result in Theorem \ref{Levine} by introducing new relations as follows.

\begin{theorem} \label{mainthm}
Let $N$ be a set of 12-tuples of integers: $c_1$,\dots, $c_6$, $f_1$, \dots, $f_4$, $t_1$ and $t_2$, modulo the relations $\psi_{ij}$ in Table \ref{originaltable}. Then there is a bijection between $\mathcal{L}_4$ and $N$.
\end{theorem}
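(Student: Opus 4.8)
The plan is to route through Habiro's clasper calculus to a normal form for $4$-component links up to link-homotopy, and then to match the resulting parametrization against Levine's list from Theorem~\ref{Levine}.

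\emph{A clasper normal form.} First I would show, using clasper surgery \cite{Ha} together with the fact that we work modulo link-homotopy, that every $4$-component link $L$ is link-homotopic to the result of surgery on the trivial $4$-component link along a disjoint union of claspers of fixed normal shape: $C_1$-trees (Hopf chords) $H_{ij}$ for each pair $1\le i<j\le 4$, $C_2$-trees (Borromean chords) $Y_{ijk}$ whose three leaves meet the distinct components $i,j,k$ ($1\le i<j<k\le 4$), and $C_3$-trees whose four leaves meet the four distinct components. Any clasper with a repeated index, any self-clasper, and any $C_m$-tree with $m\ge 4$ on a $4$-component link (which necessarily repeats an index) is trivial up to link-homotopy, so these are the only surviving trees. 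Fixing a normal shape for each type, and for $C_3$-trees a basis of the two-dimensional $IHX$-reduced space of tree diagrams on four distinctly coloured legs, and recording the multiplicities $c_1,\dots,c_6$ of the $H_{ij}$, $f_1,\dots,f_4$ of the $Y_{ijk}$, and $t_1,t_2$ of the $C_3$-part, assigns to $L$ a $12$-tuple and exhibits the induced map from $12$-tuples onto $\mathcal{L}_4$ as a surjection.

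\emph{The relations $\psi_{ij}$ are link-homotopy moves.} The normal form above is not unique: two normal forms representing link-homotopic links differ by a finite sequence of elementary clasper moves. I would enumerate the moves that can change the $12$-tuple --- sliding a leaf of a $Y$-tree across a Hopf chord (an $STU$-type move producing a $C_3$-tree), sliding a leaf of a $C_3$-tree across a Hopf chord or a $Y$-tree (producing $C_{\ge 4}$-trees, hence link-homotopically trivial contributions), the $IHX$ relation among $C_3$-trees, the zip construction, and basing/conjugation moves --- and read off the induced transformation of $(c_i,f_j,t_k)$ in each case. Verifying that these transformations are exactly the relations $\psi_{ij}$ of Table~\ref{originaltable} makes the assignment of the previous step descend to a well-defined surjection $N \twoheadrightarrow \mathcal{L}_4$.

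\emph{Injectivity via Levine's list.} To see that $N \to \mathcal{L}_4$ is injective it suffices to build a bijection $N \to M$, where $M$ is Levine's parameter set from Theorem~\ref{Levine}. The degree of a tree clasper is one less than the length of the Milnor homotopy invariant it feeds, and the dictionary between clasper multiplicities and Milnor invariants, of the kind worked out for string links in \cite{Ya} and for links of few components in \cite{TY}, expresses $k,l,r,e_1,e_2,e_3$ through the six linking numbers $c_i$, expresses $d,e_4,e_5,e_6$ through the $f_j$ with lower-degree corrections in the $c_i$, and expresses $e_7,e_8$ through $t_1,t_2$ with corrections in the $c_i,f_j$. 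I would write this substitution explicitly, check entrywise that it carries each relation $\psi_{ij}$ into Levine's relations $\Phi_j$ of Table~\ref{L-table} and that it is invertible over the relevant cyclic quotients, and then conclude by Theorem~\ref{Levine}. The main obstacle is precisely this last matching: the length-$4$ invariants $e_4,\dots,e_8$ take values in cyclic groups $\mathbb{Z}_{\Delta_L(I)}$ whose moduli are gcd's of the lower invariants --- equivalently, of the $c_i$ --- which is exactly why Levine's relations carry the free parameters $a,b,c$ with $ak-br+cl=0$; reproducing this indeterminacy on the clasper side forces a careful accounting of which leaf-slides on $C_3$-trees produce trivial $C_{\ge 4}$-trees. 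A secondary, purely bookkeeping difficulty is fixing canonical normal shapes so that the dictionary is unambiguous and the relations emerge in the symmetric form displayed in Table~\ref{originaltable}.
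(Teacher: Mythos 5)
Your proposal follows essentially the same route as the paper: a clasper standard form giving a surjection from $12$-tuples onto $\mathcal{L}_4$, the moves $\psi_{ij}$ realized by link-homotopies (leaf slides across Hopf chords producing higher trees) so that the map descends to $N$, and injectivity obtained by an explicit dictionary between $N$ and Levine's set $M$ that carries each $\psi_{ij}$ into the relations $\Phi_j$ and then invokes Theorem~\ref{Levine}. The only caveat is that the completeness claim in your second step --- that the enumerated elementary moves are \emph{exactly} the relations $\psi_{ij}$ --- is neither needed for well-definedness nor easy to prove directly; the paper, like your third step, lets Levine's classification carry that burden.
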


\begin{table}[htb] 
\begin{center}
   \caption{The table of relations in Theorem \ref{mainthm}.} \label{originaltable}
  \begin{tabular}{|c|c|c|c|c|c|c|} \hline
      & $f_1$ & $f_2$ & $f_3$ & $f_4$ & $t_1$ & $t_2$ \\ \hline 
    $\psi_{21}$ & 0 & 0 & $c_5$ & $-c_1$ & $f_1$ & $0$ \\ \hline 
    $\psi_{41}$ & 0 & $c_6$ & $-c_5$ & $0$ & $0$ & $-f_1$ \\ \hline 
    $\psi_{12}$ & 0 & 0 & $-c_4$ & $c_2$ & $f_2$ & $0$ \\ \hline 
    $\psi_{32}$ & $c_6$ & 0 & $0$ & $-c_2$ & $0$ & $-f_2$ \\ \hline 
    $\psi_{43}$ & $c_5$ & $-c_4$ & $0$ & $0$ & $f_3$ & $0$ \\ \hline 
    $\psi_{23}$ & $-c_5$ & 0 & $0$ & $c_3$ & $0$ & $-f_3$ \\ \hline 
    $\psi_{34}$ & $-c_1$ & $c_2$ & $0$ & $0$ & $f_4$ & $0$ \\ \hline 
    $\psi_{14}$ & 0 & $-c_2$ & $c_3$ & $0$ & $0$ & $-f_4$ \\ \hline 
  \end{tabular} \\
\end{center}
\end{table}

\begin{remark} \label{intro-remark2}
The two tables in Theorem \ref{Levine} and \ref{mainthm} are related as follows.
The relations between the integers $k, l, r, d, e_1, \dots, e_7$ and $e_8$ in Theorem \ref{Levine} and the integers $c_1, \dots, c_6, f_1, \dots, f_4$, $t_1$ and $t_2$ in Theorem \ref{mainthm} are 
$$k=-c_3,\, l=-c_1,\, r=-c_2,\, e_1=-c_4,\, e_2=-c_5,\, e_3=-c_6,$$ 
$$d=f_4,\, e_4=-f_3,\, e_5=f_2,\, e_6=-f_1,\, e_7=-t_2\ \mbox{ and } e_8=t_1+t_2.$$
Note that if $f_4$ does not satisfy $0\leq f_4< \gcd^{\ast} (c_1, c_2, c_3)$, we need to make $f_4$ in the range by using relations $\psi_{ij}$ in Table \ref{originaltable} (accordingly $f_1, \dots, f_3, t_1, t_2$ also change), to see the relations. 
The relations $\Phi_i$ are described by using relations $\psi_{ij}$: 
$$\Phi_1=\psi_{14},\, \Phi_2=(\psi_{14}\psi_{34})^{-1},\, \Phi_3=\psi_{12}\psi_{32},\,\Phi_4=\psi_{41}^{-1},$$
$$\Phi_5=\psi_{43}^{-1}\,\mbox{ and }\Phi_6(a,b,c)=\psi_{21}^c\psi_{12}^b\{(\psi_{23}\psi_{43})^{-1}\}^a.$$
\end{remark}

\par
The proof of Theorem \ref{mainthm} is done by using clasper presentations of 4-component link-homotopy classes, see Section \ref{main}. These clasper presentations give more symmetrical and schematic points of view to the link-homotopy classes of 4-component links. We also give new subsets of $\mathcal{L}_4$ which are classified by comparable invariants other than Levine's ones in \cite{L}. Moreover, we also give an algorithm which determines whether given two 4-component links are link-homotopic or not by translating the algorithm in \cite{HL} into our notation. Examples are given. 


\par
The present paper is organized as follows. In Section \ref{clasp}, we review the clasper theory quickly and give a standard form of 4-component links up to link-homotopy by using the claspers. In Section \ref{main}, we prove Theorem \ref{mainthm}. We give new subsets of $\mathcal{L}_4$ which are classified by comparable invariants and also give the algorithm which determines whether given two 4-component links are link-homotopic or not in Section \ref{classif}. 

\section*{Acknowledgement}
The authors thank Professor Jean-Baptiste Meilhan for valuable comments and suggestions. The first author is supported by RIKEN AIP (Center for Advanced Intelligence Project) and RIKEN iTHEMS Program. The first author is partially supported by Grant-in-Aid for Young Scientists (B) (No. 16K17586), Japan Society for the Promotion of Science.

\section{Claspers and standard forms} \label{clasp}
\par
In this section, we define a $C_k$-tree which is a special version of a clasper and transform 4-component links into standard forms up to link-homotopy. The standard forms consist of trivial links and $C_1$-, $C_2$- and $C_3$-trees. The standard form is not unique, but it will be shown that the form is unique up to the relations $\psi_{ij}$ in Table \ref{originaltable}. 

\begin{definition}\label{clasper}(\cite{Ha})
A disk $T$ embedded in $S^3$ is called a \textit{simple tree clasper} 
for a link $L$ if it satisfies the following three conditions:
\begin{enumerate} 
\item The embedded disk $T$ is decomposed into bands and disks, where each band connects two distinct disks and each disk attaches either 1 or 3 bands.
We call a band an \textit{edge} and a disk attached 1 band a \textit{leaf}. 
\item The embedded disk $T$ intersects the link $L$ transversely so that the intersections are contained in the interiors of the leaves. 
\item Each leaf intersects $L$ at exactly one point.
\end{enumerate}
In this paper, we call a simple tree clasper with $k+1$ leaves a \textit{$C_k$-tree}. In figures, we express disks and bands as follows,
$$\raisebox{-20 pt}{\begin{overpic}[width=40
pt]{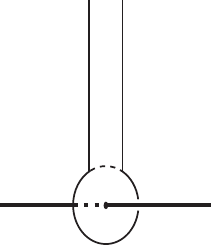}
\put(-3,11){\footnotesize leaf}
\end{overpic}}
  \hspace{0.2cm} \mbox{\large$\longrightarrow$}
\hspace{0.2cm}
\raisebox{-20 pt}{\begin{overpic}[width=40
pt]{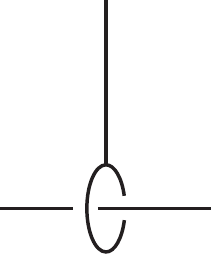}
\end{overpic}}\,,
\hspace{1.0cm}
\raisebox{-15 pt}{\begin{overpic}[width=50
pt]{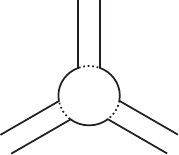}
\end{overpic}}
  \hspace{0.2cm} \mbox{\large$\longrightarrow$}
\hspace{0.2cm}
\raisebox{-15 pt}{\begin{overpic}[width=50
pt]{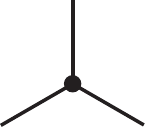}
\end{overpic}}\,.
$$

\par
Given a $C_k$-tree $T$ for a link $L$, there exists a procedure to construct a
framed link in a regular neighborhood $N(T)$ of $T$. 
We call a surgery along the framed link a \textit{surgery along} $T$. 
The surgery gives an orientation-preserving homeomorphism which fixes the boundary, 
from $N(T)$ to the manifold obtained from $N(T)$ by the surgery along $T$. 
We can regard the surgery along $T$ as a local move on $L$, which we call \textit{$C_k$-move} if $T$ is a $C_k$-tree. A $C_k$-move is regarded as a band sum of a Brunnian link.
An example of a $C_k$-move is showed in Figure~\ref{Ck-tree}. 
We denote by $L_T$ the link obtained from $L$ by surgery along $T$. For a family $\mathbb{T}$ of simple tree claspers for $L$, we identify $L\cup \mathbb{T}$ with $L_{\mathbb{T}}$. 

\begin{figure}[ht]
$$\raisebox{-15 pt}{\begin{overpic}[bb=0 0 217 73, width=160
pt]{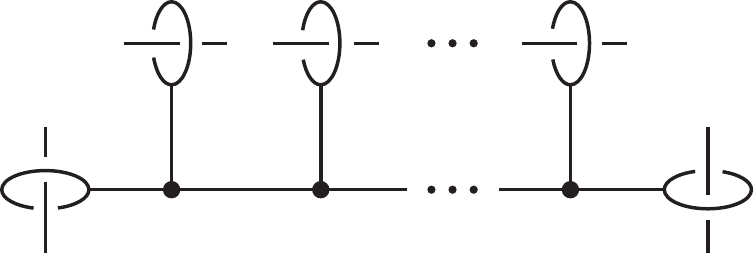}
\end{overpic}}
  \hspace{0.5cm} \mbox{\large$\xrightarrow[]{\rm surgery}$}
\hspace{0.5cm}
\raisebox{-33 pt}{\begin{overpic}[bb=0 0 229 9, width=180
pt]{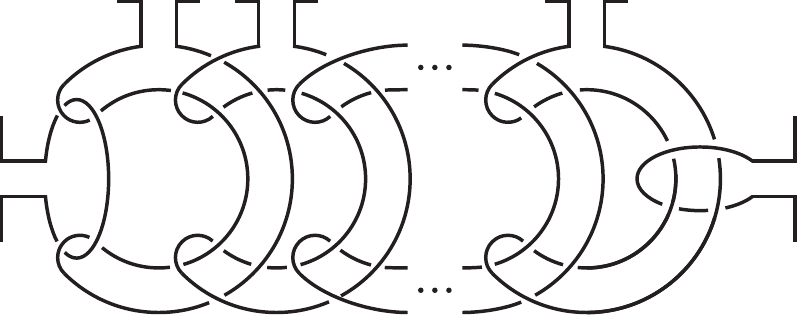}
\end{overpic}}$$
    \caption{An example of a $C_k$-move.}
    \label{Ck-tree}
\end{figure}
\end{definition}

We remark that the surgeries along $C_1$- and $C_2$-trees are taking a band sum of a Hopf link and a Borromean ring respectively, see Figure \ref{C1,C2-tree}. 

\begin{figure}[ht]
$$\raisebox{-43 pt}{\begin{overpic}[width=40
pt]{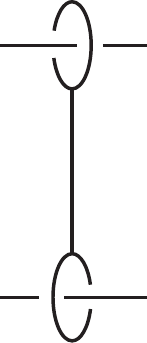}
\end{overpic}}
  \hspace{0.5cm} \mbox{\large$\xrightarrow[]{\rm surgery}$}
\hspace{0.5cm}
\raisebox{-30 pt}{\begin{overpic}[width=40
pt]{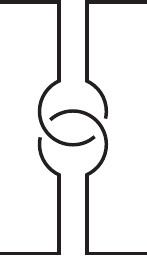}
\end{overpic}}
\hspace{1.5cm}
\raisebox{-24 pt}{\begin{overpic}[width=70
pt]{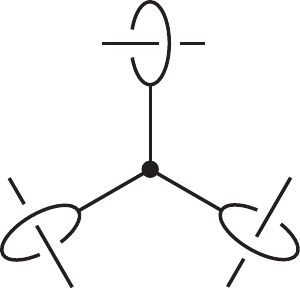}
\end{overpic}}
  \hspace{0.5cm} \mbox{\large$\xrightarrow[]{\rm surgery}$}
\hspace{0.5cm}
\raisebox{-27 pt}{\begin{overpic}[width=70
pt]{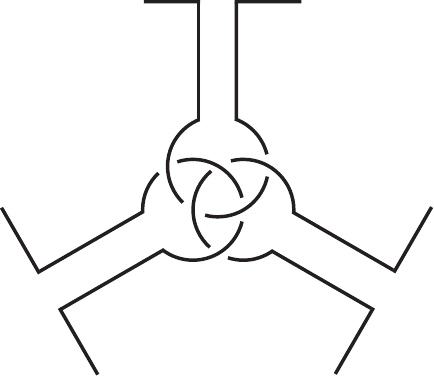}
\end{overpic}}
$$
    \caption{Surgeries along $C_1$- and $C_2$-trees.}
    \label{C1,C2-tree}
\end{figure}

In the diagrams, we express a half-twist of a band as in Figure \ref{half-twist}. 

\begin{figure}[ht]
$$\raisebox{-21 pt}{\begin{overpic}[height=50
pt]{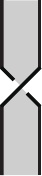}
\end{overpic}}
  \hspace{0.2cm} \mbox{\large$\longrightarrow$}
\hspace{0.2cm}
\raisebox{-21 pt}{\begin{overpic}[height=50
pt]{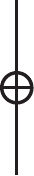}
\end{overpic}}\,
\hspace{1.5cm}
\raisebox{-21 pt}{\begin{overpic}[height=50
pt]{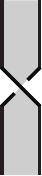}
\end{overpic}}
  \hspace{0.2cm} \mbox{\large$\longrightarrow$}
\hspace{0.2cm}
\raisebox{-21 pt}{\begin{overpic}[height=50
pt]{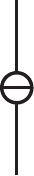}
\end{overpic}}
$$
    \caption{Half-twists on an edge.}
    \label{half-twist}
\end{figure}

We prepare lemmas of moves of simple tree claspers to transform shapes of links.

\begin{lemma}[\cite{Ha}] \label{crossing-change}
The following relation holds up to ambient isotopy. Thus a crossing change is achieved by attaching a $C_1$-tree. 
$$
\raisebox{-20 pt}{\begin{overpic}[width=150pt]{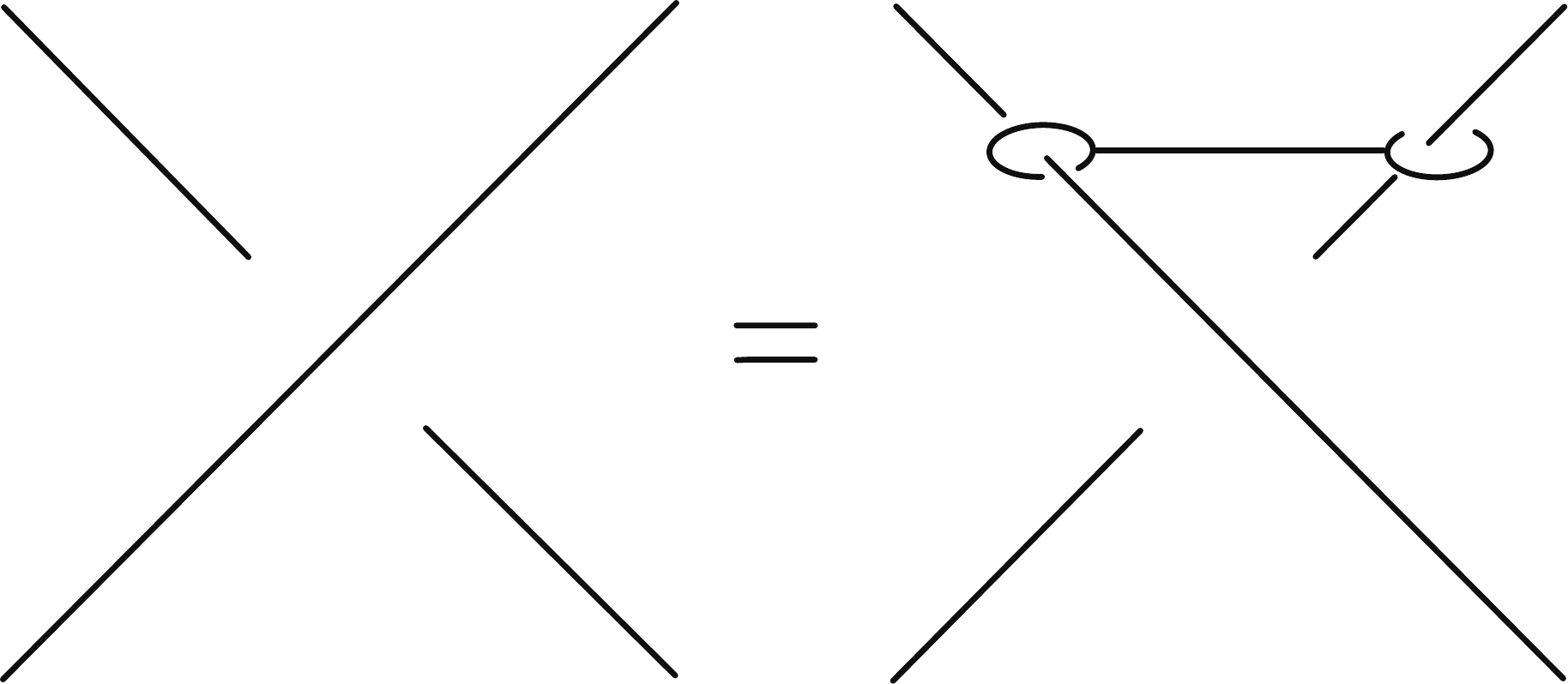}
\end{overpic}}
$$ 
\end{lemma}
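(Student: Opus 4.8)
The plan is to reduce the claim to the description of $C_1$-surgery already recorded above and then finish with an elementary isotopy supported in a ball; this is Habiro's basic observation, and the argument runs as follows. I would first fix a ball $B$ meeting the link in the two strands that cross in the figure, and place the $C_1$-tree $T$ so that its two leaves are meridian disks of these two strands, with its edge joining them through $B$ and carrying the half-twist prescribed in the figure (in the convention of Figure~\ref{half-twist}). By Definition~\ref{clasper}, $L_T$ is the result of surgery along the framed link associated with $T$, and for a $C_1$-tree this surgery is precisely a band sum with a Hopf link, as recorded in the remark following Definition~\ref{clasper} and illustrated in Figure~\ref{C1,C2-tree}.

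Next I would carry this out explicitly inside $B$: after the surgery, the two strands running through the leaves of $T$ have a band-summed Hopf link inserted between them, that is, a clasp, whose sign is governed by the half-twist on the edge. So the surgered link consists, locally, of the original crossing of the two strands together with this inserted clasp. One Reidemeister~II move then cancels one crossing of the clasp against the original crossing, leaving a crossing of the opposite sign. Hence $L_T$ is ambient isotopic to the link obtained from $L$ by changing that one crossing; and since the half-twist realizes either sign of clasp, the change is available in both directions. Composing such moves shows in particular that any crossing change is achieved by attaching a suitable $C_1$-tree.

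The one point that needs care is the sign bookkeeping. From Habiro's clasper-surgery convention together with the half-twist convention of Figure~\ref{half-twist} I must verify that the band-summed Hopf link is a clasp of the sign that \emph{reverses} the prescribed crossing, rather than one that adds two crossings of the same sign, and I must check that orientations are handled correctly when the two strands are anti-parallel. Once these conventions are pinned down the verification is a short local picture computation in $B$; the rest of the argument is formal, using only that $C_1$-surgery is a band sum with a Hopf link and that the entire modification is confined to the ball $B$.
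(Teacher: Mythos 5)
Your argument is correct and is exactly the standard one: the paper itself gives no proof of this lemma (it is quoted from Habiro), but its surrounding remark that surgery along a $C_1$-tree is a band sum with a Hopf link (Figure~\ref{C1,C2-tree}) is precisely the fact you use, and the clasp-plus-Reidemeister~II computation in a ball, with the half-twist convention of Figure~\ref{half-twist} fixing the sign, is how the relation is verified in \cite{Ha}. No gap beyond the sign bookkeeping you already flag, which is indeed a routine local picture check.
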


Since a surgery along $C_k$-trees corresponds to a band sum of a Brunnian link, the following two lemmas hold.

\begin{lemma} \label{vanish}
If two leaves of a $C_k$-tree intersect the same component of a link, the $C_k$-tree vanishes up to link-homotopy. 
\end{lemma}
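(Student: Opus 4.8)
The plan is to use the observation recalled just above, that surgery along a $C_k$-tree $T$ is a band sum of $L$ with a Brunnian link $B=B_0\cup B_1\cup\dots\cup B_k$ of $k+1$ components lying in $N(T)$, where (by Figure \ref{Ck-tree}) $B_j$ is joined to $L$ by a band running along the edge that meets the $(j+1)$-th leaf of $T$. Suppose the leaves meeting $B_0$ and $B_1$ both puncture the same component $K$ of $L$. Then $B_0$, $B_1$ and $K$ lie in a single component of $L_T$, which I call $\widetilde K$, so that any crossing change between $B_0$ and $B_1$, or between either of these and $K$, is a self-crossing change of $\widetilde K$ and is therefore allowed up to link-homotopy; these are the only moves I use.

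Assume first $k\geq 2$, so $B$ has at least three components and every proper sublink of $B$ is an unlink. I would first slide the leaf meeting $B_0$ along $K$ until it becomes a small meridian circle of $K$ adjacent to the leaf meeting $B_1$, which is an ambient isotopy of $L$ carrying the edges of $T$ along. I would then trivialise $B_0$: since $B_0\cup B_2\cup\dots\cup B_k$ is a proper sublink of $B$, hence an unlink, the loop $B_0$ is nullhomotopic in the complement of $B_2\cup\dots\cup B_k$, and since $B_0\cup B_1$ is an unlink as well, $B_0$ can also be freed from $B_1$; the claim, to be justified, is that all of this can be achieved using only self-crossing changes of $\widetilde K$, that is, crossings of $B_0$ with $B_1$, with $K$, or with itself. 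Once $B_0$ bounds a disk disjoint from everything but its one puncture on $K$, the finger $B_0$ together with its attaching band is retracted by an ambient isotopy, leaving the band sum of $L$ with the unlink $B\setminus B_0$, which is ambient isotopic to $L$; hence $L_T$ is link-homotopic to $L$. The base case $k=1$ is immediate from Lemma \ref{crossing-change}, since a $C_1$-tree whose two leaves both puncture $K$ realises a self-crossing change of $K$.

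The main obstacle is exactly the deferred claim: one must verify that contracting $B_0$ never forces a crossing change with a component of $L$ other than $K$, nor with a $B_j$ $(j\geq 2)$ sitting on another component, and similarly that the attaching bands of the remaining $B_j$ cause no obstruction when $B\setminus B_0$ is absorbed. This is where the hypothesis that two leaves lie on the \emph{same} component is used decisively, through the extra freedom of crossing changes with $B_1$; in practice it seems cleanest to induct on $k$, using the leaf-sliding and cancellation moves of \cite{Ha} to reduce the number of leaves of $T$ once the two leaves on $K$ are made adjacent, with Lemma \ref{crossing-change} as the base of the induction.
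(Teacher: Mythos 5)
Your starting point is the same as the paper's: the paper justifies Lemma \ref{vanish} by the single remark that surgery along a $C_k$-tree is a band sum with a Brunnian link, so your sketch is in fact more explicit than the printed argument. As a proof, however, it is incomplete at precisely the place you flag, and the deferred claim is not a technicality --- it is the entire content of the lemma. The Brunnian property gives you only that $B_0\cup B_2\cup\dots\cup B_k$ is a trivial link, i.e.\ that $B_0$ bounds a disk $D$ disjoint from $B_2,\dots,B_k$. Contracting $B_0$ across $D$ is a homotopy, and the (isolated) intersection points of $D$ with $B_1$ and with $K$ are licensed self-crossings of $\widetilde K$; but a $2$-disk and a $1$-submanifold of $S^3$ meet generically in points, so $D$ will in general also meet the strands of the \emph{other} components of $L$ that pass through the leaves of $T$, as well as the bands attaching $B_2,\dots,B_k$ to those components. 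Each such intersection is a forbidden crossing change, and nothing in ``every proper sublink of $B$ is an unlink'' lets you push $D$ off these strands. Already for a $C_2$-tree, the disk bounded by $B_0$ in the complement of $B_2$ must pass through the region occupied by $B_1$ and by the arc of the third component, and one has to argue separately that it can be arranged to miss that arc. So the hypothesis that the two punctured components coincide has not yet been used in any verifiable way beyond the $k=1$ base case (which is correct, by Lemma \ref{crossing-change}).

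The missing geometric input is roughly this: for the specific Brunnian links produced by simple tree claspers (iterated Bing doubles), once the two leaves on $K$ are attached to a common trivalent vertex, $B_0$ and $B_1$ form the innermost clasped pair of a Bing doubling, and a \emph{single} crossing change between them --- a self-crossing change of $\widetilde K$ --- undoes the clasp and splits $B$ into a trivial link by an isotopy supported in $N(T)$, after which the band sum retracts. Reducing to that adjacent configuration requires sliding leaves and applying IHX-type identities, whose correction terms again have two leaves on $K$; note that in this paper those moves (Lemmas \ref{leaf-exchange} and \ref{IHX}) are themselves stated up to link-homotopy \emph{using} Lemma \ref{vanish}, so invoking them here would be circular, and you must instead run the reduction at the level of $C_k$-moves as in \cite{Ha}, or follow the self-contained treatment in \cite{FY}. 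Without some such argument, your proposed ``induction on $k$ using leaf-sliding and cancellation moves'' remains a plan rather than a proof.
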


\begin{lemma} \label{edge-self-change}
A crossing change between edges belonging to a $C_k$-tree is achieved by link-homotopy.
\end{lemma}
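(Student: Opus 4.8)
The plan is to reduce the statement to the fact that surgery along a $C_k$-tree realizes a band sum of a Brunnian link, together with the basic observation that a crossing change between two parallel strands of such a band sum can be absorbed into the link-homotopy moves already available on the ambient link. First I would recall from the discussion preceding the lemma that a $C_k$-move is a band sum of a Brunnian link $B$ (a Hopf link for $k=1$, Borromean rings for $k=2$, and in general an iterated commutator-type Brunnian link). A crossing change between two edges of the $C_k$-tree corresponds, after surgery, to a crossing change between two parallel copies of strands of $B$ that are banded into the same component $K_i$ of $L$. The key point is that these two strands, after the band sum, lie on a single component of $L_T$; hence the crossing change between them is a \emph{self}-crossing change on that component, which is exactly a link-homotopy move.

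More carefully, the steps I would carry out are: (1) isotope the $C_k$-tree so that the crossing to be changed occurs in a small ball $B^3$ meeting the tree in two sub-bands of edges and meeting $L$ in nothing; (2) describe, inside $N(T)$, the framed link along which one surgers, and track what the two edges become after surgery — each edge contributes a pair of parallel strands running along it, and the two strands involved in the crossing are subarcs of the (banded) Brunnian link; (3) observe that since each leaf meets $L$ in exactly one point and each of the two strands is connected through leaves and the bands of the band sum to the \emph{same} component $K_i$ (because the two edges share a node, or more generally because in a tree any two edges are joined by a path, and following that path through leaves leads to components that get identified by the band sum into a single component), the crossing change between the two strands is a self-crossing change on $L_T$; (4) conclude by definition of link-homotopy. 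Alternatively, and perhaps more cleanly, one can argue directly at the level of claspers using Lemma 1.5 (Habiro's crossing-change lemma): a crossing change between two edges can be produced by sliding a small $C_1$-tree whose two leaves both hook the same component, and then Lemma 1.6 kills it up to link-homotopy.

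The main obstacle I anticipate is step (3): making precise the claim that the two edge-strands always end up on the same component of $L_T$. For a $C_1$-tree this is immediate (there is only one edge, and the statement about "two edges of the tree" is vacuous or refers to the two strands of the single banded Hopf component, which lie on one component by construction). For $k\ge 2$ one must use the tree structure: the two edges involved in a self-crossing either are incident to a common trivalent node, in which case after surgery their parallel strands are joined at the corresponding portion of the Brunnian link and hence lie on a common component, or they are far apart in the tree, in which case one first uses isotopy of the clasper to bring the crossing to be changed between two strands that \emph{do} share a node before invoking the local argument. Handling this reduction to the local (shared-node) case, and checking that the isotopies and the framed-link surgery description genuinely identify the relevant strands as belonging to one component of $L_T$, is where the real content lies; everything else is formal once that is in place.
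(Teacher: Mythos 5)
Your reduction fails at the step you yourself flag as the crux, step (3): it is not true that the strands running along the two edges end up on a single component of $L_T$. Surgery along a $C_k$-tree does not merge the link components met by its leaves; if the $k+1$ leaves meet $k+1$ distinct components, those components remain distinct in $L_T$, and the portion of the surgered Brunnian link lying over a given edge consists of strands belonging to several different components. Already for a $C_1$-tree with leaves on $K_1$ and $K_2$ (a case you dismiss as vacuous, although a self-crossing of the single edge is a genuine instance of the lemma), the edge carries after surgery a finger of $K_1$ clasping a finger of $K_2$, so changing a self-crossing of that edge changes crossings between $K_1$-strands and $K_2$-strands; these are not self-crossings, and your argument gives nothing. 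Likewise, two edges meeting at a trivalent vertex carry strands attached to different components, not to ``a common component,'' so the shared-node reduction does not repair the gap.

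The reason the lemma is nevertheless true runs through Lemma \ref{vanish} rather than through self-crossing changes. The strands of a fixed component lying over an edge occur as oppositely oriented parallel pairs (fingers), so the mixed crossing changes amount to pushing a finger of $K_i$ through strands attached to $K_j$; each such finger move alters the link by surgery along a clasper having two leaves on $K_i$, which vanishes up to link-homotopy by Lemma \ref{vanish}. Equivalently, and more in the spirit of the paper's toolkit, one applies Lemma \ref{edge-cross-change} with the two claspers taken equal: a self-crossing change of $T$ differs from the identity by insertion of a tree obtained by joining two copies of $T$, and that tree has two leaves on every component met by $T$, hence dies by Lemma \ref{vanish}. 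Your closing alternative gestures at this reduction but misstates it: the auxiliary $C_1$-tree realizing the edge crossing has its leaves on edges of $T$, not on a single link component, and what Lemma \ref{vanish} kills is the resulting joined tree with repeated leaves, not that auxiliary clasper. The paper itself offers only the one-line assertion that the lemma follows from the band-sum-of-a-Brunnian-link description, but the mechanism behind that assertion is the repeated-leaf vanishing, not a reduction to self-crossing changes of a single component.
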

 
\par
There is a lemma to cancel two claspers. This lemma is originally stated up to $C_k$-moves for an integer $k$ in \cite{Ha}. We rephrase it up to link-homotopy. The proof is obtained from the original proposition by using Lemma \ref{vanish}.

\begin{lemma}[\cite{Ha}] \label{para-cancel}
For two copies of $C_k$-trees, if we add an extra (positive or negative) half-twist to the one of them then they cancel up to link-homotopy. 
$$
\raisebox{-20 pt}{\begin{overpic}[width=130pt]{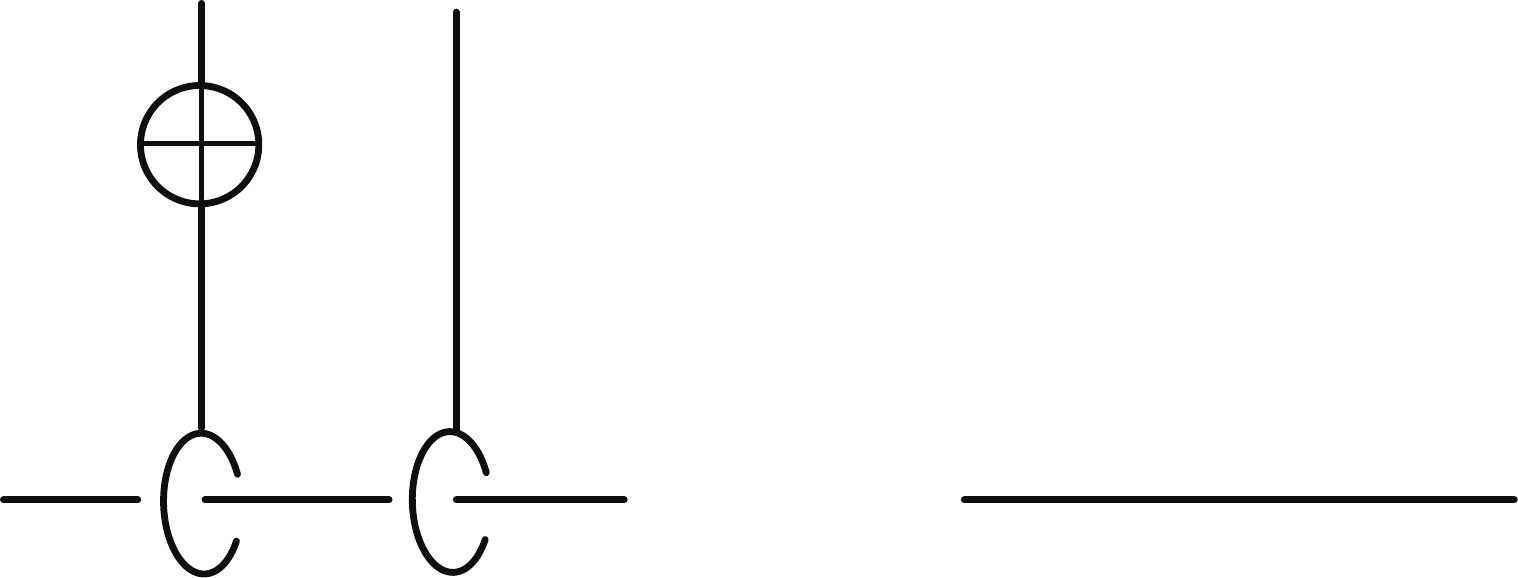}
\put(61,23){\mbox{$\overset{\mbox{\rm l.h.}}{\sim}$}}
\end{overpic}}
$$ 
\end{lemma}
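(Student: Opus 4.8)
The plan is to deduce this from the original clasper identity in \cite{Ha} and then dispose of the resulting error terms by link-homotopy. Write $T$ for the given $C_k$-tree and $T'$ for the second copy, so that $T'$ is a parallel pushoff of $T$ carrying one extra half-twist on an edge. The statement proved in \cite{Ha} is that such a pair of parallel claspers, one with an extra half-twist, can be removed at the cost of surgery along a finite family of tree claspers all of whose degrees are strictly larger than $k$; equivalently, $L_{T\cup T'}$ agrees with $L$ modulo surgery along $C_{k'}$-trees with $k'>k$. I would take this as the input.

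Next I would track where those higher-degree trees come from. They are produced by successively merging leaves of the two parallel copies as one slides $T$ and $T'$ apart. Because $T'$ is a small pushoff of $T$, each leaf of $T'$ meets exactly the same strand of $L$ as the corresponding leaf of $T$; consequently every tree created along the way inherits a leaf of $T$ together with the parallel leaf of $T'$, and these two leaves intersect one and the same component of $L$. By Lemma \ref{vanish}, each such tree then vanishes up to link-homotopy, so $L_{T\cup T'}$ is link-homotopic to $L$, which is the assertion. The same reasoning applies verbatim when the extra half-twist has the opposite sign, since this only reflects the picture and negates the (already trivial) correction terms.

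The main obstacle is the bookkeeping in the middle step: one must verify that throughout the cancellation in \cite{Ha} the only leaf-merges that ever occur are between a leaf of $T$ and its parallel mate in $T'$, so that no higher-degree tree with all of its leaves on distinct components is created. This is exactly the feature that upgrades the original $C_{k+1}$-equivalence to an honest equality up to link-homotopy, and I would check it by carrying the component labels of the leaves through the sequence of moves, using that surgery along a $C_k$-tree is realized by a band sum of a Brunnian link supported in a regular neighborhood of the tree (as noted before Lemma \ref{vanish}).
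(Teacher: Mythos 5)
Your proposal is correct and follows essentially the same route as the paper, which states that the lemma is obtained from Habiro's original cancellation result (valid up to $C_{k+1}$-equivalence, i.e.\ modulo surgery along higher-degree trees) combined with Lemma \ref{vanish} to kill the correction terms. Your additional bookkeeping --- that every higher-degree tree produced in the cancellation inherits a leaf of $T$ together with its parallel mate in $T'$ on the same component --- is exactly the observation that makes the appeal to Lemma \ref{vanish} legitimate.
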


\par
The following three lemmas are proved by using Lemma \ref{para-cancel}.

\begin{lemma} \label{fulltwist-vanish}
A full-twist of an edge of a $C_k$-tree vanishes up to link-homotopy.
$$
\raisebox{-20 pt}{\begin{overpic}[width=100pt]{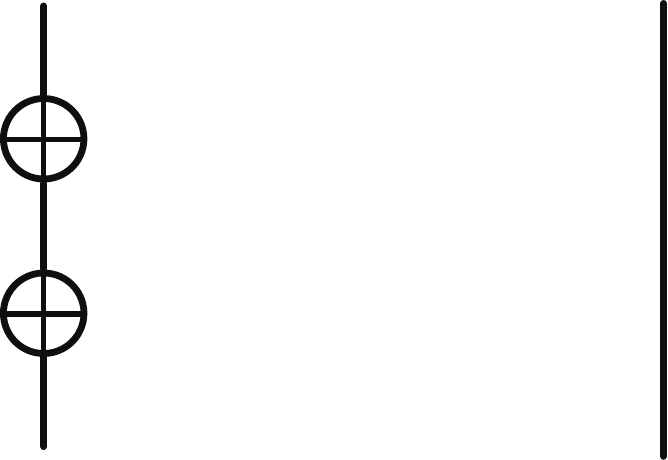}
\put(43,30){\mbox{$\overset{\mbox{\rm l.h.}}{\sim}$}}
\end{overpic}}
$$ 
\end{lemma}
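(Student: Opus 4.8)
The plan is to deduce the statement from the cancellation Lemma~\ref{para-cancel} by inserting an auxiliary pair of parallel copies of the clasper. Let $T$ be a $C_k$-tree for $L$ with a full twist on one edge $e$, and let $T_0$ be the $C_k$-tree obtained from $T$ by removing that full twist; we must show $L_T\overset{\mathrm{l.h.}}{\sim}L_{T_0}$. Since a full twist is the composition of two half-twists of the same sign, $T$ carries two half-twists on $e$ while $T_0$ carries none. First I would take, inside the regular neighborhood $N(T)$, two further mutually disjoint parallel copies $T_1$ and $T_2$ of $T$, disjoint from $T$ and meeting $L$ only at their leaves (so that they are again simple $C_k$-trees for $L$), arranged so that in the parallel bundle running alongside $e$ the three strands appear in the linear order $T_1,T_2,T$. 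Then I modify the twisting of the copy of $e$ in each so that $T_1$ has no half-twist there (hence $T_1$ is ambient isotopic to $T_0$ rel $L$, so $L_{T_1}$ is ambient isotopic to $L_{T_0}$) and $T_2$ has exactly one half-twist, of the same sign as the full twist on $T$.

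Now I apply Lemma~\ref{para-cancel} twice, each time in the presence of the remaining clasper and of $L$. Since $T_1$ and $T_2$ are adjacent parallel copies of a common $C_k$-tree differing by a single half-twist on $e$, that lemma lets us delete (or insert) the pair $T_1\cup T_2$ up to link-homotopy, so $L_T\overset{\mathrm{l.h.}}{\sim}L_{T\cup T_1\cup T_2}$. Next, $T$ and $T_2$ are also adjacent parallel copies of a common $C_k$-tree, and $T$ is obtained from $T_2$ by adding one extra half-twist on $e$ (namely $2=1+1$); hence Lemma~\ref{para-cancel} applies again and allows us to delete the pair $T\cup T_2$, giving $L_{T\cup T_1\cup T_2}\overset{\mathrm{l.h.}}{\sim}L_{T_1}$. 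Chaining the two equivalences and using $L_{T_1}\sim L_{T_0}$ yields $L_T\overset{\mathrm{l.h.}}{\sim}L_{T_0}$, which is the assertion.

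The main point to be careful about — and the only real obstacle — is the bookkeeping of twists together with the choice of the order $T_1,T_2,T$ in the parallel bundle, which is exactly what guarantees that each of the two cancellations is between an \emph{adjacent} pair of genuinely parallel $C_k$-trees differing by one half-twist, as Lemma~\ref{para-cancel} requires. If one did not fix the strands in this order one would have to slide a clasper past another one first, which is not immediately licensed by the link-homotopy versions of the lemmas we have; choosing the order $T_1,T_2,T$ avoids this entirely, and beyond this bookkeeping there is no difficulty.
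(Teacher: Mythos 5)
Your argument is correct and is exactly the route the paper intends: the paper merely remarks that Lemma~\ref{fulltwist-vanish} is proved by using Lemma~\ref{para-cancel}, and your device of inserting an auxiliary canceling pair $T_1\cup T_2$ and then canceling $T\cup T_2$ is the standard way to carry that out. Your care with the order $T_1,T_2,T$ in the parallel bundle is reasonable, though up to link-homotopy any clasper created by sliding one copy past another would have two leaves on the same component and would vanish by Lemma~\ref{vanish} in any case.
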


\begin{lemma} \label{half-twist-move}
For a vertex of a $C_k$-tree, a half-twist of an incident edge can be moved to another incident edge up to link-homotopy.
$$
\raisebox{-20 pt}{\begin{overpic}[width=150pt]{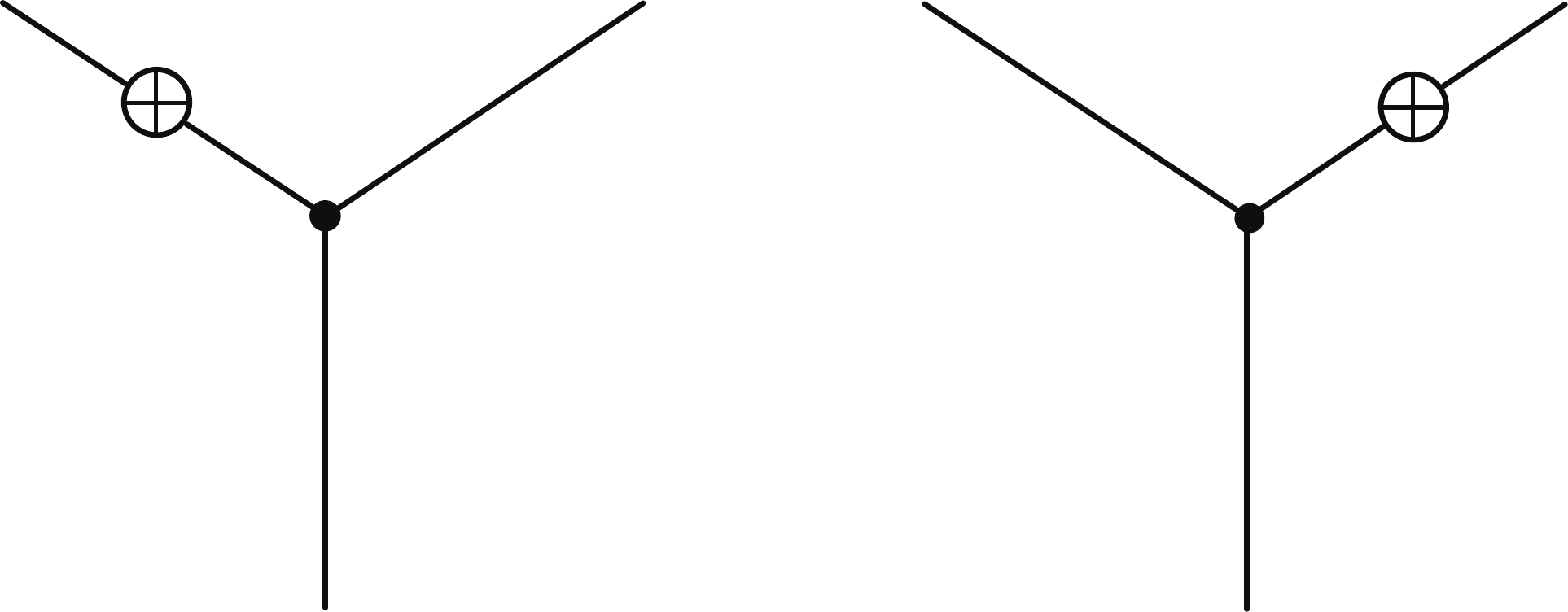}
\put(69,26){\mbox{$\overset{\mbox{\rm l.h.}}{\sim}$}}
\end{overpic}}
$$ 
\end{lemma}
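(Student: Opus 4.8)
The plan is to reduce the statement to Lemma~\ref{para-cancel} by the standard device of inserting a pair of parallel claspers that cancel and then canceling the three resulting claspers in a different order. Fix a link $L$, a $C_k$-tree $T$ for $L$, a trivalent vertex $v$ of $T$, and two edges $e_1,e_2$ incident to $v$. Write $T^{(i)}$ for the $C_k$-tree obtained from $T$ by inserting a single half-twist on $e_i$ in a small disk near $v$; the claim is that $L_{T^{(1)}}$ and $L_{T^{(2)}}$ are link-homotopic. (Since a full twist on an edge is trivial up to link-homotopy by Lemma~\ref{fulltwist-vanish}, the sign of the inserted half-twist is immaterial, so there is no need to keep track of it.)

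First I would take a thin regular neighborhood $N(T)$ and place inside it two further, mutually disjoint, parallel copies of $T$: a copy $T_0$ carrying no half-twist, and a copy $T_2$ carrying a single half-twist near $v$ on the edge parallel to $e_2$. Since $T_0$ and $T_2$ are two parallel copies of a $C_k$-tree that differ by one half-twist, Lemma~\ref{para-cancel} gives $L_{T_0\cup T_2}\overset{\mathrm{l.h.}}{\sim}L$; moreover, as $T_0$ and $T_2$ can be chosen so that a regular neighborhood of $T_0\cup T_2$ is disjoint from $T^{(1)}$, this cancellation is still valid in the presence of $T^{(1)}$, whence $L_{T^{(1)}}\overset{\mathrm{l.h.}}{\sim}L_{T^{(1)}\cup T_0\cup T_2}$.

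Next I would cancel the same three claspers in a different way. Up to ambient isotopy, $T^{(1)}$ is a parallel copy of $T_0$ carrying one extra half-twist (the one on $e_1$), so Lemma~\ref{para-cancel} applies to the pair $\{T^{(1)},T_0\}$ and, since a regular neighborhood of $T^{(1)}\cup T_0$ can be taken disjoint from $T_2$, it gives $L_{T^{(1)}\cup T_0\cup T_2}\overset{\mathrm{l.h.}}{\sim}L_{T_2}$. Combining the two link-homotopies yields $L_{T^{(1)}}\overset{\mathrm{l.h.}}{\sim}L_{T_2}$; and because $T_2$ is ambient isotopic to $T^{(2)}$ (being $T$ with a half-twist inserted near $v$ on the $e_2$-edge), we conclude $L_{T^{(1)}}\overset{\mathrm{l.h.}}{\sim}L_{T^{(2)}}$. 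Running the argument with the auxiliary half-twist placed on the $e_3$-edge instead moves the half-twist to the third incident edge.

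The one point that needs care is arranging $T^{(1)}$, $T_0$, $T_2$ as three pairwise disjoint parallel copies inside $N(T)$ for which both applications of Lemma~\ref{para-cancel} are legitimate — each being a cancellation of two genuinely parallel $C_k$-trees differing by a single half-twist, carried out in a regular neighborhood disjoint from the remaining clasper. This is routine and is precisely the bookkeeping already used in the proof of Lemma~\ref{fulltwist-vanish}, with a displaced half-twist here in the role played there by a full twist.
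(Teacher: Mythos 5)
Your argument is correct and is exactly the route the paper intends: the paper proves this lemma (together with Lemmas \ref{fulltwist-vanish} and \ref{vertex-twist}) by invoking Lemma \ref{para-cancel}, and your double application — inserting a cancelling parallel pair $T_0\cup T_2$ and then re-pairing $T^{(1)}$ with $T_0$ — is the standard "uniqueness of inverses" implementation of that idea. The bookkeeping you flag (ordering the three parallel copies so that each cancellation involves adjacent copies, with higher-order claspers killed by Lemma \ref{vanish} via the remarks after Lemmas \ref{edge-cross-change} and \ref{leaf-exchange}) is handled correctly.
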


\begin{lemma} \label{vertex-twist}
A twist of two incident edges of a trivalent vertex of a $C_k$-tree can be changed to a half-twist of the other incident edge of the vertex up to link-homotopy. 
$$
\raisebox{-20 pt}{\begin{overpic}[width=150pt]{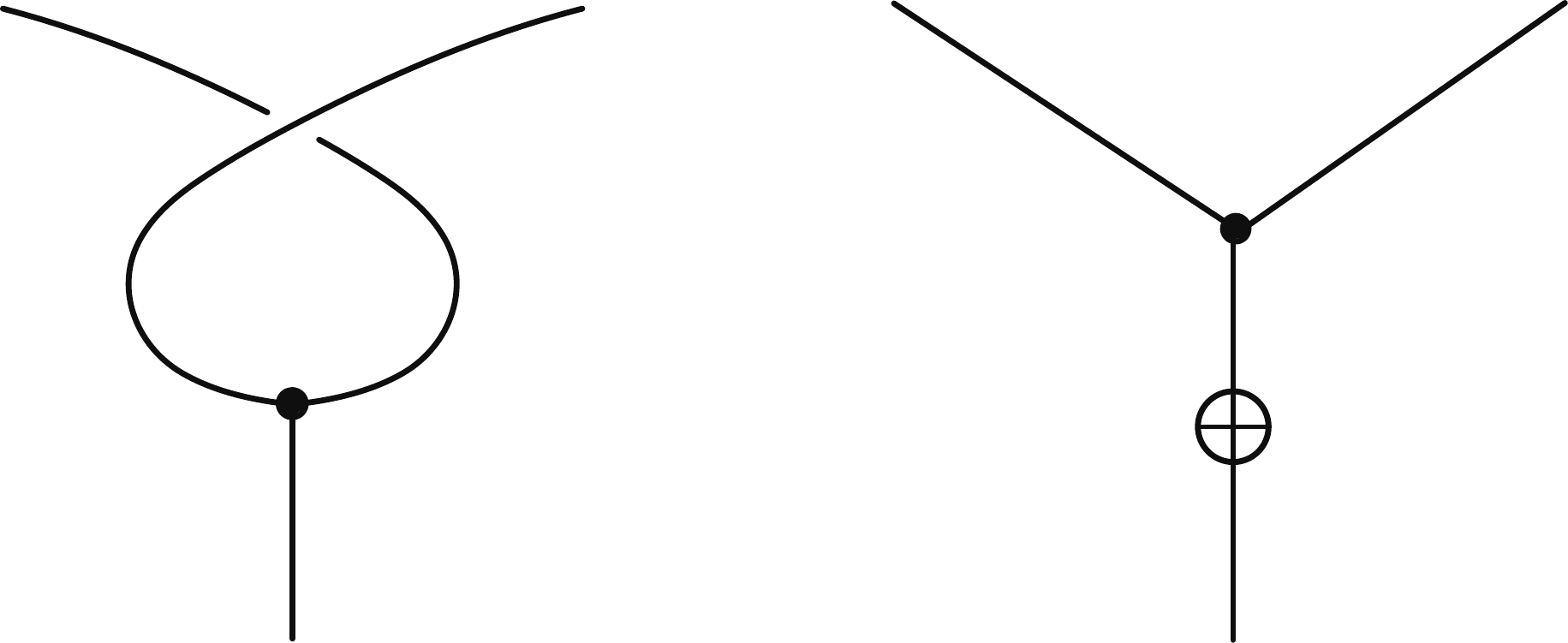}
\put(69,26){\mbox{$\overset{\mbox{\rm l.h.}}{\sim}$}}
\end{overpic}}
$$
\end{lemma}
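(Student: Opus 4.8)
Since the two sides of the asserted equivalence differ only inside a small ball around the trivalent vertex $v$, and the $C_k$-tree meets $L$ only at its leaves, the statement is purely local: it suffices to produce a sequence of moves, each of which is either an ambient isotopy or one of the link-homotopies supplied by Lemmas~\ref{para-cancel}, \ref{fulltwist-vanish} and \ref{half-twist-move}, supported in a neighborhood of $v$. Write $e_1,e_2,e_3$ for the edges incident to $v$, with $e_1,e_2$ the two edges carrying the twist and $e_3$ the remaining edge; the left-hand side is a half-twist of the flat band $e_1\cup e_2$ near $v$. The plan is to reduce this twisted pair first to a single crossing between $e_1$ and $e_2$, and then to convert that crossing into a half-twist on $e_3$.

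For the first reduction, observe that a half-twist of the band $e_1\cup e_2$ is, by an isotopy supported near $v$, the same as one crossing between the strands $e_1$ and $e_2$ together with one half-twist on $e_1$ and one half-twist on $e_2$. Carrying the half-twist on $e_1$ across the vertex $v$ onto $e_2$ by Lemma~\ref{half-twist-move} produces two half-twists on $e_2$, i.e.\ a full twist, which is erased by Lemma~\ref{fulltwist-vanish}. What is left is a single crossing between $e_1$ and $e_2$ sitting next to $v$. Note that this crossing cannot simply be undone by a crossing change: Lemma~\ref{edge-self-change} only replaces it by the opposite crossing, so a genuine further move is needed.

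For the second reduction, slide the crossing against $v$ and pull the band $e_1\cup e_2$ past the vertex disk at $v$ (an isotopy of the clasper, which can also be read off up to link-homotopy from the canceling pair of Lemma~\ref{para-cancel}). For the correct sense of this move the two strands are interchanged, so the crossing is resolved, while one half-twist is deposited on each of $e_1$, $e_2$ and $e_3$. Merging the half-twists on $e_1$ and $e_2$ by one more application of Lemma~\ref{half-twist-move} produces a full twist (or a cancelling pair) on that edge, which disappears by Lemma~\ref{fulltwist-vanish} (or by isotopy), leaving exactly one half-twist on $e_3$. Running the same argument with all twist and crossing signs reversed covers the remaining case.

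The step I expect to be delicate is the sign and orientation bookkeeping in the second reduction: the cyclic order of the edges at $v$ is genuine data in the clasper calculus, so one has to check carefully in which sense to perform the move past the vertex disk and how many half-twists of which sign land on each of $e_1,e_2,e_3$, in order to be certain the process terminates at precisely one half-twist on $e_3$ rather than at none or at a half-twist on the wrong edge. This stays manageable because the only quantity that must match at the end is the parity of the twisting on each edge — any two half-twists on a single edge differ by a full twist, which vanishes by Lemma~\ref{fulltwist-vanish} — and that parity is forced by the parity of the original twist of the pair.
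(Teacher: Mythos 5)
Your argument is correct and is essentially the proof the paper intends: the paper gives no details, only noting that this lemma (like Lemmas \ref{fulltwist-vanish} and \ref{half-twist-move}) follows from Lemma \ref{para-cancel}, and your combination of a local isotopy of the vertex disk with Lemmas \ref{half-twist-move} and \ref{fulltwist-vanish} is the standard way to fill that in, with the closing observation that only the parity of half-twists on each edge matters (since a full twist vanishes) correctly disposing of the sign bookkeeping. The only weak spot is the parenthetical claim that the rotation past the vertex disk "can be read off from Lemma \ref{para-cancel}" --- that aside is unjustified, but harmless, since the move is already justified as an ambient isotopy.
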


\par
By Lemma \ref{half-twist-move}, half-twists on a $C_k$-tree are gathered to an edge incident to a leaf and by Lemma \ref{fulltwist-vanish}, all the half-twists vanish if the number of the half-twists is even or a half-twist is left otherwise. We call the former $C_k$-tree \textit{non-twisted} and the latter \textit{twisted}. We assume that the half-twist of twisted $C_k$-tree is at the edge incident to the leaf intersecting the highest order component among the leaves of the $C_k$-tree. 
\par
 The following four lemmas are originally stated up to $C_k$-moves for an integer $k$ in the previous works. In this paper, we rephrase them up to link-homotopy by using Lemma \ref{vanish}.

\begin{lemma}[\cite{Ha, MY}] \label{edge-cross-change}
A crossing change between a $C_i$-tree and a $C_j$-tree makes a new $C_{i+j+1}$-tree which is a union of copies of the two simple tree claspers connected by a new edge as in the figure.
$$
\raisebox{-20 pt}{\begin{overpic}[width=150pt]{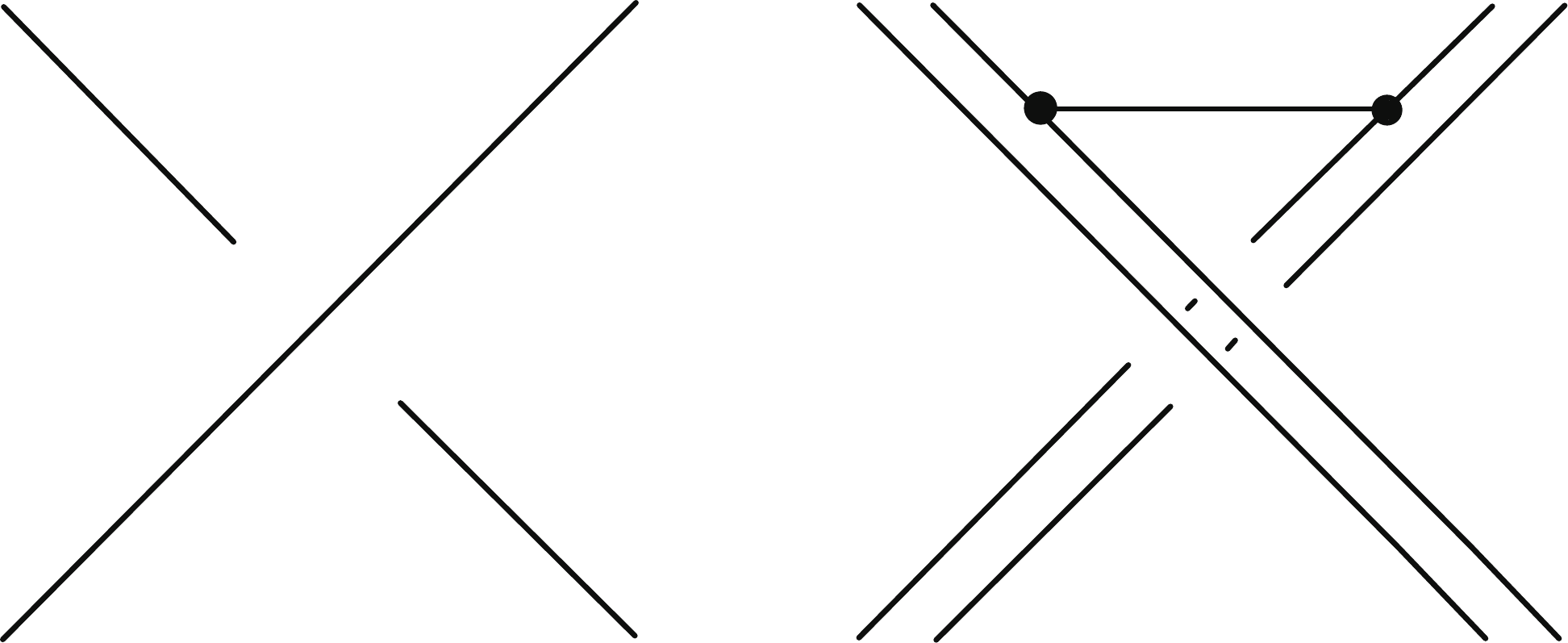}
\put(65,26){\mbox{$\overset{\mbox{\rm l.h.}}{\sim}$}}
\put(4,60){\footnotesize $C_i$}\put(46,60){\footnotesize $C_j$}
\put(75,50){\footnotesize $C_i$}\put(146,50){\footnotesize $C_j$}
\put(101,55){\footnotesize $C_{i+j+1}$}
\end{overpic}}
$$ 
\end{lemma}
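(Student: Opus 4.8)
The plan is to deduce this from the form of the statement already established in \cite{Ha} (see also \cite{MY}): there the crossing change between an edge of the $C_i$-tree and an edge of the $C_j$-tree is shown to differ from surgery along the new $C_{i+j+1}$-tree only by surgeries along claspers of strictly higher degree. So the whole task is to show that, up to link-homotopy, all of those higher-degree correction claspers can be discarded, which is exactly where Lemma \ref{vanish} enters.

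To set this up I would first recall the crossing-change construction of clasper calculus. Let $e$ be the chosen edge of the $C_i$-tree $T$ and $e'$ the chosen edge of the $C_j$-tree $T'$, and change the crossing between $e$ and $e'$. Applying the zip construction and the fundamental moves of \cite{Ha}, one pushes the interaction between $e$ and $e'$ toward the leaves of $T$ and of $T'$; each time a leaf is pushed past a trivalent vertex it is replaced by parallel copies running along the two outgoing edges, together with a higher-degree correction clasper. Iterating until the leaves are reached converts the crossing change into surgery along a single tree clasper $T''$ built from parallel copies of $T$ and of $T'$ joined by one new edge at the former crossing site, so with $(i+1)+(j+1)=i+j+2$ leaves (hence degree $i+j+1$), at the cost of finitely many correction claspers.

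The key structural point is that every correction clasper arising in this process carries two leaves that are parallel copies of a single leaf of $T$ or of $T'$; hence it has two leaves meeting the same component of $L$ and vanishes up to link-homotopy by Lemma \ref{vanish}. (The same remark shows that when the components met by the leaves of $T$ and of $T'$ overlap, $T''$ itself already has a repeated leaf, so the crossing change is link-homotopically trivial, consistently with the statement.) Discarding all the correction claspers leaves exactly the surgery along $T''$, which is the asserted $C_{i+j+1}$-tree. I expect the main obstacle to be the bookkeeping inside the zip construction: one has to verify that at every stage the corrections are genuine simple tree claspers for $L$ and that each really does inherit a pair of parallel leaves, so that Lemma \ref{vanish} applies uniformly. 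Granting this, the passage from the higher-degree statement of \cite{Ha} to the present link-homotopy statement is immediate.
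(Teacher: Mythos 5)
Your proposal matches the paper's (implicit) proof: the paper simply cites the higher-degree-equivalence version of this statement from \cite{Ha, MY} and notes that the rephrasing up to link-homotopy follows by killing the correction claspers with Lemma \ref{vanish}, exactly as you argue. The only refinement worth noting is that your leaf-tracking can be shortcut by a pigeonhole count: each correction tree has degree at least $i+j+2$, hence at least $i+j+3$ leaves, all parallel copies of the $i+j+2$ leaves of the two original trees, so two of its leaves necessarily meet the same component.
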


Remark that, in Lemma \ref{edge-cross-change}, if the $C_i$- and $C_j$-trees in the left-hand side have leaves at the same component, the new clasper in the right-hand side vanishes by Lemma \ref{vanish}.

\begin{lemma}[\cite{Ha, MY}] \label{leaf-exchange}
An exchange of leaves of a $C_i$-tree and a $C_j$-tree makes a new $C_{i+j}$-tree which is a fusion of copies of the two simple tree claspers as in the figure. 
$$
\raisebox{-20 pt}{\begin{overpic}[width=170pt]{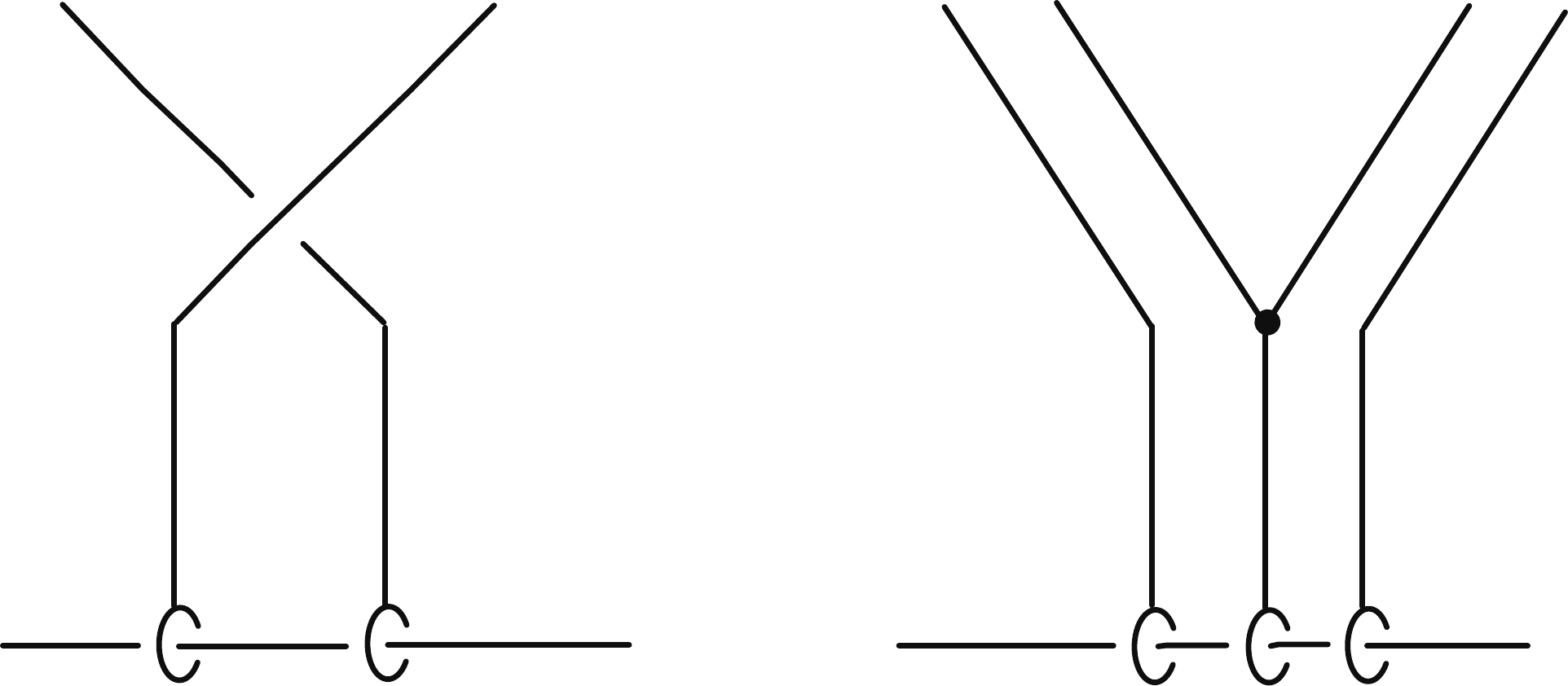}
\put(76,31){\mbox{$\overset{\mbox{\rm l.h.}}{\sim}$}}
\put(-5,65){\footnotesize $C_i$}\put(55,65){\footnotesize $C_j$}
\put(92,65){\footnotesize $C_i$}\put(170,65){\footnotesize $C_j$}
\put(122,65){\footnotesize $C_{i+j}$}
\end{overpic}}
$$ 
\end{lemma}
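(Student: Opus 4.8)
The plan is to obtain the link-homotopy statement from the $C_k$-move version of the leaf-exchange identity proved in \cite{Ha} (see also \cite{MY}), the only extra work being to keep track of the correction claspers precisely enough to discard them by Lemma~\ref{vanish}. Since the move is local, I would first reduce to the following situation: the leaf $\ell_1$ of the $C_i$-tree $T_1$ and the leaf $\ell_2$ of the $C_j$-tree $T_2$ to be exchanged grab two consecutive arcs of a single component $K$ of $L$, all other leaves and edges of $T_1\cup T_2$ are supported away from these arcs, and the exchange is realized by an ambient isotopy supported near $K$ that drags $\ell_1$ together with its incident edge across $\ell_2$ and its incident edge.

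The effect of pulling the disk $\ell_1$ across (a regular neighborhood of) $\ell_2\cup(\text{its incident edge})$ is governed by the standard clasper identity of \cite{Ha,MY}: it replaces $T_1\cup T_2$ by the pair with $\ell_1$ and $\ell_2$ interchanged, together with one $C_{i+j}$-tree $T_{\mathrm{new}}$ --- the fusion of a parallel copy of $T_1$ and a parallel copy of $T_2$, formed by deleting $\ell_1,\ell_2$, joining the two resulting free edge-ends to a new trivalent vertex, and attaching to that vertex a new edge ending in a single leaf grabbing $K$ --- plus a family of tree claspers of degree strictly larger than $i+j$. The key observation is that the resolution can be arranged so that every clasper in this higher-degree family is produced as a fusion involving a further parallel copy of $\ell_1$ and a further parallel copy of $\ell_2$; since both of these lie on $K$, such a clasper has two leaves on $K$ and therefore vanishes up to link-homotopy by Lemma~\ref{vanish}. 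Hence only $T_{\mathrm{new}}$ survives, and half-twists introduced along the way are gathered onto edges incident to leaves and absorbed by Lemmas~\ref{fulltwist-vanish}, \ref{half-twist-move} and \ref{vertex-twist}; this yields exactly the asserted identity.

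The main obstacle is the bookkeeping in this last point: one must organize the resolution of \cite{Ha,MY} --- for instance as a sequence of elementary finger moves, applying Lemma~\ref{edge-cross-change} at each edge--edge crossing and Lemma~\ref{crossing-change} or Lemma~\ref{para-cancel} where needed --- so that every tree clasper created other than $T_{\mathrm{new}}$ visibly repeats the leaf on $K$, and hence dies by Lemma~\ref{vanish}, rather than contributing an honest new tree. Once this is checked, no degree filtration or finite-type argument is needed: the equivalence is genuine link-homotopy, because the passage from ``up to $C_{k}$-moves'' to ``up to link-homotopy'' is effected entirely by the repeated-leaf vanishing of Lemma~\ref{vanish}.
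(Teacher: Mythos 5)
Your proposal matches the paper's treatment exactly: the paper offers no independent argument, stating only that this lemma is the $C_k$-move identity of \cite{Ha, MY} ``rephrased up to link-homotopy by using Lemma \ref{vanish}'', which is precisely your strategy of importing the leaf-exchange/zip identity and discarding all higher-degree correction trees via the repeated-leaf vanishing. One minor point: some correction terms repeat a leaf of $T_1$ or $T_2$ other than the exchanged pair on $K$ (e.g.\ fusions using two parallel copies of one of the trees), but every tree of degree greater than $i+j$ supported near $T_1\cup T_2$ must repeat some leaf by pigeonhole, so Lemma \ref{vanish} still applies and your argument goes through.
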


Remark that, in Lemma \ref{leaf-exchange}, if the $C_i$- and $C_j$-trees in the left-hand side have leaves at the same component other than the ones shown in the figure, the new clasper in the right-hand side vanishes by Lemma \ref{vanish}.

\begin{lemma}[\cite{M}] \label{IHX}
For a $C_k$-tree, the following relation holds up to link-homotopy. 
$$
\raisebox{-20 pt}{\begin{overpic}[width=170pt]{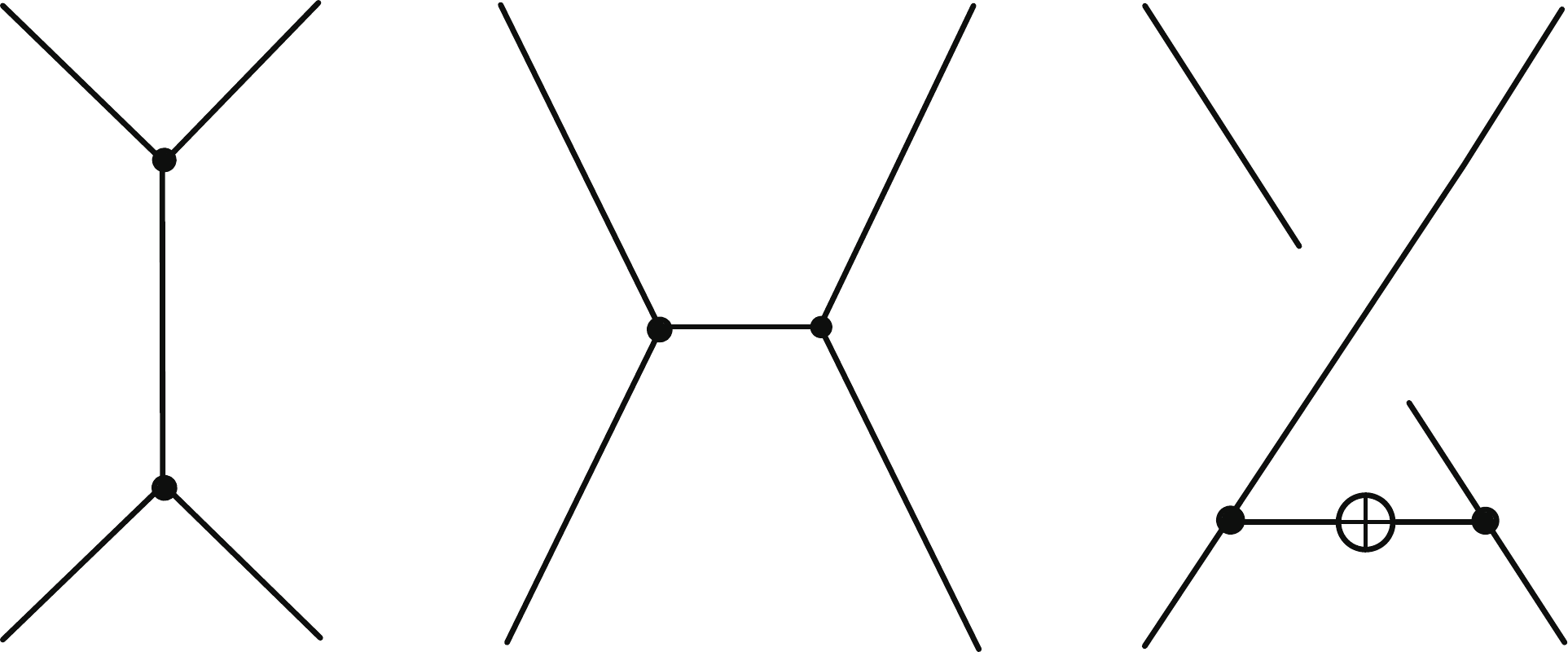}
\put(38,32){\mbox{$\overset{\mbox{\rm l.h.}}{\sim}$}}
\put(110,33){$\cup$}
\end{overpic}}
$$
Here the $C_k$-trees in the right-hand side are two copies of the $C_k$-tree in the left-hand side in its small neighborhood which differ in a small ball as shown respectively. 
This is well-defined up to link-homotopy from the remarks after Lemmas \ref{edge-cross-change} and \ref{leaf-exchange}. 
\end{lemma}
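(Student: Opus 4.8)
\emph{Proof proposal.} The plan is to prove the relation \emph{locally}, in the spirit of the remarks following Lemmas \ref{edge-cross-change} and \ref{leaf-exchange}. The internal edge $e$ of the $C_k$-tree on the left-hand side, together with its two trivalent vertices and the four edge-ends incident to them, lies in the interior of the clasper, away from $L$. So I would first isotope this portion into a small ball $B$ with $B\cap L=\emptyset$, arranging that the four subtrees leading to the leaves exit $B$ through fixed points of $\partial B$ and that the three trees appearing in the I, H and X configurations agree outside $B$. Using Lemmas \ref{fulltwist-vanish}, \ref{half-twist-move} and \ref{vertex-twist} I would then normalize the portion of the clasper inside $B$ to a standard, untwisted H\nobreakdash-shaped model with four external legs, so that it remains only to prove the identity for this local model (a $C_3$-tree in $B$ with four external legs); the general case then follows by attaching the fixed subtrees back at the four legs.

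For the local model there are two natural routes. The first is a direct computation: by Definition \ref{clasper}, surgery along the H-shaped piece is a band sum with the Brunnian tangle associated to the internal-edge clasper, and its effect on the meridian–longitude data of the four legs is governed by the commutator. Tracking this data, the effect of surgery on the I-tree equals the composition of the effects of surgery on the H- and X-trees, which is precisely the Jacobi identity $[[x,y],z]=[[x,z],y]+[x,[y,z]]$ in the relevant nilpotent quotient. The second route, which keeps everything inside the clasper calculus already set up, is to import the IHX relation for claspers up to $C_{k+1}$-equivalence from \cite{M} (see also \cite{Ha}) and observe that every correction term in its proof is a tree of order $\ge k+1$; after the localization above, any such correction term is either trivial or has two leaves on a common component of $L$, hence vanishes by Lemma \ref{vanish}. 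This is the same mechanism used to rephrase Lemmas \ref{edge-cross-change} and \ref{leaf-exchange} up to link-homotopy, and it also explains why the right-hand side is well defined up to link-homotopy, as claimed in the statement.

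The hard part will be the bookkeeping of half-twists and orientations. IHX is sign-sensitive, and in the clasper picture the signs are carried by half-twists on edges; one must check that the two copies on the right-hand side carry exactly the twists drawn, and that these can be brought to normal form via Lemmas \ref{fulltwist-vanish}--\ref{vertex-twist} without spawning extra terms. A secondary subtlety is the localization step itself when several subtrees share components with $L$: the isotopy pushing $e$ into $B$ must not force a leaf across $L$, and I would handle any such forced crossing by first converting it, via Lemma \ref{crossing-change} and Lemma \ref{edge-cross-change}, into higher-order trees that then die by Lemma \ref{vanish}.
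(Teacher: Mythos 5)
Your second route is exactly what the paper does: Lemma \ref{IHX} is not proved in the paper but imported from \cite{M} (see also \cite{Ha}), where the IHX relation is established up to $C_{k+1}$-equivalence, and the paper's only added content is the observation (stated in the paragraph preceding Lemmas \ref{edge-cross-change}--\ref{edge-comp-change}) that the correction terms are higher-order trees with two leaves on a common component, which die by Lemma \ref{vanish}; your localization and twist-normalization discussion is extra care that the cited sources already handle. Your first route, however, is not a proof as stated: computing that surgery along the I-, H- and X-trees has the same effect on meridian data in the reduced nilpotent quotient only shows that certain invariants agree, and concluding link-homotopy from equality of invariants would require the very classification that this lemma is being used to establish --- so it is circular in this context, and in any case the link-homotopy type of a clasper surgery is not determined by its action on $\pi_1$ of the complement of the legs. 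Since you offer the citation route as the alternative and correctly identify Lemma \ref{vanish} as the mechanism that upgrades the $C_{k+1}$-statement to a link-homotopy statement, the proposal is acceptable, but you should commit to that route and drop the Jacobi-identity computation, or else supply the genuinely geometric zip-construction argument from \cite{Ha,M} that underlies it.
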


\begin{lemma}[\cite{Ha,FY}] \label{edge-comp-change}
A crossing change between a $C_k$-tree and a link component makes a new $C_{k+1}$-tree up to link-homotopy as follows. 
$$
\raisebox{-23 pt}{\begin{overpic}[width=80pt]{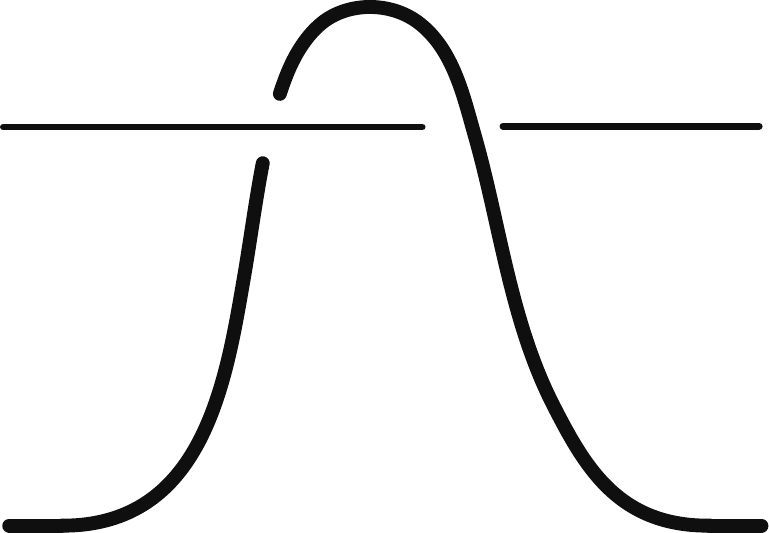}
\put(2,46){\footnotesize $C_k$}
\end{overpic}}
\,\,\,\,
\mbox{$\overset{\mbox{\rm l.h.}}{\sim}$}
\,\,\,\,
\raisebox{-28 pt}{\begin{overpic}[width=80pt]{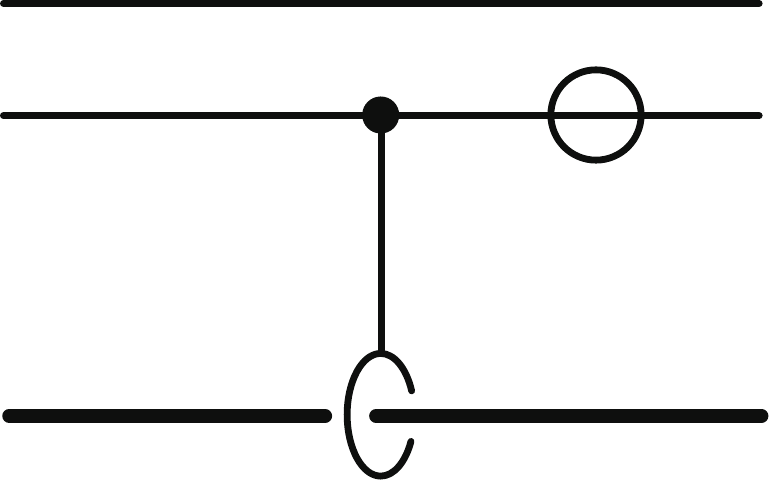}
\put(2,53){\footnotesize $C_k$}\put(2,26){\footnotesize $C_{k+1}$}
\end{overpic}}
$$
Here the bold arcs are link components, the new $C_{k+1}$-tree in the right-hand side is a copy of the $C_k$-tree with a new vertex and a new edge connecting the vertex with the new leaf intersecting with the component and the new $C_{k+1}$-tree has an extra minus half-twist.
\end{lemma}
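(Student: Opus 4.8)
The plan is to reduce the statement to Habiro's clasper calculus and then rephrase the resulting identity up to link-homotopy, in the same spirit as the rephrasings of Lemmas \ref{edge-cross-change}--\ref{IHX}. First I would use Lemma \ref{crossing-change} to replace the crossing change between the link component and the relevant edge $e$ of the $C_k$-tree $T$ by the insertion of an auxiliary $C_1$-tree $C$: one leaf of $C$ becomes a small meridian of $e$, the other a small meridian of the component, and the two leaves are joined by an edge carrying a definite framing. I would record this framing carefully from the outset, since it is one source of the eventual negative half-twist.

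The core of the argument is the configuration in which $C$ has a leaf that is a meridian of the edge $e$ of $T$. Here I would invoke the zip construction of \cite{Ha}, in the form used in \cite{FY}: absorbing such a meridian leaf into the edge it encircles replaces $C$ by the clasper obtained from a parallel copy of $T$ by inserting a new trivalent vertex on $e$ and attaching the surviving edge and leaf of $C$ to that vertex. Since $T$ has $k+1$ leaves, this new tree has $k+2$ leaves, that is, it is a $C_{k+1}$-tree of the shape claimed, while the original $C_k$-tree $T$ is recovered unchanged. In Habiro's setting this identity holds only modulo surgery along claspers of higher degree; up to link-homotopy those spurious claspers are absorbed by Lemma \ref{vanish}, exactly as in the remarks following Lemmas \ref{edge-cross-change} and \ref{leaf-exchange}---each of them revisits a component already meeting a leaf, hence vanishes. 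This is the rephrasing up to link-homotopy announced for the present block of four lemmas.

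It remains to pin down the framing, that is, to check that precisely one extra negative half-twist survives on the new $C_{k+1}$-tree. I would gather all half-twists produced by Lemma \ref{crossing-change} and by the zip construction onto a single edge using Lemma \ref{half-twist-move}, cancel them in pairs by Lemma \ref{fulltwist-vanish}, and convert a twist running across the two edges at the new vertex into a half-twist by Lemma \ref{vertex-twist}; the one surviving half-twist is then placed on the edge incident to the new leaf, in accordance with the convention fixed above for twisted trees. I expect the real difficulty to lie entirely in this bookkeeping---identifying which parallel copy of $T$, with which edge conventions, the zip construction returns, and running the sign count for the half-twist without error. The link-homotopy reduction itself is routine once Lemma \ref{vanish} is available, since every extra clasper forced on us by the exact identity is supported near the component on which the crossing change was performed.
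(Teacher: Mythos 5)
Your proposal is correct and follows essentially the same route as the paper: the lemma is quoted from Habiro and Fleming--Yasuhara, where it is proved exactly as you describe, by realizing the crossing change as a clasp between the edge and the component, resolving the resulting meridian-of-an-edge leaf via the zip construction into a copy of the tree with a new vertex, edge and leaf, and the paper's only added content is that the higher-order claspers produced along the way die up to link-homotopy by Lemma~\ref{vanish}, which you also invoke. The one step you announce but do not execute is the count showing that exactly one \emph{negative} half-twist survives on the new $C_{k+1}$-tree; since that sign is part of the statement and is used later (e.g.\ in Example~\ref{example-edge-comp-change} and in the $\psi_{ij}$ moves), it should actually be carried out, though the method you outline via Lemmas~\ref{half-twist-move}, \ref{fulltwist-vanish} and \ref{vertex-twist} is the right one.
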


Remark that, in Lemma \ref{edge-comp-change}, if the $C_k$-tree in the left-hand side has a leaf intersecting the component, the new clasper in the right-hand side vanishes by Lemma \ref{vanish}. 

\begin{example} \label{example-edge-comp-change}
We show the relations in Lemma \ref{edge-comp-change} for $C_1$- and $C_2$-trees explicitly for later use. 
$$
(1)\hspace{0.3cm} \raisebox{-26 pt}{\begin{overpic}[width=150pt]{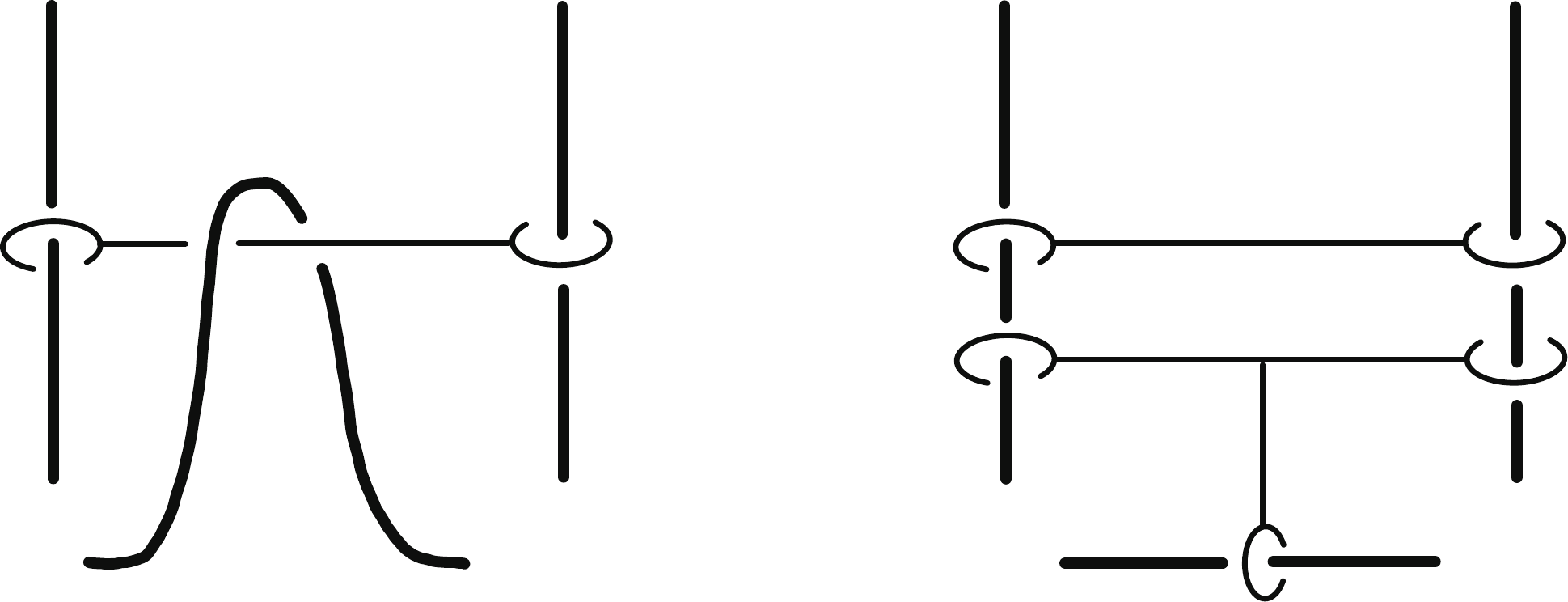}
\put(69,26){$\overset{\mbox{\rm\footnotesize l.h.}}{\mbox{\large$\sim$}}$}
\end{overpic}}
\,,
\hspace{0.6cm}
(2)\hspace{0.3cm}
\raisebox{-26 pt}{\begin{overpic}[width=150pt]{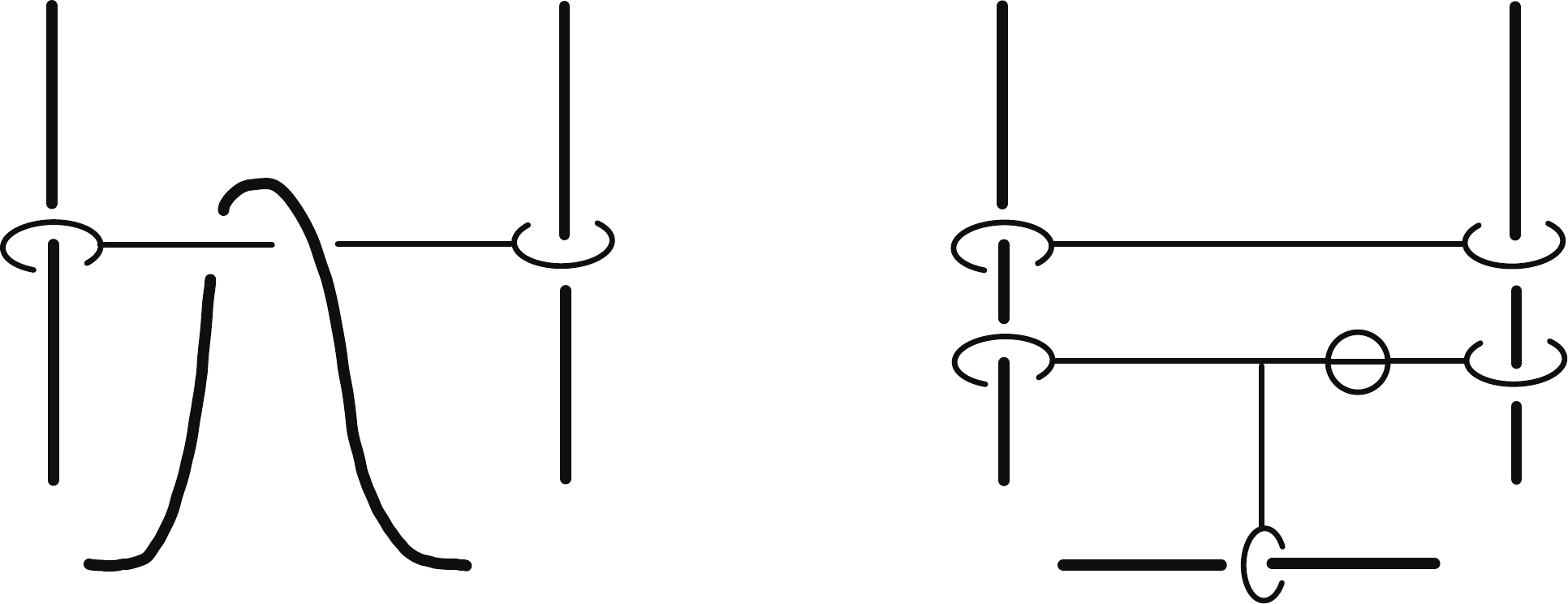}
\put(69,26){$\overset{\mbox{\rm\footnotesize l.h.}}{\mbox{\large$\sim$}}$}
\end{overpic}}\,,
$$

$$
(3)\hspace{0.3cm}\raisebox{-26 pt}{\begin{overpic}[width=150pt]{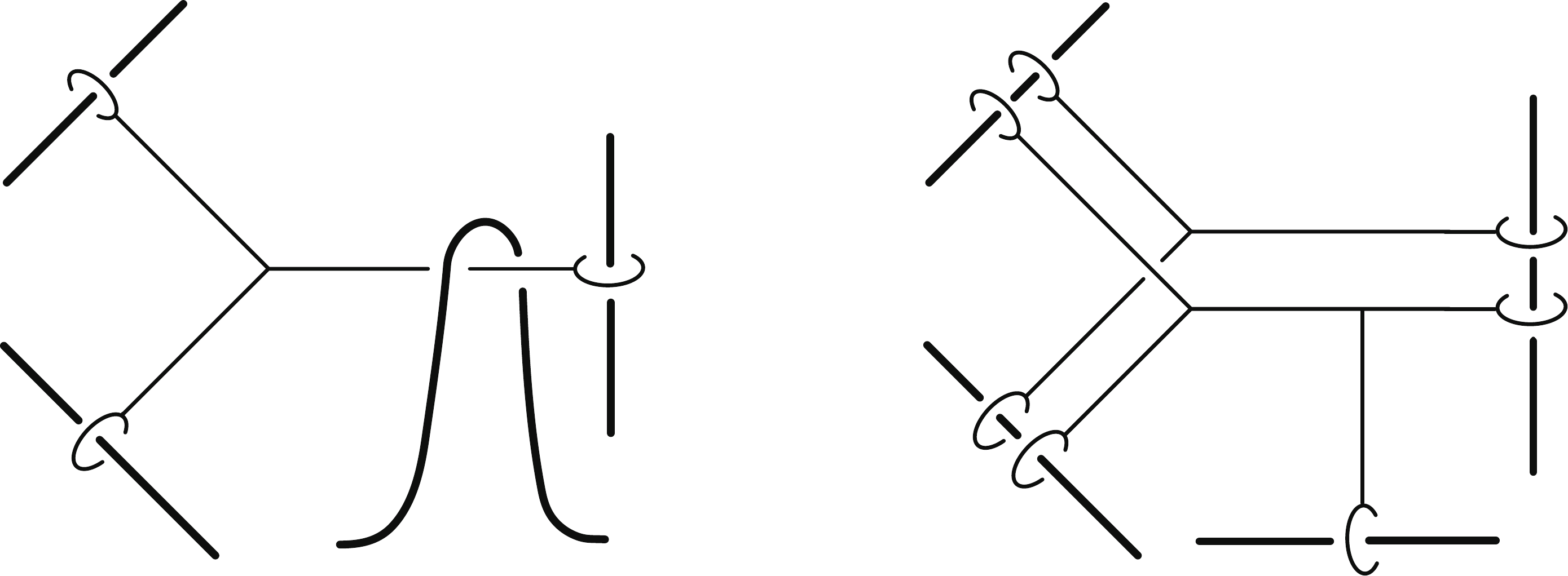}
\put(69,24){$\overset{\mbox{\rm\footnotesize l.h.}}{\mbox{\large$\sim$}}$}
\end{overpic}}
\,,
\hspace{0.6cm}
(4)\hspace{0.3cm}\raisebox{-26 pt}{\begin{overpic}[width=150pt]{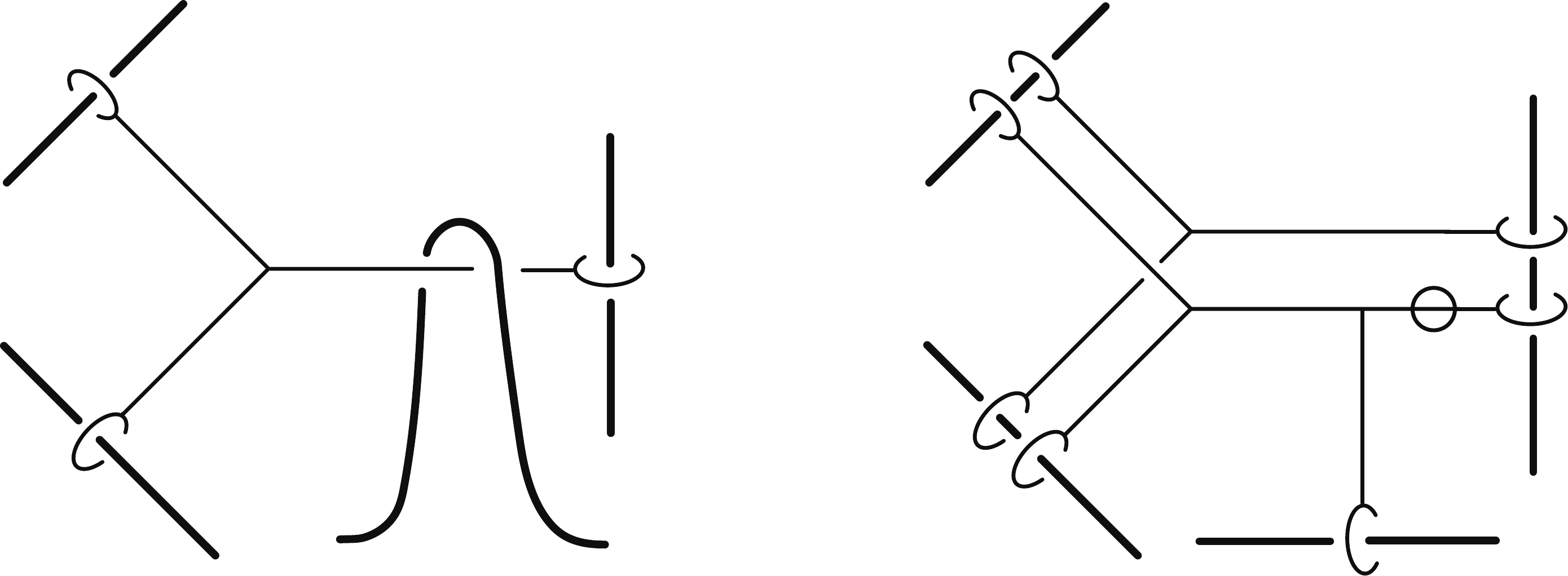}
\put(69,24){$\overset{\mbox{\rm\footnotesize l.h.}}{\mbox{\large$\sim$}}$}
\end{overpic}}\,.
$$
\end{example}

\par
Lemma \ref{edge-self-change} and the remark after Lemma \ref{edge-cross-change} show that some crossing changes between edges of claspers are achieved by link-homotopy. From now on, we sometimes omit over-under information of crossings in figures if the differences vanish up to link-homotopy. 
\par
Let a $C_k$-tree attached an integer $s$ be $s$ parallel copies of non-twisted $C_k$-trees if $0\leq s$, and $|s|$ parallel copies of twisted $C_k$-trees if $s<0$. For $0\leq s$, the $s$ parallel simple tree claspers can be presented as in Figure \ref{parallel clasper} up to link-homotopy. If $s<0$,  each clasper has a half-twist. Note that in this presentation there are ambiguities of the choices of over-under information at the crossings between the edges of the claspers and the arrangement of their leaves. However, since the remarks after Lemmas \ref{edge-cross-change} and \ref{leaf-exchange}, the ambiguities vanish up to link-homotopy. 

\begin{figure}[ht]
$$
\raisebox{-30 pt}{\begin{overpic}[height=70pt]{1-clasper01.pdf}
\put(5,33){$s$}
\end{overpic}}
\,\,\,\,\longrightarrow\,\,\,\,
\raisebox{-40 pt}{\begin{overpic}[height=80pt]{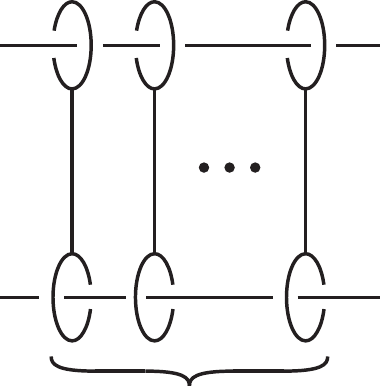}
\put(36,-9){$s$}
\end{overpic}}
\quad\quad
\raisebox{-30 pt}{\begin{overpic}[height=70pt]{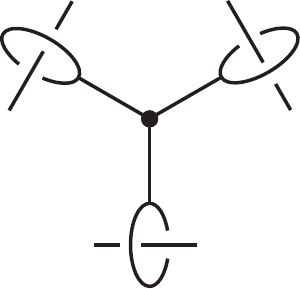}
\put(26,31){$s$}
\end{overpic}}
\,\,\,\,\longrightarrow\,\,\,\,
\raisebox{-35 pt}{\begin{overpic}[height=80pt]{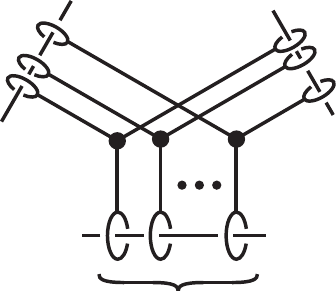}
\put(46,-9){$s$}
\end{overpic}}$$\\

$$
\raisebox{-25 pt}{\begin{overpic}[height=60pt]{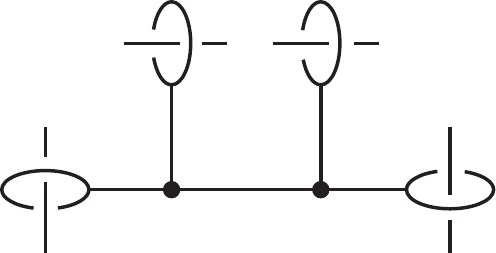}
\put(56,5){$s$}
\end{overpic}}
\,\,\,\,\longrightarrow\,\,\,\,
\raisebox{-25 pt}{\begin{overpic}[height=60pt]{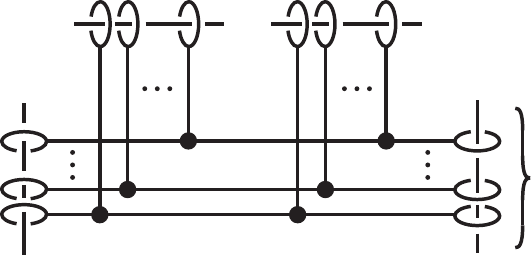}
\put(128,15){$s$}
\end{overpic}}
$$
\caption{Parallel simple tree claspers.} \label{parallel clasper}
\end{figure}

\par
Let $L$ be a 4-component link. Now we transform $L$ into a standard form by using lemmas above as the following steps. 
\par
(1) By Lemma \ref{crossing-change}, $L$ can be transformed into a trivial link with $C_1$-trees attached. 
\par
(2) By using Lemmas \ref{edge-cross-change}, \ref{leaf-exchange} and \ref{edge-comp-change}, for any pair of the components, we transform the $C_1$-trees in parallel and, by using Lemmas \ref{para-cancel} and \ref{fulltwist-vanish}, make them all non-twisted or twisted for each pair of components. There are $C_i$-trees $(i=2, 3)$ attaching to the components which occur in the moves of Lemmas \ref{edge-cross-change}, \ref{leaf-exchange} and \ref{edge-comp-change}. Note that, from Lemma \ref{vanish}, $C_i$-trees vanish if $4\leq i$. 

\par
(3) By using Lemmas \ref{vertex-twist}, \ref{edge-cross-change}, \ref{leaf-exchange} and \ref{edge-comp-change}, we transform the $C_2$-trees in parallel for any 3-tuple of the components and, by using Lemmas \ref{para-cancel}, \ref{fulltwist-vanish} and \ref{half-twist-move}, make them all non-twisted or twisted for each 3-tuple of components. We fix a configuration of the $C_1$- and $C_2$-trees as in Figure \ref{canon-form} left.

\par
(4) There are $C_3$-trees attaching to the components derived from steps (2) and (3). By using Lemmas \ref{vertex-twist}, \ref{edge-cross-change}, \ref{leaf-exchange}, \ref{IHX} and \ref{edge-comp-change}, we transform the $C_3$-trees in parallel so that, if we forget twists of edges, there are only two types of $C_3$-trees depicted in Figure \ref{canon-form} middle and right and, by using Lemmas \ref{para-cancel}, \ref{fulltwist-vanish} and \ref{half-twist-move}, make them all are non-twisted or twisted for each type.
\par
 We call the shape of the trivial link with the $C_1$-, $C_2$- and $C_3$-trees in Figure \ref{canon-form} a \textit{standard form} of $L$, where the alphabets near the claspers are the numbers of the parallel claspers. Here we depict $C_3$-trees separately for simplicity since, from the remarks after Lemmas \ref{edge-cross-change} and \ref{leaf-exchange}, 
the positions of leaves of $C_3$-trees do not affect the numbers of other claspers. 

\if0
\par
Let $L$ be a 4-component link. Now we transform $L$ into a standard form by using lemmas above. 
By Lemma \ref{crossing-change}, $L$ can be transformed into a trivial link with several $C_1$-trees attached. Then slide the leaves of the $C_1$-trees by using Lemmas \ref{edge-cross-change} and \ref{leaf-exchange} and, for any pair of the components, arrange them parallel by using Lemmas \ref{para-cancel} and \ref{fulltwist-vanish} so that all $C_1$-trees are non-twisted or twisted. There are several $C_i$-trees $(i=2, 3)$ attached to the components which occur in the moves of Lemmas \ref{edge-cross-change} and \ref{leaf-exchange}. Note that a clasper with two leaves on the same component vanish from Lemma \ref{vanish}. Thus $C_j$-trees ($4 \leq j$) especially vanish. Then slide the leaves of the $C_2$-trees by using Lemmas \ref{vertex-twist}, \ref{edge-cross-change} and \ref{leaf-exchange} and, for any 3-tuple of the components, arrange them parallel by using Lemmas \ref{para-cancel}, \ref{fulltwist-vanish} and \ref{half-twist-move} so that and all of them are non-twisted or twisted. 
We fix the configuration of the leaves of $C_1$- and $C_2$-trees as in Figure \ref{canon-form} left. There are several $C_3$-trees attached to the components. Then move the $C_3$-trees by using Lemmas \ref{vertex-twist}, \ref{edge-cross-change}, \ref{leaf-exchange} and especially Lemma \ref{IHX} so that, if we forget twists of edges, there are only two types of $C_3$-trees depicted in Figure \ref{canon-form} middle and right and arrange them parallel by using Lemmas \ref{para-cancel}, \ref{fulltwist-vanish} and \ref{half-twist-move} so that all of them are non-twisted or twisted. We call the shape of the trivial link with $C_1$-, $C_2$- and $C_3$-trees in Figure \ref{canon-form} a \textit{standard form} of $L$, where the alphabets near the claspers are the numbers of the parallel claspers. We depict $C_3$-trees separately for simplicity since, from the remarks after Lemmas \ref{edge-cross-change} and \ref{leaf-exchange}, 
the positions of leaves of $C_3$-trees do not affect the numbers of other claspers. 
\fi

\begin{figure}[ht]
\raisebox{0 pt}{\begin{overpic}[width=130pt]{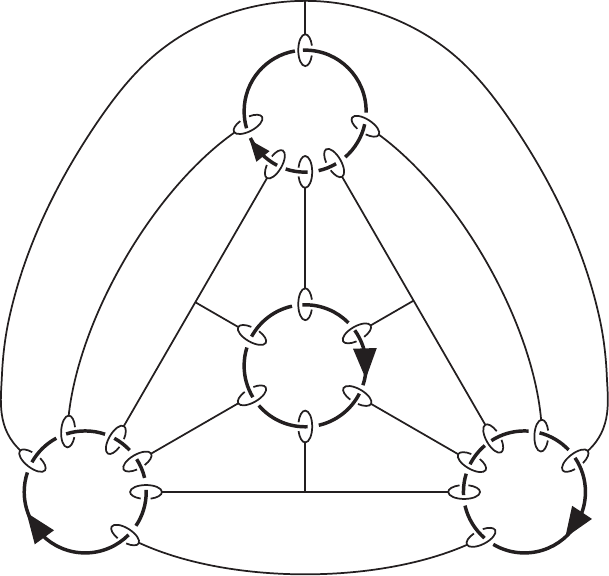}
\put(-1,2){\footnotesize \textbf{$1$}}
\put(127,5){\footnotesize \textbf{$2$}}
\put(82,42){\footnotesize \textbf{$3$}}
\put(77,107){\footnotesize \textbf{$4$}}
\put(82,25){\footnotesize$c_1$}
\put(35,35){\footnotesize$c_2$}
\put(61,-7){\footnotesize$c_3$}
\put(17,70){\footnotesize$c_4$}
\put(104,70){\footnotesize$c_5$}
\put(55,70){\footnotesize$c_6$}
\put(89,60){\footnotesize$f_1$}
\put(32,60){\footnotesize$f_2$}
\put(55,125){\footnotesize$f_3$}
\put(61,8){\footnotesize$f_4$}
\end{overpic}}
\qquad
\raisebox{5 pt}{\begin{overpic}[width=120pt]{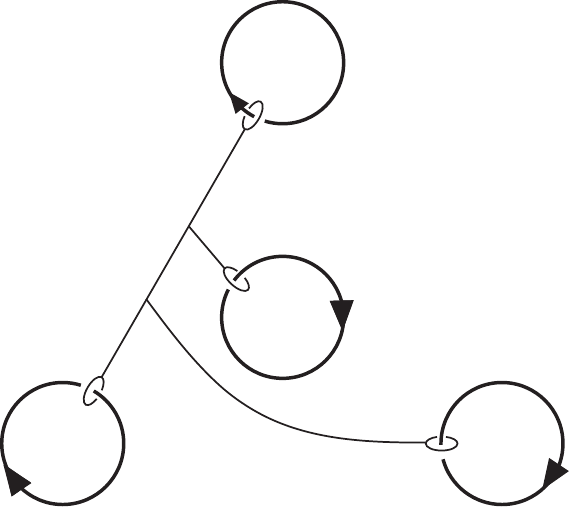}
\put(-6,-3){\footnotesize \textbf{$1$}}
\put(122,0){\footnotesize \textbf{$2$}}
\put(77,37){\footnotesize \textbf{$3$}}
\put(74,100){\footnotesize \textbf{$4$}}
\put(27,55){\footnotesize$t_1$}
\end{overpic}}
\qquad
\raisebox{5 pt}{\begin{overpic}[width=120pt]{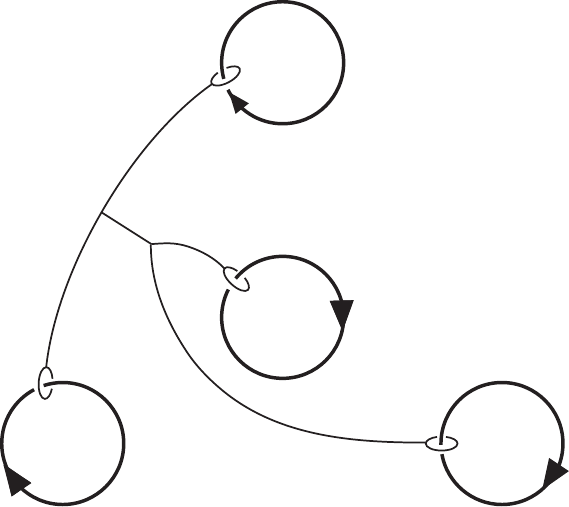}
\put(-6,-3){\footnotesize \textbf{$1$}}
\put(122,0){\footnotesize \textbf{$2$}}
\put(77,37){\footnotesize \textbf{$3$}}
\put(74,100){\footnotesize \textbf{$4$}}
\put(12,65){\footnotesize$t_2$}
\end{overpic}}
\caption{The standard form of $L$.} \label{canon-form}
\end{figure}

\par
Note that the configuration of claspers of the standard form is arranged so that, if we regard the components of the trivial link as vertices of a tetrahedron, the $C_1$-trees correspond to its edges and $C_2$-trees to its faces, see Figure \ref{conf-canon-form}, where we omit $C_3$-trees. 

\begin{figure}[ht]
$$
\raisebox{-20 pt}{\begin{overpic}[width=130pt]{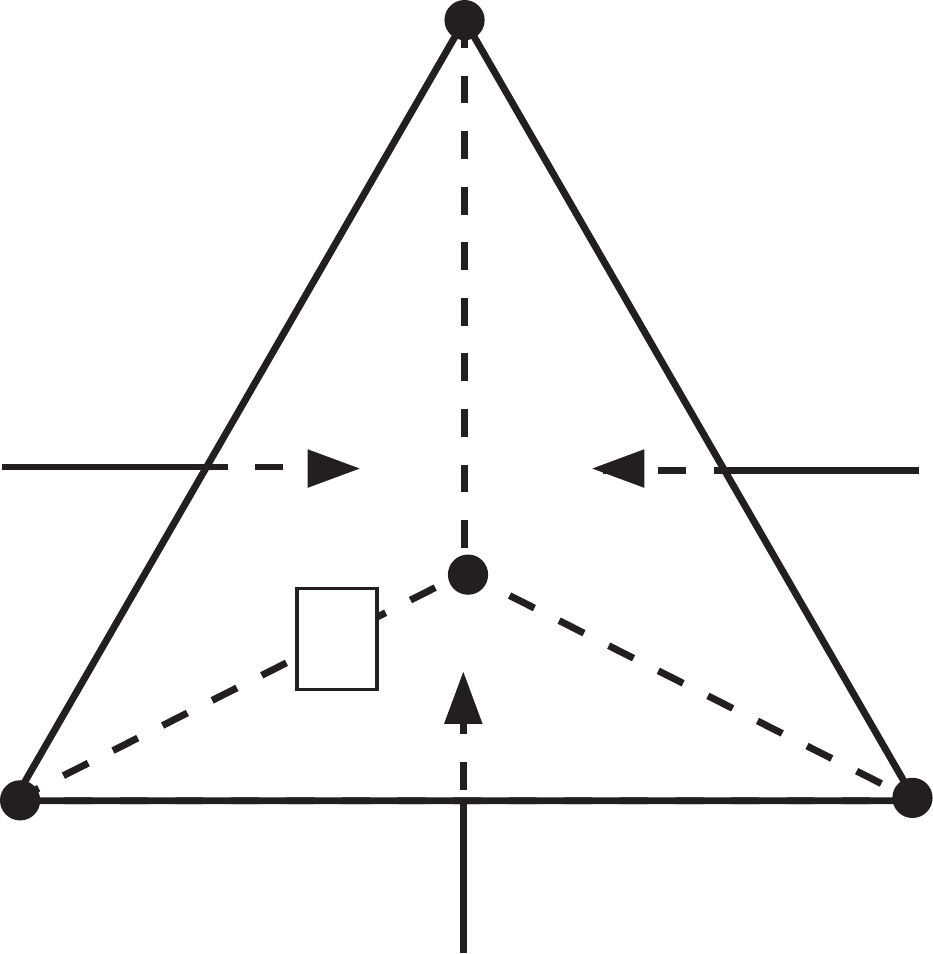}
\put(-6,11){\footnotesize \textbf{$1$}}
\put(131,11){\footnotesize \textbf{$2$}}
\put(70,53){\footnotesize \textbf{$3$}}
\put(69,128){\footnotesize \textbf{$4$}}
\put(94,40){\footnotesize$c_1$}
\put(26,40){\footnotesize$c_2$}
\put(53,14){\footnotesize$c_3$}
\put(21,75){\footnotesize$c_4$}
\put(100,75){\footnotesize$c_5$}
\put(53,83){\footnotesize$c_6$}
\put(129,65){\footnotesize$f_1$}
\put(-10,65){\footnotesize$f_2$}
\put(43,40){\footnotesize$f_3$}
\put(61,-10){\footnotesize$f_4$}
\end{overpic}}$$
\caption{The configuration of claspers of the standard form.} \label{conf-canon-form}
\end{figure}

\begin{remark}\label{3-comp-case}
For 2- and 3-component links, we can define standard forms as in Figure \ref{2-3-comp} by forgetting the third and fourth components or the fourth component respectively. 
Note that these forms were shown in \cite{TY}. 
For a 2-component link, the standard form is determined uniquely by the number $c_3$ which is the linking number multiplied by $-1$. Remember that the link-homotopy classes of 2-component links are classified by the linking number. For a 3-component link, we can also determine a unique standard form up to link-homotopy, see Remark \ref{3-comp-psi} below. 
\end{remark}

\begin{figure}[ht]
\raisebox{30 pt}{\begin{overpic}[width=120pt]{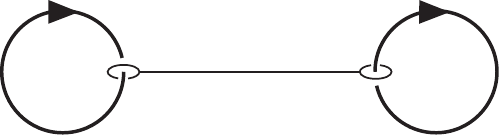}
\put(-7,0){\footnotesize \textbf{$1$}}
\put(121,0){\footnotesize \textbf{$2$}}
\put(56,5){\footnotesize$c_3$}
\end{overpic}}
\hspace{1.0cm}
\raisebox{0 pt}{\begin{overpic}[width=120pt]{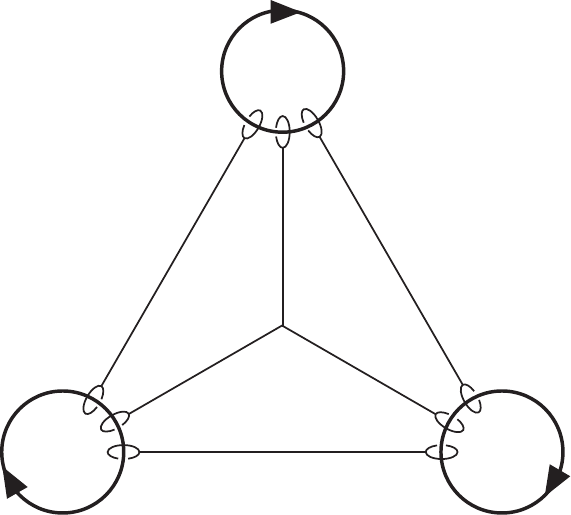}
\put(-7,0){\footnotesize \textbf{$1$}}
\put(121,0){\footnotesize \textbf{$2$}}
\put(74,98){\footnotesize \textbf{$3$}}
\put(84,54){\footnotesize$c_1$}
\put(25,54){\footnotesize$c_2$}
\put(56,5){\footnotesize$c_3$}
\put(48,44){\footnotesize$f_4$}
\end{overpic}}
\caption{The standard forms for 2- and 3-component links.} \label{2-3-comp}
\end{figure}

\section{Proof of Theorem \ref{mainthm}} \label{main}
\par
In this section, we define moves for the standard forms which correspond to the relations $\psi_{ij}$ in Theorem \ref{mainthm} and prove the theorem. We also call the moves $\psi_{ij}$.  

\if0
\begin{lemma} \label{main-lemma}
The following relations of simple tree claspers hold up to link-honotopy (l.h.), where bold lines are arcs of link components.

$$
(1)\hspace{0.3cm} \raisebox{-26 pt}{\begin{overpic}[width=150pt]{comp-clasp-change-2b.pdf}
\put(69,26){$\overset{\mbox{\rm\footnotesize l.h.}}{\mbox{\large$\sim$}}$}
\end{overpic}}
\,,
\hspace{0.6cm}
(2)\hspace{0.3cm}
\raisebox{-26 pt}{\begin{overpic}[width=150pt]{comp-clasp-change-1mb.pdf}
\put(69,26){$\overset{\mbox{\rm\footnotesize l.h.}}{\mbox{\large$\sim$}}$}
\end{overpic}}\,,
$$

$$
(3)\hspace{0.3cm}\raisebox{-26 pt}{\begin{overpic}[width=150pt]{comp-clasp-change-4-00b.pdf}
\put(69,24){$\overset{\mbox{\rm\footnotesize l.h.}}{\mbox{\large$\sim$}}$}
\end{overpic}}
\,,
\hspace{0.6cm}
(4)\hspace{0.3cm}\raisebox{-26 pt}{\begin{overpic}[width=150pt]{comp-clasp-change-3-00mb.pdf}
\put(69,24){$\overset{\mbox{\rm\footnotesize l.h.}}{\mbox{\large$\sim$}}$}
\end{overpic}}\,.
$$
\end{lemma}

\begin{proof}
We use the relations and the box notation in \cite{Ha}. The relation (2) and (4) are proved as follows. 
$$
\raisebox{-23 pt}{\begin{overpic}[width=60pt]{comp-clasp-change-1-01b.pdf}
\end{overpic}}
\,\,\,\overset{\mbox{\footnotesize 1}}{\mbox{\large$=$}}\,\,\,
\raisebox{-25 pt}{\begin{overpic}[width=60pt]{comp-clasp-change-1-02b.pdf}
\end{overpic}}
\,\,\,\overset{\mbox{\footnotesize 12}}{\mbox{\large$=$}}\,\,\,
\raisebox{-25 pt}{\begin{overpic}[width=80pt]{comp-clasp-change-1-03mb.pdf}
\end{overpic}}
\,\,\,\overset{\mbox{\footnotesize 6}}{\mbox{\large$=$}}\,\,\,
\raisebox{-25 pt}{\begin{overpic}[width=65pt]{comp-clasp-change-1-04mb.pdf}
\end{overpic}}
\,\,\,\overset{\mbox{\footnotesize l.h.}}{\mbox{\large$\sim$}}\,\,\,
\raisebox{-25 pt}{\begin{overpic}[width=60pt]{comp-clasp-change-1-05mb.pdf}
\end{overpic}}\,,
$$ 

$$
\raisebox{-25 pt}{\begin{overpic}[width=70pt]{comp-clasp-change-3-01b.pdf}
\end{overpic}}
\,\,\,\overset{\mbox{\footnotesize 1}}{\mbox{\large$=$}}\,\,\,
\raisebox{-25 pt}{\begin{overpic}[width=70pt]{comp-clasp-change-3-02b.pdf}
\end{overpic}}
\,\,\,\overset{\mbox{\footnotesize 12}}{\mbox{\large$=$}}\,\,\,
\raisebox{-25 pt}{\begin{overpic}[width=90pt]{comp-clasp-change-3-03mb.pdf}
\end{overpic}}
\,\,\,\overset{\mbox{\footnotesize 6,11}}{\mbox{\large$=$}}\,\,\,
\raisebox{-25 pt}{\begin{overpic}[width=80pt]{comp-clasp-change-3-05-2mb.pdf}
\end{overpic}}
$$ 
$$
\,\,\,\overset{\mbox{\footnotesize 6}}{\mbox{\large$=$}}\,\,\,
\raisebox{-25 pt}{\begin{overpic}[width=80pt]{comp-clasp-change-3-06-2mb.pdf}
\end{overpic}}
\,\,\,\,\overset{\mbox{\footnotesize l.h.}}{\mbox{\large$\sim$}}\,\,\,\,
\raisebox{-25 pt}{\begin{overpic}[width=80pt]{comp-clasp-change-3-11mb.pdf}
\end{overpic}}
\,\,\,\,\overset{\mbox{\footnotesize l.h.}}{\mbox{\large$\sim$}}\,\,\,\,
\raisebox{-25 pt}{\begin{overpic}[width=70pt]{comp-clasp-change-3-12mb.pdf}
\end{overpic}}\,,
$$ 
where the numbers of moves refer the moves of the same numbers in \cite[Proposition 2.7]{Ha}. The first link-homotopy move for the relation (4) holds since the $C_1$-tree which vanishes in this move corresponds to a crossing change and the crossing change is achieved up to link-homotopy by Lemma \ref{edge-self-change} (2).
The relations (1) and (3) are obtained from (2) and (4).
\end{proof}
\fi

\begin{definition} \label{psi-move}
We define the move $\psi_{ij}$ for the standard form of 4-component links, see Figure \ref{psi(ij)-move}, where we present the configuration as a tetrahedron and omit $C_2$- and $C_3$-trees for simplicity. Let $D_i$ be the disc which spans the trivial circle of the $i$-th component. The move $\psi_{ij}$ pushes an arc (marked by $\star$) of the trivial circle of the $j$-th component along the parallel $C_{1}$-trees which connect the $i$-th and $j$-th components, slides it over $D_i$, backs it under $D_i$ and finally pulls it back along the previous $C_{1}$-trees. When we pull back the arc, it goes across the $C_1$- and $C_2$-trees, the move is marked by $(\ast)$, and, from Lemma \ref{edge-comp-change} (see also Example \ref{example-edge-comp-change}), new $C_2$- and $C_3$-trees occur respectively. 
\end{definition}

\begin{figure}[ht]
\raisebox{-44 pt}{\begin{overpic}[width=110pt]{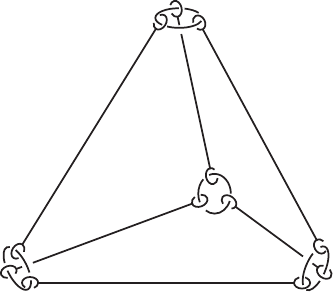}
\put(44,92){$i$}
\put(-3,-9){$j$}
\put(57,34){$k$}
\put(112,10){$l$}
\put(-5,14){\footnotesize $\star$}
\end{overpic}}
\,\,$\longrightarrow$\,\,
\raisebox{-44 pt}{\begin{overpic}[width=110pt]{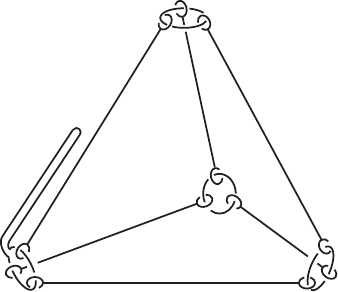}
\end{overpic}}
\,\,$\longrightarrow$\,\,
\raisebox{-44 pt}{\begin{overpic}[width=110pt]{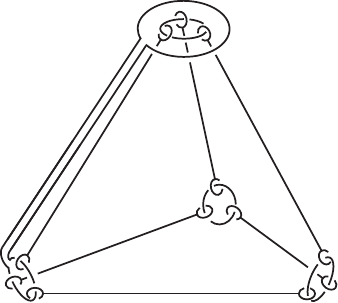}
\end{overpic}}\\

\,\,$\longrightarrow$\,\,
\raisebox{-44 pt}{\begin{overpic}[width=110pt]{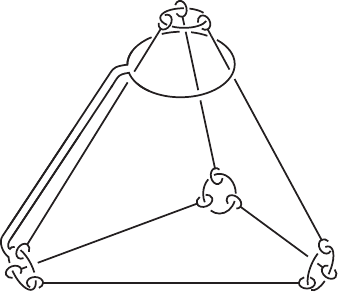}
\end{overpic}}
\,\,$\overset{(\ast)}{\longrightarrow}$\,\,
\raisebox{-44 pt}{\begin{overpic}[width=110pt]{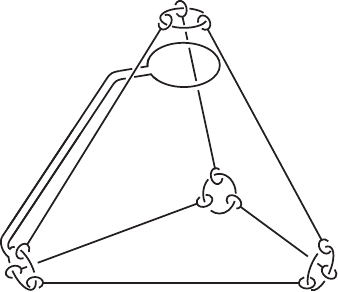}
\end{overpic}}
\,\,$\longrightarrow$\,\,
\raisebox{-44 pt}{\begin{overpic}[width=110pt]{clasper-change13.pdf}
\end{overpic}}
\caption{The move $\psi_{ij}$.} \label{psi(ij)-move} 
\end{figure}

\begin{remark}
The moves similar to $\psi_{ij}$ are shown in \cite[Figure 6]{L} and in \cite[Figure 21]{TY}. 
\end{remark}

The details of the changes of the numbers of claspers under $\psi_{ij}$ are depicted in Figure \ref{psi(ij)-move2}, where they are shown as diagrams, thick lines are components of the trivial link, thin lines are claspers and we depict only claspers which relate to this move. The integers $a$, $b$ and $c$ are the numbers of parallel claspers. Here the second move $(\ast\ast)$ holds up to link-homotopy and from Lemma \ref{edge-comp-change} (see also Example \ref{example-edge-comp-change}) the fourth move $(\ast\ast\ast)$ makes new $C_2$- and $C_3$-trees which are presented by the dotted lines with the numbers of parallel claspers. 

\begin{figure}[ht]
\raisebox{-37 pt}{\begin{overpic}[width=100pt]{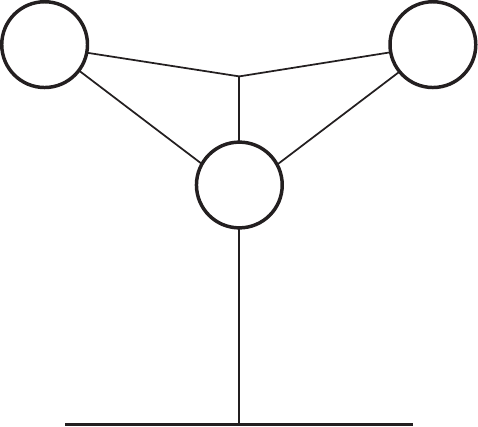}
\put(36,35){$i$}
\put(18,6){$j$}
\put(-9,70){$k$}
\put(102,70){$l$}
\put(26,56){\footnotesize$a$}
\put(72,56){\footnotesize$b$}
\put(48,76){\footnotesize$c$}
\end{overpic}}
\,\,$\longrightarrow$\,\,
\raisebox{-37 pt}{\begin{overpic}[width=100pt]{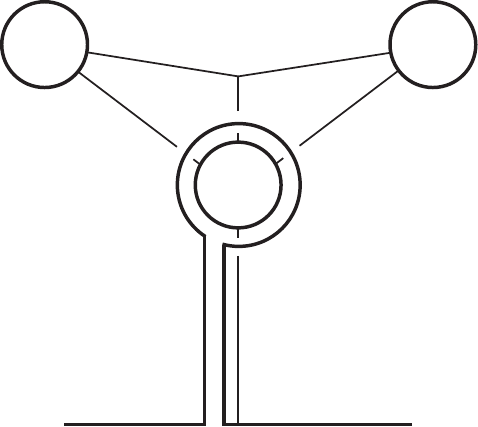}
\end{overpic}}
\,\,$\overset{(\ast\ast)}{\longrightarrow}$\,\,
\raisebox{-37 pt}{\begin{overpic}[width=100pt]{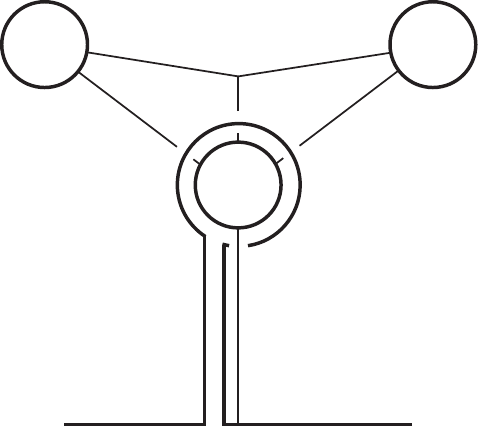}
\end{overpic}}\\

\,\,$\longrightarrow$\,\,
\raisebox{-37 pt}{\begin{overpic}[width=100pt]{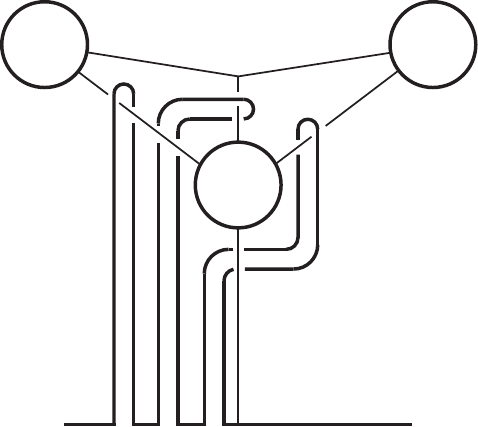}
\end{overpic}}
\,\,$\overset{(\ast\ast\ast)}{\longrightarrow}$\,\,
\raisebox{-37 pt}{\begin{overpic}[width=100pt]{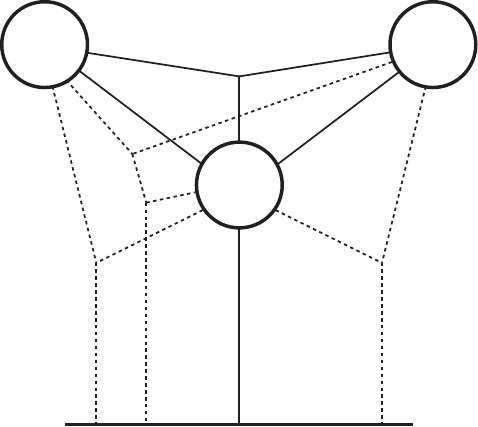}
\put(40,32){$i$}
\put(9,6){$j$}
\put(-9,70){$k$}
\put(102,70){$l$}
\put(4,32){\footnotesize$+a$}
\put(83,32){\footnotesize$-b$}
\put(28,51){\scriptsize$+c$}
\end{overpic}}
\caption{The change of the numbers of claspers by $\psi_{ij}$.} \label{psi(ij)-move2}
\end{figure}

The changes of the numbers of claspers under a part of $\psi_{ij}$ moves are listed in Table \ref{originaltable}. The other moves of $\psi_{ij}$ are obtained from compositions of them. Let $i, j, k, l$ are distinct numbers in $\{1, 2, 3, 4\}$, then $\psi^{-1}_{il}=\psi_{jl}\psi_{kl}$. 

\begin{remark}\label{3-comp-psi}
We can also define moves $\psi'_{ij}$, where $i\neq j$ and $1\leq i, j \leq 3$, for the standard form of a 3-component link (see Figure \ref{2-3-comp}) by forgetting the fourth component from $\psi_{ij}$. 
Let $\mathcal{L}_3$ be the set of the link-homotopy classes of 3-component links and $N'$ the set of 4-tuples of integers $e_1$, $e_2$, $e_3$ and $f_4$ modulo $\psi'_{ij}$. We can check that there is a bijection between $\mathcal{L}_3$ and $N'$, see the proof of Theorem 1.7 in \cite{TY}. Note that, in \cite{TY}, this fact was discussed up to clasp-pass moves (i.e. $C_3$-moves) then that induces the classification up to link-homotopy since a clasp-pass move is obtained by link-homotopy for 3-component links. 
\par 
Moreover, we can put $f_4$ in the range $0 \leq f_4 < \gcd^{\ast} (e_1, e_2, e_3)$ by using $\psi'_{ij}$. Then the number $f_4$ is equal to the Milnor homotopy invariant $\overline{\mu}_L(123)$ and this standard form is uniquely determined for a 3-component link. Remember that the link-homotopy classes of 3-component links are classified by the Milnor homotopy invariants $\overline{\mu}_L(12)$, $\overline{\mu}_L(23)$, $\overline{\mu}_L(13)$ and $\overline{\mu}_L(123)$.
\end{remark}

\par
We recall the argument in \cite{L} (see also \cite{Mil01}). Let $L$ be an $n$-component link and $m_i$ a meridian of the $i$-th component of $L$ in the fundamental group $\pi_1(S^3\setminus L)$ of the complement of $L$. Here $m_i$ is a loop which starts the basepoint to a point near the $i$-th component, goes around the $i$-th component once with linking number 1 and goes back the first path. The \textit{reduced group} $\mathcal{G}(L)$ of $L$ is $\pi_1(S^3\setminus L)/\bigl<\!\bigl<[m_i^g,m_i^{g'}]\bigr>\!\bigr> $, where $[\cdot,\cdot]$ is a commutator: $[a,b]=aba^{-1}b^{-1}$, $\bigl<\!\bigl<[m_i^g,m_i^{g'}]\bigr>\!\bigr>$ is the normal closure of the set $\{[m_i^g, m_i^{g'}]| g, g'\in \pi_1(S^3\setminus L), 1\leq i\leq n\}$, and $m_i^g$ is the conjugate of $m_i$: $m_i^g=gm_ig^{-1}$.
It was shown that $\mathcal{G}(L)$ is generated by meridians $m_i$.
It was also shown that if two links $L$ and $L'$ are link-homotopic then $\mathcal{G}(L)$ and $\mathcal{G}(L')$ are isomorphic. 
\par
Suppose that two ($n+1$)-component links $L$ and $L'$ have the equivalent first $n$-component sublink $L_n$ in common; $L = L_n\cup K$ and $L'=L_n\cup K'$. Consider the reduced group $\mathcal{G}(L_n)$. An automorphism of $\mathcal{G}(L_n)$ which is induced by link-homotopies of $L_n$ is called \textit{a geometric automorphism}. Let $\alpha$ and $\alpha'$ in $\mathcal{G}(L_n)$ represent $K$ and $K'$ respectively. Then it was proved that the two ($n+1$)-component links $L$ and $L'$ are link-homotopic if and only if there is a geometric automorphism $\Phi$ of $\mathcal{G}(L_n)$ satisfying $\Phi(\alpha)=\alpha'$. 

\par
Consider the 4-component case. Let $L$ be a 4-component link and $K_i$ $(1\leq i \leq 4)$ its $i$-th component. The first 3-component sublink $L_3=K_1\cup K_2 \cup K_3$ is classified up to link-homotopy by 4 integers $k$, $l$, $r$ and $d$ $(0\leq d < \gcd^{\ast} (k,l,r) )$, i.e. the Milnor homotopy invariants of length up to three. Let $x$, $y$ and $z$ be meridians of $K_1$, $K_2$ and $K_3$ respectively and $\alpha$ a presentation of $K_4$ in the reduced group $\mathcal{G}(L_3)$ of the sublink $L_3$. 
From algebraic arguments, (since $\mathcal{G}(L_3)$ is a nilpotent group of class $4$ and commutators with repeats vanish from the relations of $\mathcal{G}(L_3)$,) $\alpha$ is presented as follows.
$$\alpha=x^{e_1}y^{e_2}z^{e_3}[x,y]^{e_4}[x,z]^{e_5}[y,z]^{e_6}[y,[x,z]]^{e_7}[z,[x,y]]^{e_8},$$
where $e_i$ is an integer which is called a \textit{commutator number}. This presentation has ambiguities induced from relations of $\mathcal{G}(L_3)$ and geometric automorphisms of $\mathcal{G}(L_3)$.
These ambiguities are summarized in Table \ref{L-table}; $\Phi_1$ and $\Phi_2$ are from the relations of $\mathcal{G}(L_3)$, $\Phi_3$, $\Phi_4$ and $\Phi_5$ are from the inner (geometric) automorphisms of $\mathcal{G}(L_3)$, i.e. mapping meridians to their conjugates, 
and $\Phi_6$ is from the outer geometric automorphisms of $\mathcal{G}(L_3)$.

\begin{remark} \label{red-rel}
We note relations in $\mathcal{G}(L_3)$ with fixed meridians $x$, $y$ and $z$. Let $g$ and $h$ be words in $\{x,y,z\}$ and $g_{\check{a}}$ the word which is obtained by removing $a$ from $g$ for $a \in \{x,y,z\}$.  Then the following relations hold. 
\begin{itemize}
\item $a^{g}a^{h}=a^{h}a^{g}$ for $a\in \{x,y,z\}$. 
\item $a^g=a^{g_{\check{a}}}$ for $a\in \{x,y,z\}$.
\item $[a,b]^{-1}=[a^{-1},b]=[a,b^{-1}]$ and $[a,[b,c]]^{-1}=[a^{-1},[b,c]]=[a,[b,c]^{-1}]$ for $a$, $b$ and $c\in \{x,y,z\}$.  
\end{itemize}
\end{remark}

\begin{proof}[Proof of Theorem \ref{mainthm}] 
Let $L_a=L_a(k,l,r,d,e_1,\dots, e_8)$ be a 4-component link which consists of $L_3$ determined by $(k, l, r, d)$ and the fourth component determined by $\alpha$. A shape of $L_a$ is described by using claspers since the commutators in $\alpha$ correspond to simple tree claspers through surgeries, see Figure \ref{surg-rep}, where $a$, $b$ and $c$ are meridians and the dotted arcs belong to the fourth component in $\mathcal{G}(L_3)$. We can check that by straightforward computations using the relations in Remark \ref{red-rel}. The shape of $L_a$ is as in Figure \ref{Levine-shape}, where we depict $C_3$-trees separately. In the figures, the basepoint $p$ of $\mathcal{G}(L_3)$ is taken above the paper. Let $x$, $y$ and $z$ be meridians of the first, second and third components respectively. The meridians are taken as the loops each of which starts $p$, goes along a straight line connecting $p$ and the segment marked by the arrow to a point near the component, goes around the component with the linking number $+1$ and goes back to $p$ along the straight line. The fourth component $K_4$ is a loop which starts at $p$. However, for simplicity, $K_4$ is moved in the paper by an ambient isotopy. The starting point of $K_4$ is at the arrow mark. 

\begin{figure}[ht]
$$
\raisebox{0 pt}{\begin{overpic}[height=100pt]{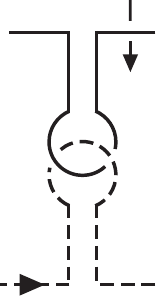}
\put(42,7){\footnotesize$a^{-1}$}\put(36,94){\footnotesize$a$}
\end{overpic}}
\hspace{1.0cm}
\raisebox{0 pt}{\begin{overpic}[height=100pt]{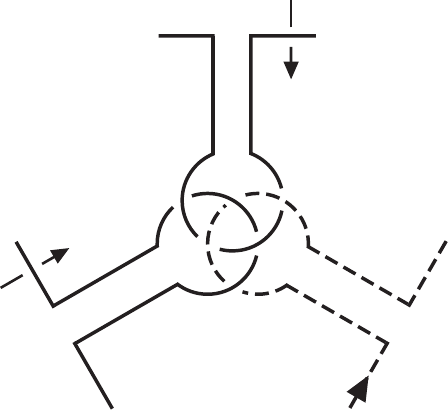}
\put(95,48){\footnotesize$[a,b]$}\put(63,95){\footnotesize$a$}\put(2,22){\footnotesize$b$}
\end{overpic}}
\hspace{1.0cm}
\raisebox{0 pt}{\begin{overpic}[height=100pt]{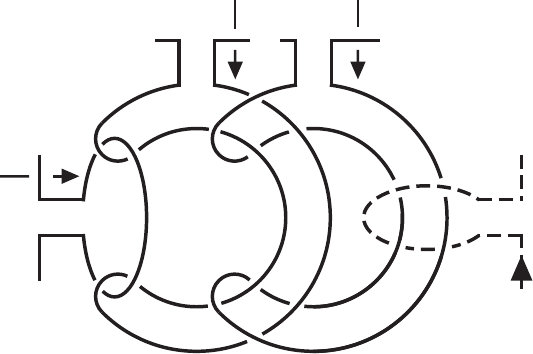}
\put(128,62){\footnotesize$[a,[b,c]]^{-1}$}\put(93,95){\footnotesize$a$}
\put(59,95){\footnotesize$b$}\put(0,42){\footnotesize$c$}
\end{overpic}}
$$\\
\caption{The elements in $\mathcal{G}(L_3)$ corresponding to simple tree claspers.} \label{surg-rep}
\end{figure}

\begin{figure}[ht]
$$
\raisebox{0 pt}{\begin{overpic}[width=130pt]{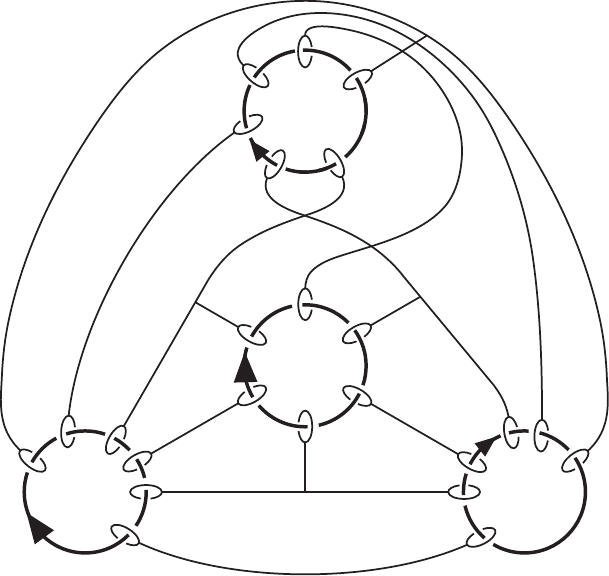}
\put(0,2){\footnotesize \textbf{$1$}}
\put(127,5){\footnotesize \textbf{$2$}}
\put(82,42){\footnotesize \textbf{$3$}}
\put(80,95){\footnotesize \textbf{$4$}}
\put(76,24){\scriptsize$-l$}
\put(35,35){\scriptsize$-r$}
\put(58,-7){\scriptsize$-k$}
\put(14,73){\scriptsize$-e_1$}
\put(97,68){\scriptsize$-e_2$}
\put(81,83){\scriptsize$-e_3$}
\put(92,57){\scriptsize$-e_6$}
\put(32,57){\scriptsize$e_5$}
\put(90,117){\scriptsize$-e_4$}
\put(62,9){\scriptsize$d$}
\end{overpic}}
\qquad
\raisebox{5 pt}{\begin{overpic}[width=120pt]{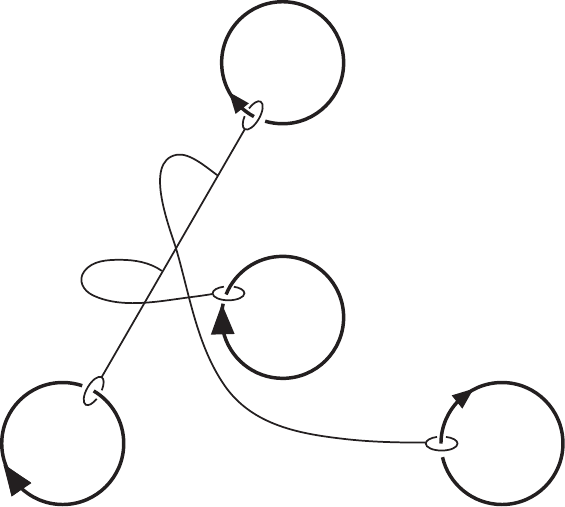}
\put(-6,-3){\footnotesize \textbf{$1$}}
\put(122,0){\footnotesize \textbf{$2$}}
\put(77,37){\footnotesize \textbf{$3$}}
\put(74,100){\footnotesize \textbf{$4$}}
\put(45,59){\footnotesize$e_7$}
\end{overpic}}
\qquad
\raisebox{5 pt}{\begin{overpic}[width=120pt]{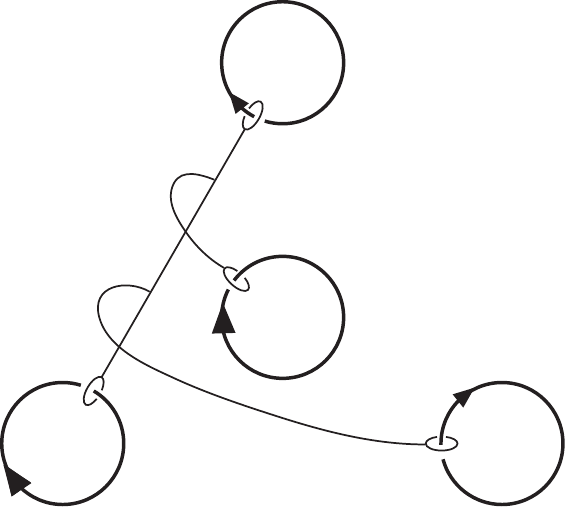}
\put(-6,-3){\footnotesize \textbf{$1$}}
\put(122,0){\footnotesize \textbf{$2$}}
\put(77,37){\footnotesize \textbf{$3$}}
\put(74,100){\footnotesize \textbf{$4$}}
\put(25,54){\footnotesize$e_8$}
\end{overpic}}
$$
\caption{A shape of $L_a$.} \label{Levine-shape}
\end{figure}

Let $L_b=L_b(c_1,\dots,c_6, f_1,\dots,f_4,t_1,t_2)$ be a 4-component link in the standard form in Figure \ref{canon-form}. The difference between Figure \ref{Levine-shape} and Figure \ref{canon-form} is the positions of the leaves of the claspers and the shapes of $C_3$-trees. By using Lemmas \ref{leaf-exchange}, \ref{IHX} and other lemmas, it is shown that
\begin{eqnarray}
& &L_a(k,l,r,d,e_1,\dots, e_8)  \nonumber \\ 
&\overset{\mbox{\scriptsize l.h.}}{\sim}&L_b(-l,-r,-k,-e_1,-e_2,-e_3,-e_6,e_5,-e_4,d,e_7+e_8+e_3e_4, -e_7). \nonumber
\end{eqnarray}
By using $[\psi_{12},\psi_{41}]$ (see Table \ref{tablecase} below), we can simplify the eleventh argument of $L_b$ up to link-homotopy, 
$$L_b(\cdots ,e_7+e_8+e_3e_4, \cdots)\overset{\mbox{\scriptsize l.h.}}{\sim}L_b(\cdots ,e_7+e_8, \cdots).$$
From the observation, we define a map $\mathcal{F}:M\to N$ by 
$$(k,l,r,d,e_1,\dots, e_8)\mapsto (-l,-r,-k,-e_1,-e_2,-e_3,-e_6,e_5,-e_4,d,e_7+e_8, -e_7).$$
Then, $L_a(T)\overset{\mbox{\scriptsize l.h.}}{\sim}L_b(\mathcal{F}(T))$ for any $T\in M$.
\if0
Therefore, the following holds up to link-homotopy.
\begin{eqnarray*}
& &L_a(k,l,r,d,e_1,\dots, e_8) \label{eq01}\\ 
&\overset{\mbox{\scriptsize l.h.}}{\sim}&L_b(-l,-r,-k,-e_1,-e_2,-e_3,-e_6,e_5,-e_4,d,e_7+e_8, -e_7). \nonumber 
\end{eqnarray*}
\fi
We show that $\mathcal{F}$ is well-defined. For $T=(k,l,r,d,e_1,\dots, e_8) \in M$, put $\mathcal{F}(T)=(c_1,\dots,c_6,f_1,\cdots,f_4,t_1,t_2) \in N$. Then the following relations hold.  
\begin{equation*} k= -c_3,\, l= -c_1,\, r= -c_2,\, e_1= -c_4,\, e_2= -c_5,\, e_3= -c_6, \label{eq02}
\end{equation*}
$$d= f_4,\, e_4= -f_3,\, e_5= f_2,\, e_6= -f_1,\, e_7= -t_2\ \mbox{ and } e_8= t_1+t_2.$$
Under these relations, we can check that the relations $\psi_{ij}$ in Table \ref{originaltable} for $L_b$ generate the relations $\Phi_{i}$ in Table \ref{L-table} for $L_a$ as follows.
$$\Phi_1=\psi_{14},\, \Phi_2=(\psi_{14}\psi_{34})^{-1},\, \Phi_3=\psi_{12}\psi_{32},\,\Phi_4=\psi_{41}^{-1},$$
$$\Phi_5=\psi_{43}^{-1}\,\mbox{ and }\Phi_6(a,b,c)=\psi_{21}^c\psi_{12}^b\{(\psi_{23}\psi_{43})^{-1}\}^a.$$
Thus, if $T$, $T' \in M$ are related by $\Phi_i$, then $\mathcal{F}(T)$, $\mathcal{F}(T')\in N$ are related by $\psi_{ij}$.  
This induces well-definedness of $\mathcal{F}$. The map $\mathcal{F}$ is obviously surjective.
\par
We show that $\mathcal{F}$ is injective. 
Assume $\mathcal{F}(T)=\mathcal{F}(T')$ in $N$ for $T$, $T'\in M$, i.e. $\mathcal{F}(T)$ and $\mathcal{F}(T')$ are related by $\psi_{ij}$ in Table \ref{originaltable}, then $L_b(\mathcal{F}(T)) \overset{\mbox{\scriptsize l.h.}}{\sim} L_b(\mathcal{F}(T'))$. Since $L_a(T) \overset{\mbox{\scriptsize l.h.}}{\sim} L_b(\mathcal{F}(T))$ and $L_a(T') \overset{\mbox{\scriptsize l.h.}}{\sim} L_b(\mathcal{F}(T'))$, $L_a(T) \overset{\mbox{\scriptsize l.h.}}{\sim} L_a(T')$ holds. From Theorem \ref{Levine}, $T=T'$ in $M$. Thus $\mathcal{F}$ is injective. 
\par 
Combining with Theorem \ref{Levine}, the bijectivity of $\mathcal{F}$ induces that there is a bijection between $\mathcal{L}_4$ and $N$. 
\end{proof}

\begin{remark}
A map $(c_1,\dots,c_6,f_1,\dots,f_4,t_1,t_2)\mapsto L_b(c_1,\dots,c_6,f_1,\dots,f_4,t_1,t_2)$ gives a bijection from $N$ to $\mathcal{L}_4$.
\end{remark}


In Theorem \ref{Levine}, the integers $k, l, r$ and $d$ $(0\leq d < \gcd^{\ast} (k,l,r))$ are fixed and so is the shape of first 3-component sublink $L_3$ of a corresponding 4-component link $L_a$. Meanwhile, in Theorem \ref{mainthm}, we weaken the condition. Then the shape of $L_3$ can change. That allows us more flexible treatment of the link-homotopy classes $\mathcal{L}_4$ of 4-component links which has higher symmetry with respect to the components, see Table \ref{originaltable}. In Section \ref{classif}, we apply this result to classify subsets of $\mathcal{L}_4$.
\par
We also remark the relations between the numbers $c_i$, $f_i$ and $t_i$ of claspers and the Milnor homotopy invariants. Remark \ref{intro-remark} gives the relation between the 12-tuples $k, l, r, d$ and $e_i$ and the Milnor homotopy invariants $\overline{\mu}_L(I)$ for sequences $I$ of a 4-component link $L$. 
Moreover, as mentioned in \cite{L}, from the relations of the Milnor homotopy invariants \cite{Mil02}, all the other Milnor homotopy invariants for $L$ are calculated from $k, l, r, d$ and $e_i$. 
Then with the relations in Remark \ref{intro-remark2}, we can calculate all the Milnor homotopy invariants through the numbers  $c_i$, $f_i$ and $t_i$ of claspers. Thus the procedure in Section \ref{clasp} gives the schematic way to calculate the Milnor homotopy invariants for 4-component links. 

\section{Classification and algorithm} \label{classif}

\subsection{Invariants} \label{invariants}
\par
In this subsection, we give several new subsets of $\mathcal{L}_4$ which are classified by comparable invariants. Before that, we review Levine's results. 

\begin{theorem}[\cite{L}] \label{Levintheorem} 
The subsets of $\mathcal{L}_4$ satisfying the following conditions are classified by comparable invariants. 
\begin{enumerate}
\item $k=l=r=e_1=e_2=e_3=0$ $($i.e. all linking numbers vanish.$)$ \\
\item $k=l=r=e_1=e_2=0$ and $e_3\neq 0$ $($i.e. all but one of linking numbers vanish.$)$ \\
\item $l=r=e_1=e_2=0$ and $k, e_3 \neq 0$ $($i.e. all but two of linking numbers vanish and the two linking numbers correspond to opposite sides in the tetrahedron.$)$\\
\item $e_1$, $e_2$ and $e_3$ are pairwise relatively prime $($i.e. three linking numbers corresponding to three edges contacting a vertex in the tetrahedron are pairwise relatively prime.$)$ \\
\item $k$, $l$ and $r$ are pairwise relatively prime $($i.e. three linking numbers corresponding to three edges bounding a face in the tetrahedron are pairwise relatively prime.$)$
\end{enumerate}
\end{theorem}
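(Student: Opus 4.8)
\emph{Proof strategy.} The plan is to deduce Theorem~\ref{Levintheorem} directly from the clasper classification of Theorem~\ref{mainthm}, where all five conditions become conditions on the linking numbers alone. Via the bijection $\mathcal{L}_4\cong N$ and the dictionary of Remark~\ref{intro-remark2}, each of the five conditions is a condition purely on the six integers $c_1,\dots,c_6$, because $k=-c_3$, $l=-c_1$, $r=-c_2$, $e_1=-c_4$, $e_2=-c_5$, $e_3=-c_6$; so it cuts out a subset $N_P\subseteq N$. The key observation is that every relation $\psi_{ij}$ in Table~\ref{originaltable} fixes $c_1,\dots,c_6$, hence these six integers are link-homotopy invariants, $N_P$ is a union of $\langle\psi_{ij}\rangle$-orbits, and classifying $N_P$ reduces to describing the orbits of the induced affine action of $\langle\psi_{ij}\rangle$ on the six remaining coordinates $(f_1,\dots,f_4,t_1,t_2)\in\mathbb{Z}^6$. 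Reading Table~\ref{originaltable}, this action is solvable of a very simple shape: the $f$-coordinates are translated by the lattice spanned by the $c$-vectors of the table, while each $\psi_{ij}$ shears $(t_1,t_2)$ by a current value of one of the $f_m$. So the first step is to fix the $c_i$ subject to the relevant condition $P$, specialise the eight rows of Table~\ref{originaltable}, and read off a complete set of numerical invariants of the orbit space.

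Next I would run the five cases. In case (1) all $c_i$ vanish, so every $\psi_{ij}$ fixes $f_1,\dots,f_4$ and merely adds one of $\pm f_1,\dots,\pm f_4$ to $t_1$ or $t_2$; these shears commute, so the orbit of $(t_1,t_2)$ is a coset of $(g\mathbb{Z})^2$ with $g:=\gcd(f_1,f_2,f_3,f_4)$, and a complete invariant is $(f_1,f_2,f_3,f_4)\in\mathbb{Z}^4$ together with $(t_1,t_2)$ modulo $g$ (reading $\mathbb{Z}/0\mathbb{Z}=\mathbb{Z}$ when all $f_i=0$) --- in Milnor's language, the four triple linking numbers and two length-four invariants. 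In case (2) only $c_6\neq0$: inspection of Table~\ref{originaltable} shows $f_3,f_4$ stay fixed, $\psi_{32}$ and $\psi_{41}$ move $f_1$ and $f_2$ by $c_6$, and the shears together with commutators such as $[\psi_{21},\psi_{32}]$ (which translates $t_1$ by $c_6$) move $(t_1,t_2)$ modulo $\gcd(c_6,f_1,f_2,f_3,f_4)$, giving the invariants $c_6$, the residues of $f_1,f_2$ modulo $c_6$, the integers $f_3,f_4$, and $(t_1,t_2)$ modulo that gcd. Case (3) is the same bookkeeping with the two nonzero numbers $c_3,c_6$ on opposite edges of the tetrahedron: $f_1,f_2$ become free modulo $c_6$, $f_3,f_4$ free modulo $c_3$, and $(t_1,t_2)$ are read modulo $\gcd(c_3,c_6,f_1,f_2,f_3,f_4)$. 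Cases (4) and (5) are the coprime ones, and here the coprimality is exactly what is used: in case (4), for instance, the rows $\psi_{32},\psi_{43},\psi_{23}$ move $f_1$ by $c_6$ and by $c_5$, hence by $\gcd(c_5,c_6)=1$, so $f_1$ (and likewise $f_2$) becomes a freely normalisable, non-recorded coordinate; what survives is the residues of $f_3,f_4$ modulo appropriate gcds of the $c_i$ and $(t_1,t_2)$ modulo a further gcd of the $c_i$. Case (5) is the analogous (though not symmetric) computation with $(c_1,c_2,c_3)$ pairwise coprime in place of $(c_4,c_5,c_6)$.

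The hard part will be cases (4) and (5). There one must verify that the Bézout-type products of $\psi_{ij}$ needed to normalise the $f_m$ can be carried out without their side effects --- each $\psi_{ij}$ moves several coordinates at once --- obstructing the reduction, and that afterwards the residual indeterminacy of $(t_1,t_2)$ is a quotient by exactly one sublattice, not something larger. This is where I would invoke the explicit commutator identities among the $\psi_{ij}$ (of the same kind as the identity $[\psi_{12},\psi_{41}]$ already used in the proof of Theorem~\ref{mainthm}) together with the relation $\psi_{il}^{-1}=\psi_{jl}\psi_{kl}$, which reduces the eight rows of Table~\ref{originaltable} to the three ordered edges meeting a single vertex and so keeps the bookkeeping finite and manageable. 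Everything else is routine linear algebra over $\mathbb{Z}$, and as a consistency check the resulting invariant lists can be matched against Levine's through Remark~\ref{intro-remark2} and Theorem~\ref{Levine}.
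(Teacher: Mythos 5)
Your overall strategy --- pass through the bijection $\mathcal{L}_4\cong N$ of Theorem \ref{mainthm}, observe that each condition constrains only the $\psi_{ij}$-invariant integers $c_1,\dots,c_6$, and then compute the orbits of $\langle\psi_{ij}\rangle$ on $(f_1,\dots,f_4,t_1,t_2)$ --- is the right one; it is exactly how the paper handles its own Proposition \ref{case1} (the paper does not reprove Theorem \ref{Levintheorem} itself, citing \cite{L}, but records the translated complete invariant sets in Remark \ref{classrem}). Case (1) of your argument is correct and matches Remark \ref{classrem}(1).

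There is, however, a genuine gap in cases (2)--(5): your claimed complete invariant sets are too coarse. You treat the residual indeterminacy of $(t_1,t_2)$ as a coset of a lattice generated by gcd's of the $c$'s and $f$'s, but because each $\psi_{ij}$ simultaneously translates some $f_m$ by a $c$-value and shears a $t$-coordinate by the \emph{current} value of another $f_{m'}$, the orbit structure is finer, and the correct separating invariants are the quadratic combinations $\Delta=f_1f_2+c_6t_2$, $\Delta'=c_3c_6t_2+c_3f_1f_2+c_6f_3f_4$, $\theta$, $\theta'$ of Remark \ref{classrem}, not residues of $t_1,t_2$ alone. Concretely, in your case (2) take $c_6=2$, all other $c_i=0$, and $f=(1,0,0,0)$. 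Then $\gcd(c_6,f_1,\dots,f_4)=1$, so your list records nothing about $t_2$. But $t_2$ is sheared only by $-f_1$ (via $\psi_{41}$) and $-f_2$ (via $\psi_{32}$), with $f_1$ always odd and $f_2$ always even, and since only $\psi_{41}$ moves $f_2$ its net signed count in any word fixing $f_2$ must vanish; hence the total change of $t_2$ is a sum of an even number of odd terms plus even terms, i.e.\ even. So $t_2\bmod 2$ (indeed $\Delta=2t_2\in\mathbb{Z}$, since $\gcd_{\ast}(f_3,f_4)=0$) separates orbits that your invariants do not; also note that $[\psi_{41},\psi_{32}]$ is trivial, so no commutator rescues you by translating $t_2$ by $c_6$. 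The same phenomenon defeats your cases (3)--(5), where the surviving invariants mix the $f_i$ linearly with $c_j$-coefficients and cannot be decoupled into residues of individual coordinates. To repair the proof you must exhibit these combined quantities as orbit invariants and show they, together with the obvious residues, separate orbits --- which is the actual content of Levine's computation and of Remark \ref{classrem}.
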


\begin{remark} \label{classrem}
In Theorem \ref{Levintheorem}, we can represent a complete set of invariants by using our notation for each subset.
\par
In case (1), the complete set of invariants is $f_i$ $(1 \leq i \leq 4)$ and $t_j\pmod{ \gcd_{\ast} (f_1, f_2, f_3, f_4)}$ $(j=1, 2)$. 
\par
In case (2), if $c_6\neq 0$, the complete set of invariants is $c_6$, $f_1\pmod{ c_6}$, $f_2\pmod{ c_6}$, $f_3$, $f_4$, $t_1\pmod{ \gcd (c_6, f_1, f_2, f_3, f_4)}$ and $\Delta= f_1f_2+c_6t_2\pmod{ c_6 \gcd_{\ast} (f_3, f_4)}$.
\par
In case (3), if $c_3, c_6 \neq 0$, the complete set of invariants is $c_3$, $c_6$, $f_1\pmod{ c_6}$, $f_2\pmod{ c_6}$, $f_3\pmod{ c_3}$, $f_4\pmod{ c_3}$, $t_1\pmod{ \gcd (c_3, c_6, f_1, f_2, f_3, f_4)}$ and $\Delta'= c_3c_6t_2+c_3f_1f_2+c_6f_3f_4$. 
\par
In case (4), if $c_4$, $c_5$ and $c_6$ are pairwise relatively prime, the complete set of invariants is $c_1, \dots, c_6$, $f_4 \pmod{ \gcd_{\ast}(c_1, c_2, c_3)}$ and $\theta = c_4f_1+c_5f_2+c_6f_3+\alpha c_5c_6+\beta c_4c_6+\gamma c_4c_5 \pmod{\gcd_{\ast} (c_1c_4-c_2c_5, c_1c_4-c_3c_6, ac_5c_6+bc_4c_6+cc_4c_5)}$, where $(\alpha, \beta, \gamma)$ is a tuple of integers satisfying $0\leq f_4-(\alpha c_1+\beta c_2 +\gamma c_3) < \gcd^{\ast}(c_1, c_2, c_3)$ and $a$, $b$ and $c$ run over all integers satisfying $a c_1+bc_2+c c_3=0$. 
\par
In case (5), if $c_1$, $c_2$ and $c_3$ are pairwise relatively prime, the complete set of invariants is $c_1, \dots, c_6$ and $\theta' = c_1c_2f_3+c_1c_3f_2+c_2c_3f_1+c_2c_5f_4 \pmod{\gcd_{\ast} (c_1c_4-c_2c_5, c_1c_4-c_3c_6)}$.
\end{remark}

We give new classifications for some subsets of $\mathcal{L}_4$ by comparable invariants as a corollary of Theorem \ref{mainthm}. We list the changes of the numbers of the claspers by commutators of $\psi_{ij}$ in Table \ref{tablecase}. The other commutators do not change the numbers of claspers or do the same or inverse change as the one of the commutators in the table.
\begin{table}[htb] 
\begin{center}
   \caption{Changes of the numbers of the claspers by commutators.
   }\label{tablecase}
  \begin{tabular}{|c|c|c|c|c|c|c|} \hline
      & $f_1$ & $f_2$ & $f_3$ & $f_4$ & $t_1$ & $t_2$ \\ \hline 
    $[\psi_{14}, \psi_{21}]$ & 0 & $0$ & $0$ & $0$ & $0$ & $c_1$ \\ \hline 
    $[\psi_{14}, \psi_{12}]$ & 0 & $0$ & $0$ & $0$ & $c_2$ & $-c_2$ \\ \hline 
    $[\psi_{43}, \psi_{14}]$ & 0 & $0$ & $0$ & $0$ & $c_3$ & $0$ \\ \hline 
    $[\psi_{32}, \psi_{43}]$ & 0 & $0$ & $0$ & $0$ & $0$ & $c_4$ \\ \hline     
    $[\psi_{21}, \psi_{23}]$ & 0 & $0$ & $0$ & $0$ & $-c_5$ & $c_5$ \\ \hline 
    $[\psi_{12}, \psi_{41}]$ & 0 & $0$ & $0$ & $0$ & $c_6$ & $0$ \\ \hline 
  \end{tabular} \\
\end{center}
\end{table}

\begin{proposition} \label{case1}  
The subsets of $\mathcal{L}_4$ satisfying the following conditions are classified by the numbers of claspers. 
\begin{enumerate}
\item Let $c_2=c_4=c_5=c_6=0$, $c_1, c_3 \neq 0$, $f_1 \equiv 0\pmod{c_1}$ and $f_3 \equiv 0\pmod{ c_3}$.
Then a complete set of invariants is
$$c_1,\,c_3,\, f_2,\, f_4\,\, (\bmod\, \mbox{$\gcd$} (c_1, c_3)),$$ 
$$\Delta_1=c_1t_1+f_1f_4\,\, (\bmod\, c_1\mbox{$\gcd$} (c_3, f_2))$$
and 
$$\Delta_2=c_3t_2+f_3f_4\,\, (\bmod\, c_3\mbox{$\gcd$} (c_1, f_2)).$$ 
\item Let $c_2=c_4=c_5=c_6=0$, $c_1=c_3\neq 0$, $f_1\equiv 0\pmod{\gcd (c_1, f_2, f_3)}$ and $f_3\equiv 0\pmod{\gcd (c_1, f_1, f_2)}$.  
Then a complete set of invariants is
$$c_1,\, f_1\, (\bmod\, c_1),\, f_3\, (\bmod\, c_1),\, f_2,\, f_4\, (\bmod\, c_1),$$ 
$$\Delta_1=c_1t_1+f_1f_4\,\, (\bmod\, c_1 \mbox{$\gcd$} (c_1, f_1, f_2, f_3))$$
and 
$$\Delta_2=c_1t_2+f_3f_4\,\, (\bmod\, c_1 \mbox{$\gcd$} (c_1, f_1, f_2, f_3)).$$
\if0
$($3$)$ Let $c_2=c_5=c_6=0$, $c_1$, $c_3$, $c_4\neq 0$ and $c_1$, $c_3$ and $c_4$ be pairwise relatively prime.  Then a complete set of invariants is 
$$c_1,\, c_3,\, c_4,\, f_1\, (\bmod\, c_1),\, f_2\, (\bmod\, c_4),\, f_3\, (\bmod\, \gcd (c_3, c_4)),\, f_4\, (\bmod\, \gcd (c_1, c_3))$$
and
$$\Delta_3 = c_1c_4t_1+c_4f_1f_4+c_1f_2f_3\,\, (\bmod\, c_3).$$
\fi
\item Let $c_2=c_5=c_6=0$, $c_1$, $c_3$, $c_4\neq 0$ and $\gcd(c_1,c_3)=\gcd(c_3,c_4)=1$.  Then a complete set of invariants is
$$c_1,\, c_3,\, c_4,\, f_1\, (\bmod\, c_1),\, f_2\, (\bmod\, c_4),$$ 
$$\Delta_2=c_3t_2+f_3f_4\,\, (\bmod\, \mbox{$\gcd$} (c_1, c_4, c_3f_1, c_3f_2))$$
and
$$\Delta_3 = c_1c_4t_1+c_4f_1f_4+c_1f_2f_3\pmod{ c_3}.$$
\item Let $c_3=c_6=0$, $c_1, c_2, c_4, c_5 \neq 0$ and $\gcd (c_1, c_4) = \gcd (c_2, c_5) = 1$. Then a complete set of invariants is
$$c_1,\, c_2,\, c_4,\, c_5,\, f_1\,\, (\bmod\, \gcd(c_1, c_5)),\, f_2\,\, (\bmod\, \gcd(c_2, c_4)),\, f_3\,\, (\bmod\, \gcd(c_4, c_5))$$
and 
$$f_4\,\, (\bmod\, \gcd(c_1, c_2))\quad\mbox{(i.e. the Milnor homotopy invariants.)}$$ 
\item Let $c_5=c_6=0$, $c_1, c_2, c_3, c_4 \neq 0$ and $\gcd (c_1, c_4) =\gcd (c_2, c_3) =\gcd(c_2, c_4)=1$ (or $\gcd (c_1, c_3) =\gcd (c_2, c_3) =\gcd(c_2, c_4)=1$). Then a complete set of invariants is 
$$c_1,\, c_2,\, c_3,\, c_4,\, f_1\,\, (\bmod\, c_1)
\mbox{ and }
\Delta_4 = c_1c_2f_3+c_1c_3f_2+c_2c_3f_1\,\, (\bmod\, c_1c_4).$$ 
\end{enumerate}
\end{proposition}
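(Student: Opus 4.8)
The plan is to argue entirely on the combinatorial side supplied by Theorem~\ref{mainthm}: a link-homotopy class of a $4$-component link is a $12$-tuple $(c_1,\dots,c_6,f_1,\dots,f_4,t_1,t_2)$ modulo the relations $\psi_{ij}$, and since no $\psi_{ij}$ alters any $c_i$ (by Table~\ref{originaltable} only $f_1,\dots,f_4,t_1,t_2$ move), each condition in Proposition~\ref{case1} cuts out a subset that is a union of $\psi_{ij}$-orbits. First I would check that the auxiliary congruence hypotheses (such as $f_1\equiv 0\pmod{c_1}$, $f_3\equiv 0\pmod{c_3}$ in case~(1), or $f_1\equiv 0\pmod{\gcd(c_1,f_2,f_3)}$ in case~(2)) are likewise $\psi_{ij}$-invariant: once the indicated $c_i$ vanish, the only generator that moves a given $f_m$ changes it by a multiple of the surviving $c$ occurring in its column of Table~\ref{originaltable}, and the moduli built from the $f_m$ are stable for the same reason; hence the subsets are well defined. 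This reduces the proposition to computing, case by case, the orbit space of the restricted $\psi_{ij}$-action.

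For a fixed case I would substitute the vanishing $c_i$ into Table~\ref{originaltable} and Table~\ref{tablecase}; the surviving generators act on $\mathbb Z^6=\{(f_1,f_2,f_3,f_4,t_1,t_2)\}$ as affine maps whose linear part merely adds a multiple of some $f_m$ to some $t_n$ (the ``$t_n\mapsto t_n+f_m$'' entries), which is exactly why the commutators in Table~\ref{tablecase} are non-trivial. I would then separate variables into two layers. In the first layer the induced action on $(f_1,f_2,f_3,f_4)$ is by genuine translations (the $c$-dependent columns of Table~\ref{originaltable}), generating a sublattice $\Lambda\subseteq\mathbb Z^4$; evaluating $\Lambda$ with the gcd hypotheses of the case yields precisely the ``$f$-part'' of the claimed list, e.g. $f_2,f_4\pmod{\gcd(c_1,c_3)}$ in case~(1), or the full statement $f_1\pmod{\gcd(c_1,c_5)},\dots,f_4\pmod{\gcd(c_1,c_2)}$ in case~(4), which is the Milnor-invariant assertion. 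In the second layer, with the $f_m$ fixed modulo $\Lambda$, the residual freedom in $(t_1,t_2)$ is generated by (a)~the commutators of Table~\ref{tablecase}, which shift $(t_1,t_2)$ by multiples of the surviving $c_i$, and (b)~words in the $\psi_{ij}$ returning every $f_m$ to its value, which --- because of the ``$t_n\mapsto t_n+f_m$'' entries --- shift $(t_1,t_2)$ by fixed $\mathbb Z$-bilinear expressions in the $f_m$; this is the same mechanism already used in the proof of Theorem~\ref{mainthm} to absorb the $e_3e_4$ term via $[\psi_{12},\psi_{41}]$. The combinations $\Delta_1=c_1t_1+f_1f_4$, $\Delta_2=c_3t_2+f_3f_4$, $\Delta_3=c_1c_4t_1+c_4f_1f_4+c_1f_2f_3$ and $\Delta_4=c_1c_2f_3+c_1c_3f_2+c_2c_3f_1$ are precisely the corrections that cancel these bilinear shifts, so showing each listed $\Delta$ is $\psi_{ij}$-invariant on its subset, and that it changes exactly by the claimed modulus under the commutator translations, is a direct check against the two tables.

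Once a list of $\psi_{ij}$-invariants has been produced, the remaining point is completeness: two tuples on the subset agreeing in all listed invariants must be joined by a sequence of $\psi_{ij}$. I would argue constructively --- first use the $f$-changing generators to make $(f_1,\dots,f_4)$ agree exactly (possible since the $f$-invariants pin them down modulo $\Lambda$), then use the $t$-translations from (a) and (b) to make $(t_1,t_2)$ agree (possible because, once the $f$'s coincide, the difference of the two $\Delta$'s vanishes and the relevant $\Delta$-modulus is exactly the span of the available $t$-translations). The gcd hypotheses of each case are what make the $f$-matching step unobstructed and give the $t$-lattice its stated index; in case~(4) the coprimalities $\gcd(c_1,c_4)=\gcd(c_2,c_5)=1$ leave no $t$-freedom beyond the $f$-congruences, so the classification collapses to the Milnor homotopy invariants, while case~(5) with its two alternative coprimality hypotheses needs only a short extra check that both lead to the same $\Delta_4$.

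The main obstacle I anticipate is this second layer: bookkeeping the $\mathbb Z$-bilinear contributions to $(t_1,t_2)$ from long words in the $\psi_{ij}$ and pinning down the exact sublattice of $\mathbb Z^2$ (or, after eliminating one $t$, of $\mathbb Z$) that they span, including the precise moduli $c_1\gcd(c_3,f_2)$, $c_3\gcd(c_1,f_2)$, $c_3$, $c_1c_4$ and the rest. In the mixed cases~(1)--(3) this is genuinely case-dependent, because the extra congruence hypotheses on the $f_m$ change which words return the $f$'s to their starting values and hence which bilinear shifts are available. By contrast, verifying invariance of each $\Delta$ and carrying out the $f$-matching step are routine computations with lattices over $\mathbb Z$ once the restricted tables have been written out.
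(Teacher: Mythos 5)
Your proposal is correct and follows essentially the same route as the paper: the paper's proof is exactly the constructive orbit-matching you describe, first normalizing $(f_1,\dots,f_4)$ by the $c$-translations of Table~\ref{originaltable}, then using the fact that the listed $\Delta$'s are $\psi_{ij}$-invariant to conclude that $t_1,t_2$ agree modulo the lattice spanned by the commutators of Table~\ref{tablecase} and the $f$-preserving words (whose ``$t_n\mapsto t_n+f_m$'' shifts supply the bilinear corrections in the $\Delta$'s). What remains in your plan is only the case-by-case bookkeeping that the paper carries out explicitly.
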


\begin{remark} 
We also give complete sets of invariants for other cases obtained by using the symmetry of the tetrahedron and Lemma \ref{IHX} if necessary.
\end{remark}

\begin{proof}
For each case, let $L$ and $L'$ be 4-component links with the assumptions and the same values of these invariants. We then give a geometric transformation between $L$ and $L'$ by using the moves $\psi_{ij}$. 
\par
(1) By $\psi_{34}$ and $\psi_{14}$, we transform $f_1$ and $f_3$ of both $L$ and $L'$ into 0, and by $\psi_{21}$ and $\psi_{23}$, we transform $f_4$ of both into an integer $\bar{f}_4$ ($0 \leq \bar{f}_4 < \gcd (c_1, c_3)$). 
Then $\Delta_1=c_1t_1 \pmod{c_1 \gcd (c_3, f_2)}$ and $\Delta_2 =c_3t_2 \pmod{c_3 \gcd (c_1, f_2)}$ of both are the same, because $\Delta_1$ and $\Delta_2$ are invariants under $\psi_{ij}$. 
Therefore $t_1\pmod{ \gcd(c_3, f_2)}$ and $t_2\pmod{ \gcd (c_1, f_2)}$ of both are the same.
By the 3rd line of Table \ref{tablecase} and $\psi_{12}$, we can transform $t_1$ of $L$ into that of $L'$ without affecting the other elements.
Similarly, by the 1st line of Table \ref{tablecase} and $\psi_{32}$, we can transform $t_2$ of $L$ into that of $L'$ without affecting the other elements. 
\par
(2) By $\psi_{34}$, $\psi_{14}$ and $\psi_{21}$, we transform $f_1$, $f_3$ and $f_4$ of both into integers $\bar{f}_1$, $\bar{f}_3$ and $\bar{f}_4$ ($0 \leq \bar{f}_1, \bar{f}_3, \bar{f}_4 < c_1$) respectively.
Then $\Delta_1 =c_1t_1+\bar{f}_1\bar{f}_4$ and $\Delta_2=c_1t_2+\bar{f}_3\bar{f}_4$ of both are the same modulo ${c_1 \gcd (c_1, \bar{f}_1, f_2, \bar{f}_3)}$ respectively. 
Therefore  $t_1$ and $t_2$ of both are the same modulo ${\gcd (c_1, \bar{f}_1, f_2, \bar{f}_3)}={\gcd (c_1, f_2, \bar{f}_3)}={\gcd (c_1, \bar{f}_1, f_2)}$, respectively. 
By the 3rd line of Table \ref{tablecase}, $\psi_{12}$ and $\psi_{43}$, we can transform $t_1$ of $L$ into that of $L'$ without affecting the other elements.
Similarly, by the 1st line of Table \ref{tablecase}, $\psi_{41}$ and $\psi_{32}$, we can transform $t_2$ of $L$ into that of $L'$ without affecting the other elements. 
\par
(3) By $\psi_{34}$, we transform $f_1$ of both $L$ and $L'$ into an integer $\bar{f}_1$ ($0 \leq \bar{f}_1 < c_1$), by $\psi_{43}$, $f_2$ into $\bar{f}_2$ ($0 \leq \bar{f}_2 < c_4$) and by $\psi_{12}$, $\psi_{14}$, $\psi_{21}$ and $\psi_{23}$, $f_3$ and $f_4$ into 0. 
Then, $\Delta_2= c_3t_2 \pmod{\gcd (c_1,c_4,c_3\bar{f}_1, c_3\bar{f}_2)}$ and $\Delta_3=c_1c_4t_1\pmod {c_3}$ of both are the same respectively.
By the assumption, two $t_2$ are the same modulo $\gcd (c_1,c_4,c_3\bar{f}_1, c_3\bar{f}_2)$ and two $t_1$ are the same modulo $c_3$. 
Therefore, by using the 3rd line of Table \ref{tablecase} and $\psi_{41}$, $\psi_{32}$ and the 1st and 4th lines of Table \ref{tablecase}, we can transform $t_1$ and $t_2$ of $L$ into those of $L'$ without affecting the other elements respectively. 
\par
(4) By $\psi_{23}$ and $\psi_{34}\psi_{14}$, we transform $f_1$ of both $L$ and $L'$ into an integer $\bar{f}_1$ ($0 \leq \bar{f}_1 < \gcd (c_1,c_5)$) without affecting the other elements except for $t_1$ and $t_2$.
Similarly, we transform $f_2$, $f_3$ and $f_4$ of both into integers $\bar{f}_2$ ($0 \leq \bar{f}_2 < \gcd (c_2,c_4)$), $\bar{f}_3$ ($0 \leq \bar{f}_3 < \gcd (c_4,c_5)$) and $\bar{f}_4$ ($0 \leq \bar{f}_4 < \gcd (c_1,c_2$)) respectively, without affecting the other elements except for $t_1$ and $t_2$.
Finally, by the 1st, 2nd, 4th and 5th lines of Table \ref{tablecase}, we transform $t_1$ and $t_2$ of $L$ into those of $L'$ respectively, without affecting the other elements. 
\par
(5) By $\psi_{34}$, we transform $f_1$ of both $L$ and $L'$ into $\bar{f}_1$ ($0 \leq \bar{f}_1 < c_1$)
and by $\psi_{43}$ and $\psi_{14}$, $f_2$ into $0$ without affecting $\bar{f}_1$. 
Then, $\Delta_4=c_1c_2f_3+c_2c_3\bar{f}_1$ of both are the same modulo $c_1c_4$.
Therefore these two $c_2f_3$  are the same modulo $c_4$.
By the assumption $\gcd(c_2, c_4)=1$, these two $f_3$ are the same modulo $c_4$.
Therefore we can transform $f_3$ of $L$ into that of $L'$ by using $\psi_{12}$ without affecting the other elements except for $f_4$, $t_1$ and $t_2$. 
We then transform $f_4$ of $L$ into that of $L'$ by using $\psi_{32}$ and $\psi_{23}$ without affecting the other elements except for $t_1$ and $t_2$. 
Finally, by the 1st, 2nd, 3rd and 4th lines of Table \ref{tablecase} we transform $t_1$ and $t_2$ of $L$ into those of $L'$ respectively, without affecting the other elements.
\end{proof}

The lemmas in Section \ref{clasp} allow us a schematic calculation of link-homotopy classes. 

\begin{example}
In \cite{Hu}, Hughes showed that there are two 4-component links $H_1$ and $H_2$ which have the isomorphic \textit{pre-peripheral structures} \cite{Hu} or \textit{reduced peripheral systems} \cite{AM} (this induces that they are not distinguished by the Milnor homotopy invariants), but they are not link-homotopic.  We show that $H_1$ and $H_2$ are not link-homotopic through the technique in Section \ref{clasp}.
\par
The shapes of $H_1$ and $H_2$ are the closures of the string links which are depicted in the appendix of \cite{AM}. Through the technique in Section \ref{clasp}, we have clasper presentations of the string links corresponding to $H_1$ and $H_2$ as in Figure \ref{Hughes-example}, where the left-most integers are the orders of the components and the claspers wihtout an integer are single claspers.

\begin{figure}[ht] 
$$H_1: 
\hspace{0.5cm}
\raisebox{-28 pt}{\begin{overpic}[width=120pt]{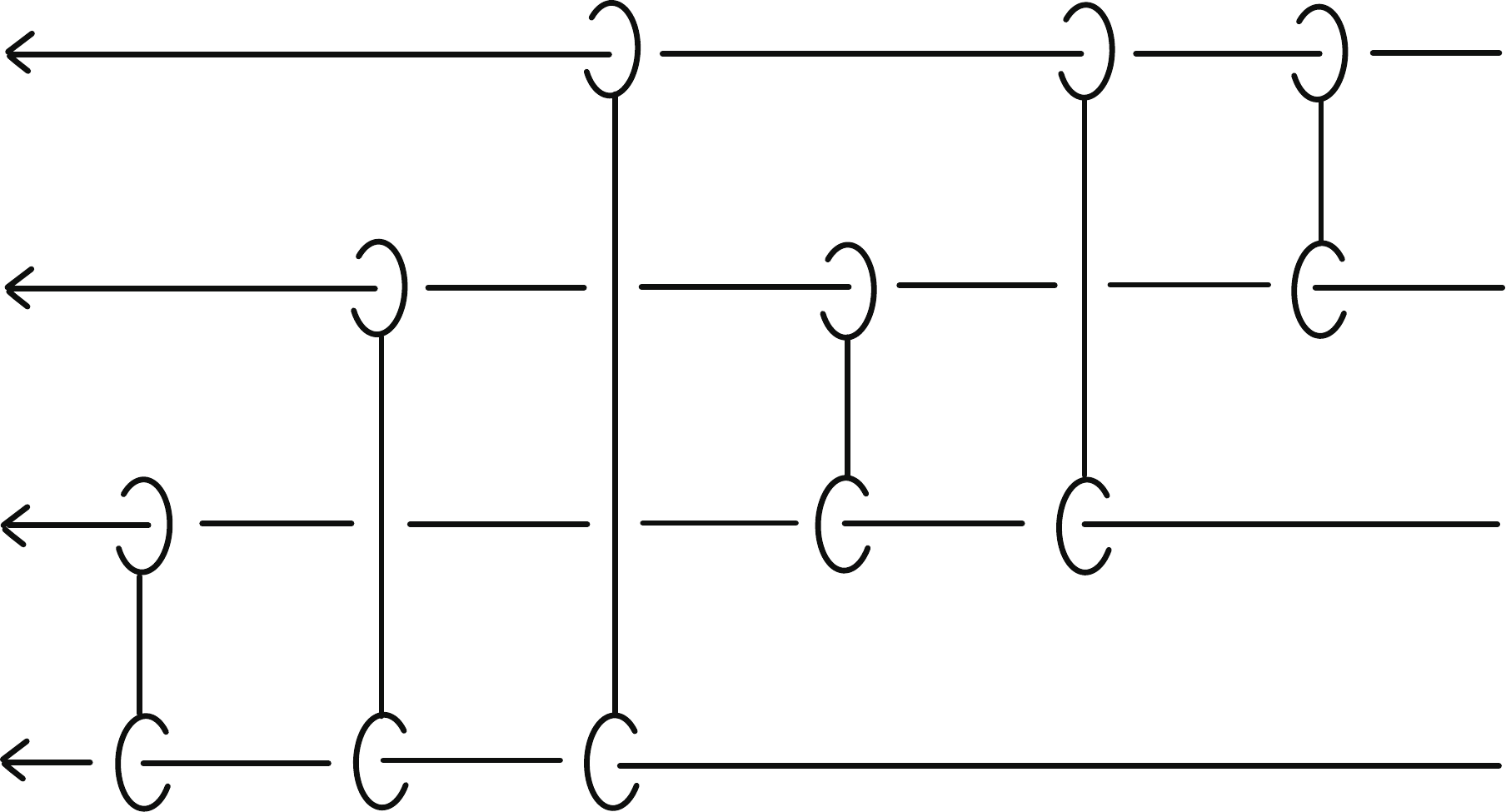}
\put(-8,57){\footnotesize \textbf{$1$}}
\put(-8,38){\footnotesize \textbf{$2$}}
\put(-8,19){\footnotesize \textbf{$3$}}
\put(-8,0){\footnotesize \textbf{$4$}}
\put(90,48){\footnotesize $4$}
\put(109,48){\footnotesize $4$}
\end{overpic}}
\hspace{1.0cm}
H_2: 
\hspace{0.5cm}
\raisebox{-28 pt}{\begin{overpic}[width=140pt]{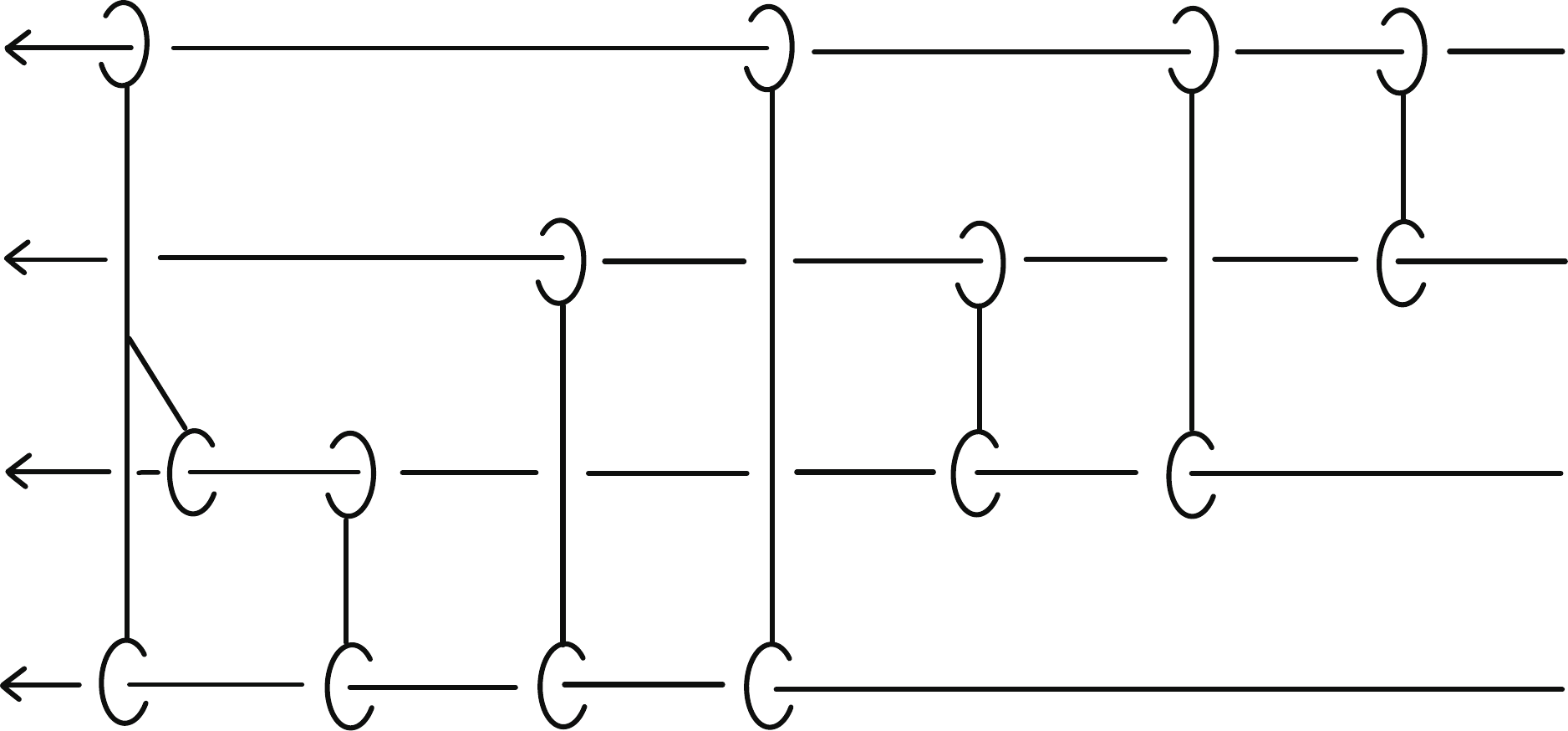}
\put(-8,57){\footnotesize \textbf{$1$}}
\put(-8,38){\footnotesize \textbf{$2$}}
\put(-8,19){\footnotesize \textbf{$3$}}
\put(-8,0){\footnotesize \textbf{$4$}}
\put(15,48){\footnotesize \textbf{$4$}}
\put(110,48){\footnotesize \textbf{$4$}}
\put(129,48){\footnotesize \textbf{$4$}}
\end{overpic}}
$$
\caption{Clasper presentations of the string links corresponding to $H_1$ and $H_2$.} \label{Hughes-example}
\end{figure}
 The both links $H_1$ and $H_2$ have the same linking numbers $-c_1=1$, $-c_2=4$, $-c_3=4$, $-c_4=1$, $-c_5=1$ and $-c_6=1$. From Table \ref{originaltable}, the linking numbers and $\tilde{\theta} = f_1+f_2+f_3+f_4$ ($\bmod$ $3$) gives a complete set of invariants. Here we ignore the $t_1$ and $t_2$ because they vanish from the commutators in Table \ref{tablecase}. (Note that the links $H_1$ and $H_2$ are in the case Theorem \ref{Levintheorem} (4) and from Remark \ref{classrem} (4), we have the invariant $\theta=-\tilde{\theta}$ by $\alpha = -f_4$, $\beta=\gamma=0$.)
\par
From the difference of the numbers of the $C_2$-trees in Figure \ref{Hughes-example}, 
we have $\theta(H_1)-\theta(H_2) \equiv 1\pmod 3$ and $H_1$ and $H_2$ are not link-homotopic. In fact, by using the lemmas in Section \ref{clasp}, we can transform $H_1$ and $H_2$ to the standard form in Figure \ref{canon-form}. One standard form for each of them is $f_1=1$, $f_2=4$, $f_3=4$ and $f_4=16$ for $H_1$ and $f_1=1$, $f_2=8$, $f_3=4$ and $f_4=16$ for $H_2$ respectively. Then $\theta (H_1)=1$ and $\theta (H_2)=2$. 
\end{example}

\subsection{An algorithm} \label{algorithm}
Habegger and Lin \cite{HL} showed that there is an algorithm which determines whether given two links are link-homotopic or not. We translate the algorithm into our notation for the 4-component case. 
Consider the two links $L_1$ and $L_2$ which are represented by the integers as 
$$L_1: c^1_1, c^1_2, c^1_3, c^1_4, c^1_5, c^1_6 \mid f^1_1, f^1_2, f^1_3, f^1_4 \mid t^1_1, t^1_2$$ 
and
$$L_2: c^2_1, c^2_2, c^2_3, c^2_4, c^2_5, c^2_6 \mid f^2_1, f^2_2, f^2_3, f^2_4 \mid t^2_1, t^2_2.$$ 


\begin{enumerate}
\renewcommand{\labelenumi}{\arabic{enumi}.}
\item Check $c^1_i=c^2_i$ ($1\leq i \leq 6$). If they all hold, then go to Step 2, otherwise $L_1$ and $L_2$ are not link-homotopic. 
\item Let $\Psi_1=\psi_{14}^{a_8}\psi_{34}^{a_7}\psi_{23}^{a_6}\psi_{43}^{a_5}\psi_{32}^{a_4}\psi_{12}^{a_3}\psi_{41}^{a_2}
\psi_{21}^{a_1}$ $(a_i\in \mathbb{Z})$ be the composition of $\psi_{ij}$ in Table \ref{originaltable}. Find a tuple $(a_1, \dots, a_8)$ which satisfies $\Psi_1(f^1_i)=f^2_i$ ($1\leq i \leq 4$). If there is such a tuple, then replace $L_1$ by 
$$\Psi_1(L_1): c^2_1, c^2_2, c^2_3, c^2_4, c^2_5, c^2_6 \mid f^2_1, f^2_2, f^2_3, f^2_4 \mid \Psi_1(t^1_1), \Psi_1(t^1_2), $$
and go to Step 3, otherwise $L_1$ and $L_2$ are not link-homotopic. 
\item Let $\mathcal{S}$ be the set of the compositions of the relations $\psi_{ij}$ which do not change $f_i$ $(1\leq i \leq 4)$. Find a relation $\Psi_2\in\mathcal{S}$ which satisfies $\Psi_2\Psi_1(t^1_i)=t^2_i$ $(i=1,2)$. 
If there is such a relation, then $L_2=\Psi_2\Psi_1(L_1)$ and $L_1$ and $L_2$ are link-homotopic, otherwise they are not.
\end{enumerate}
\begin{remark}
From Table \ref{tablecase}, the commutators $[\psi_{ij}, \psi_{kl}]$ of $\psi_{ij}$ are in $\mathcal{S}$. 
Thus, in Step 2, we only need to consider the form $\psi_{14}^{a_8}\psi_{34}^{a_7}\psi_{23}^{a_6}\psi_{43}^{a_5}\psi_{32}^{a_4}\psi_{12}^{a_3}\psi_{41}^{a_2}
\psi_{21}^{a_1}$ up to $\mathcal{S}$. 
Moreover, in Step 3, to find generators of $\mathcal{S}$, we only need to consider the form $\psi_{14}^{b_8}\psi_{34}^{b_7}\psi_{23}^{b_6}\psi_{43}^{b_5}\psi_{32}^{b_4}\psi_{12}^{b_3}\psi_{41}^{b_2}
\psi_{21}^{b_1}$ other than commutators $[\psi_{ij}, \psi_{kl}]$ (see below). 

\if0
\par
Assume two 4-component links $L_1$ and $L_2$ are link-homotopic. From Theorem \ref{mainthm}, there is a composition $\Psi =\psi_{i_sj_s}\cdots\psi_{i_1j_1}$ of relations $\psi_{ij}$ in Table \ref{originaltable} which satisfies $\Psi(L_1)=L_2$. 
In Step 2, we obtain a composition $\Psi_1=\psi_{14}^{a_8}\psi_{34}^{a_7}\psi_{23}^{a_6}\psi_{43}^{a_5}\psi_{32}^{a_4}\psi_{12}^{a_3}\psi_{41}^{a_2}
\psi_{21}^{a_1}$ which satisfies $\Psi_1(f^1_i)=f^2_i$ ($1\leq i \leq 4$). 
Then $\Psi \Psi^{-1}_1\in \mathcal{S}$ and $\Psi \Psi^{-1}_1(\Psi_1(t^1_i))=t^2_i$ ($i=1,2$). In Step 3, we obtain equivalent relation $\Psi_2\in \mathcal{S}$ to $\Psi \Psi^{-1}_1$.  
\fi
\end{remark}
\par
In Step 2, we can have the tuple $(a_1, \dots, a_8)$ by solving the system of linear Diophantine equations  $A{\bm a}={\bm f}$, where
$$A= \left(\begin{array}{cccccccc}
0 & 0 & 0 & c_6 & c_5 & -c_5 & -c_1 & 0 \\
0 & c_6 & 0 & 0 & -c_4 & 0 & c_2 & -c_2 \\
c_5 & -c_5 & -c_4 & 0 & 0 & 0 & 0 & c_3 \\
-c_1 & 0 & c_2 & -c_2 & 0 & c_3 & 0 & 0 \\
\end{array}
\right), 
{\bm a}=\left(\begin{array}{c}
a_1 \\
a_2 \\
\vdots \\
a_8 \\
\end{array}
\right), 
{\bm f}=\left(\begin{array}{c}
f^2_1-f^1_1 \\
f^2_2-f^1_2 \\
f^2_3-f^1_3 \\
f^2_4-f^1_4 \\
\end{array}
\right).
$$
\par
In Step 3, the commutators $[\psi_{ij}, \psi_{kl}]$ of $\psi_{ij}$ are in $\mathcal{S}$. 
The set $\mathcal{S}$ is generated by $[\psi_{ij}, \psi_{kl}]$ and the forms  $\Psi=\psi_{14}^{b_8}\psi_{34}^{b_7}\psi_{23}^{b_6}\psi_{43}^{b_5}\psi_{32}^{b_4}\psi_{12}^{b_3}\psi_{41}^{b_2}
\psi_{21}^{b_1}$ $(b_i\in \mathbb{Z})$ which satisfy $\Psi(f_i)=f_i$ $(1\leq i \leq 4)$. 
We can obtain the generators of $\mathcal{S}$ which have the form $\Psi=\psi_{14}^{b_8}\psi_{34}^{b_7}\psi_{23}^{b_6}\psi_{43}^{b_5}\psi_{32}^{b_4}\psi_{12}^{b_3}\psi_{41}^{b_2}
\psi_{21}^{b_1}$ by solving the system of linear Diophantine equations  $A{\bm b}={\bm 0}$, where 
$${\bm b}= \left(\begin{array}{c}
b_1 \\
b_2 \\
\vdots \\
b_8 \\
\end{array}
\right).$$
This system is the associated homogeneous system of $A{\bm a}={\bm f}$. Let $\overline{\Psi}_1, \dots , \overline{\Psi}_s$ be the generators of $\mathcal{S}$ corresponding to the generators of the solutions of $A{\bm b}={\bm 0}$. 
Note that $[\psi_{ij}, [\psi_{kl}, \psi_{mn}]]=0$
and $[[\psi_{i_1j_1}, \psi_{k_1l_1}], [\psi_{i_2j_2}, \psi_{k_2l_2}]]=0$ hold for the relations in Table \ref{originaltable}.
Then, a relation $\Psi_2$ in $\mathcal{S}$ can be represented as 
$$\Psi_2=\prod [\psi_{ij}, \psi_{kl}]^{d_{ijkl}} \cdot \overline{\Psi}^{d_s}_s \cdots \overline{\Psi}^{d_1}_1,$$
where the product runs over all commutators in Table \ref{tablecase} and $d_i, d_{ijkl} \in \mathbb{Z}$. 
Moreover, the form $\Psi_2$ with any integers $d_i$ and $d_{ijkl}$ is in $\mathcal{S}$. 
Finally, we can find $\Psi_2 \in \mathcal{S}$ which satisfies $\Psi_2\Psi_1(t^1_i)=t^2_i$ $(i=1,2)$ by solving the system of linear Diophantine equations corresponding to $\Psi_2\Psi_1(t^1_i)=t^2_i$.

\begin{example} \label{algorithm-example1}
We show the following two links are not link-homotopic by the algorithm:
$$L_1: 1,2,2,4,2,2\mid 0,0,0,0 \mid 0,0 \quad \mbox{ and }\quad L_2: 1,2,2,4,2,2\mid -2,0,2,1\mid 1,1.$$
\par
Remark that they are not in the subsets of $\mathcal{L}_4$ which are classified by invariants in Subsection \ref{invariants}. Moreover, they have the same Milnor homotopy invariants. Indeed, for the length 2, they have the same linking numbers ($-c_i$). For the length 3, $f_i$ modulo the greatest common divisor of $c_i$ in the same column in Table \ref{originaltable} is equal to the Milnor homotopy invariant with corresponding length 3 sequence $I$ or it with minus sign (see Remark \ref{intro-remark} and \ref{intro-remark2}) and $L_1$ and $L_2$ have the same value. For the length 4, since they have the linking number $-c_1=-1$ and $\Delta_{L_i}(I)=1$ for any length 4 sequence $I$, all Milnor homotopy invariants vanish. 
\par
We apply the algorithm to the links $L_1$ and $L_2$. 
The integers $c_i$ $(1\leq i \leq 6)$ of both links are equal. We solve $A{\bm a}={\bm f}$, where
$$A= \left(\begin{array}{cccccccc}
0 & 0 & 0 & 2 & 2 & -2 & -1 & 0 \\
0 & 2 & 0 & 0 & -4 & 0 & 2 & -2 \\
2 & -2 & -4 & 0 & 0 & 0 & 0 & 2 \\
-1 & 0 & 2 & -2 & 0 & 2 & 0 & 0 \\
\end{array}
\right), 
{\bm a}=\left(\begin{array}{c}
a_1 \\
a_2 \\
\vdots \\
a_8 \\
\end{array}
\right), 
{\bm f}=\left(\begin{array}{c}
-2 \\
0 \\
2 \\
1 \\
\end{array}
\right).
$$
Transform $A$ to the Smith normal form. If  we put 
$P=\left(\begin{array}{cccc}
-1 & 0 & 0 & 0 \\
0 & 0 & 0 & -1 \\
0 & 0 & -1 & -2 \\
2 & 1 & 1 & 2 \\
\end{array}
\right)$ and $Q=\left(\begin{array}{cccccccc}
0 & 1 & 0 & -2 & 0 & 2 & 2 & 0 \\
0 & 0 & 1 & -2 & 0 & 2 & 0 & 1 \\
0 & 0 & 0 & 0 & 0 & 0 & 1 & 0 \\
0 & 0 & 0 & 1 & 0 & 0 & 0 & 0 \\
0 & 0 & 0 & 0 & 1 & 0 & 0 & 0 \\
0 & 0 & 0 & 0 & 0 & 1 & 0 & 0 \\
1 & 0 & 0 & 2 & 2 & -2 & 0 & 0 \\
0 & 0 & 0 & 0 & 0 & 0 & 0 & 1 \\
\end{array}
\right)$, then $D=PAQ=\left(\begin{array}{cccccccc}
1 & 0 & 0 & 0 & 0 & 0 & 0 & 0 \\
0 & 1 & 0 & 0 & 0 & 0 & 0 & 0 \\
0 & 0 & 2 & 0 & 0 & 0 & 0 & 0 \\
0 & 0 & 0 & 0 & 0 & 0 & 0 & 0 \\
\end{array}
\right)$. 
Here $P \in GL_4(\mathbb{Z})$ and $Q \in GL_8(\mathbb{Z})$. 
\par
We have the system of linear Diophantine equations $D{\bm c}={\bm f}'$ which is equivalent to $A{\bm a}={\bm f}$, where ${\bm c}=Q^{-1}{\bm a}$ and ${\bm f}'=P{\bm f}=\left(\begin{array}{c}
2 \\
-1 \\
-4 \\
0 \\
\end{array}
\right)$. 
($A{\bm a}={\bm f} \Leftrightarrow PA{\bm a}=P{\bm f} \Leftrightarrow PAQQ^{-1}{\bm a}=P{\bm f}\Leftrightarrow D{\bm c}={\bm f}'$.)
The solutions are
$${\bm c}= \left(\begin{array}{c}
2 \\ -1 \\ -2 \\ 0 \\ 0 \\ 0 \\ 0 \\ 0 \\
\end{array}
\right)
+h_1
\left(\begin{array}{c}
0 \\ 0 \\ 0 \\ 1 \\ 0 \\ 0 \\ 0 \\ 0 \\
\end{array}
\right)
+h_2
\left(\begin{array}{c}
0 \\ 0 \\ 0 \\ 0 \\ 1 \\ 0 \\ 0 \\ 0 \\
\end{array}
\right)
+h_3
\left(\begin{array}{c}
0 \\ 0 \\ 0 \\ 0 \\ 0 \\ 1 \\ 0 \\ 0 \\
\end{array}
\right)
+h_4
\left(\begin{array}{c}
0 \\ 0 \\ 0 \\ 0 \\ 0 \\ 0 \\ 1 \\ 0 \\
\end{array}
\right)
+h_5
\left(\begin{array}{c}
0 \\ 0 \\ 0 \\ 0 \\ 0 \\ 0 \\ 0 \\ 1 \\
\end{array}
\right)
\quad(h_i \in \mathbb{Z}).$$ 
Then the solutions of $A{\bm a}={\bm f}$ are
\begin{equation*}
\begin{split}
&{\bm a}=Q{\bm c}\\
&=
\left(\begin{array}{c}
-1 \\ -2 \\ 0 \\ 0 \\ 0 \\ 0 \\ 2 \\ 0 \\
\end{array}
\right)
+h_1
\left(\begin{array}{c}
-2 \\ -2 \\ 0 \\ 1 \\ 0 \\ 0 \\ 2 \\ 0 \\
\end{array}
\right)
+h_2
\left(\begin{array}{c}
0 \\ 0 \\ 0 \\ 0 \\ 1 \\ 0 \\ 2 \\ 0 \\
\end{array}
\right)
+h_3
\left(\begin{array}{c}
2 \\ 2 \\ 0 \\ 0 \\ 0 \\ 1 \\ -2 \\ 0 \\
\end{array}
\right)
+h_4
\left(\begin{array}{c}
2 \\ 0 \\ 1 \\ 0 \\ 0 \\ 0 \\ 0 \\ 0 \\
\end{array}
\right)
+h_5
\left(\begin{array}{c}
0 \\ 1 \\ 0 \\ 0 \\ 0 \\ 0 \\ 0 \\ 1 \\
\end{array}
\right)
\,\,\,(h_i \in \mathbb{Z}).
\end{split}
\end{equation*}
Take $(a_1, \dots, a_8)=(1,0,0,0,0,1,0,0)$ $(h_3=1$ and $h_i=0$ $(i\neq 3))$, then $\Psi_1= \psi_{23}\psi_{21}$ and have
$$\Psi_1(L_1): 1,2,2,4,2,2\mid -2,0,2,1\mid 0,-2$$
The solutions of $A{\bm b}={\bm 0}$ are 
$${\bm b}=
h_1
\left(\begin{array}{c}
-2 \\ -2 \\ 0 \\ 1 \\ 0 \\ 0 \\ 2 \\ 0 \\
\end{array}
\right)
+h_2
\left(\begin{array}{c}
0 \\ 0 \\ 0 \\ 0 \\ 1 \\ 0 \\ 2 \\ 0 \\
\end{array}
\right)
+h_3
\left(\begin{array}{c}
2 \\ 2 \\ 0 \\ 0 \\ 0 \\ 1 \\ -2 \\ 0 \\
\end{array}
\right)
+h_4
\left(\begin{array}{c}
2 \\ 0 \\ 1 \\ 0 \\ 0 \\ 0 \\ 0 \\ 0 \\
\end{array}
\right)
+h_5
\left(\begin{array}{c}
0 \\ 1 \\ 0 \\ 0 \\ 0 \\ 0 \\ 0 \\ 1 \\
\end{array}
\right)
\quad(h_i \in \mathbb{Z}).$$
Then the generators of $\mathcal{S}$ in the form of  $ \psi_{14}^{b_8}\psi_{34}^{b_7}\psi_{23}^{b_6}\psi_{43}^{b_5}\psi_{32}^{b_4}\psi_{12}^{b_3}\psi_{41}^{b_2}
\psi_{21}^{b_1}$ are 
$$
\mbox{$\overline{\Psi}_1=\psi_{34}^{2}\psi_{32}\psi_{41}^{-2}
\psi_{21}^{-2}$, $\overline{\Psi}_2= \psi_{34}^{2}\psi_{43}$, 
$\overline{\Psi}_3= \psi_{34}^{-2}\psi_{23}\psi_{41}^{2}\psi_{21}^{2}$, 
$\overline{\Psi}_4= \psi_{12}\psi_{21}^2$ 
and 
$\overline{\Psi}_5= \psi_{14}\psi_{41}$.}$$

\begin{table}[htb] 
\begin{center}
   \caption{Changes of the numbers of the claspers by $\overline{\Psi}_i$.
   }\label{S-table}
  \begin{tabular}{|c|c|c|c|c|c|c|} \hline
      & $f_1$ & $f_2$ & $f_3$ & $f_4$ & $t_1$ & $t_2$ \\ \hline 
    $\overline{\Psi}_1$ & 0 & $0$ & $0$ & $0$ & $0$ & 4 \\ \hline 
    $\overline{\Psi}_2$ & 0 & $0$ & $0$ & $0$ & 0 & 0 \\ \hline 
    $\overline{\Psi}_3$ & 0 & $0$ & $0$ & $0$ & 0 & 0 \\ \hline 
    $\overline{\Psi}_4$ & 0 & $0$ & $0$ & $0$ & 0 & 0 \\ \hline     
    $\overline{\Psi}_5$ & 0 & $0$ & $0$ & $0$ & 0 & 0 \\ \hline 
  \end{tabular} \\
\end{center}
\end{table}

These $\overline{\Psi}_i$ and commutators $[\psi_{ij}, \psi_{kl}]$ in Table \ref{tablecase} generate $\mathcal{S}$. From Table \ref{tablecase} and Table \ref{S-table}, the combination of the generators can not change $t_1$ to $t_1+1$. Thus there is no relation in $\mathcal{S}$ which maps $\Psi_1(L_1)$ to $L_2$ and $L_1$ and $L_2$ are not link-homotopic.  

\end{example}

\begin{example} \label{algorithm-example2}
We show the following two links are link-homotopic by the algorithm:
$$L_1: 1,2,2,4,2,2\mid 0,0,0,0\mid 0,0\quad \mbox{and}\quad L_3: 1,2,2,4,2,2\mid -2,0,2,1\mid 2,0.$$
Similar to Example \ref{algorithm-example1}, put  $\Psi_1= \psi_{23}\psi_{21}$ and have
$$\Psi_1(L_1): 1,2,2,4,2,2\mid -2,0,2,1\mid 0,-2.$$
By putting $\Psi_2= [\psi_{43}, \psi_{14}][\psi_{14}, \psi_{21}]^{2} (\in \mathcal{S})$, then $\Psi_{2}\Psi_1(L_1)=L_3$. Thus $L_1$ and $L_3$ are link-homotopic.  
\end{example}



\end{document}